\newcommand{\labell}[1] {\label{#1}}
\numberwithin{equation}{section}
\newtheorem {Theorem}{Theorem}
\numberwithin{Theorem}{section}
\newtheorem {Lemma}[Theorem]    {Lemma}
\newtheorem {Proposition}[Theorem]{Proposition}
\newtheorem {Corollary}[Theorem]{Corollary}
\theoremstyle{definition}
\newtheorem{Definition}[Theorem]{Definition}
\theoremstyle{remark}
\newtheorem{Remark}[Theorem]{Remark}
\newtheorem{Example}[Theorem]{Example}
\chardef\csname pre amssym.def at\endcsname=\the\catcode`\@
\def\undefine#1{\let#1\undefined}
\def\newsymbol#1#2#3#4#5{\let\next@\relax
 \ifnum#2=\@ne\let\next@\msafam@\else
 \ifnum#2=\tw@\let\next@\msbfam@\fi\fi
 \mathchardef#1="#3\next@#4#5}
\def\mathhexbox@#1#2#3{\relax
 \ifmmode\mathpalette{}{\m@th\mathchar"#1#2#3}%
 \else\leavevmode\hbox{$\m@th\mathchar"#1#2#3$}\fi}
\def\hexnumber@#1{\ifcase#1 0\or 1\or 2\or 3\or 4\or 5\or 6\or 7\or 8\or
 9\or A\or B\or C\or D\or E\or F\fi}
\font\teneufm=eufm10
\font\seveneufm=eufm7
\font\fiveeufm=eufm5
\def    \eps    {\epsilon}
\newcommand{\CA}{{\mathcal A}}
\newcommand{\CI}{{\mathcal I}}
\newcommand{\CM}{{\mathcal M}}
\newcommand{\CS}{{\mathcal S}}
\newcommand{\supp}{\operatorname{supp}}
\newcommand{\id}{{\mathit id}}
\newcommand{\pt}{{\mathit pt}}
\newcommand{\const}{{\mathit const}}
\newcommand{\tF}{\tilde{F}}
\newcommand{\tf}{\tilde{f}}
\newcommand{\teta}{\tilde{\eta}}
\newcommand{\tH}{\tilde{H}}
\newcommand{\A}{{\mathcal A}}
\newcommand{\tA}{\tilde{\mathcal A}}
\newcommand{\PP}{{\mathcal P}}
\newcommand{\bPP}{\bar{\mathcal P}}
\def    \F      {{\mathbb F}}
\def    \C      {{\mathbb C}}
\def    \R      {{\mathbb R}}
\def    \Z      {{\mathbb Z}}
\def    \Q      {{\mathbb Q}}
\def    \T      {{\mathbb T}}
\def    \CP     {{\mathbb C}{\mathbb P}}
\def    \12    {{\frac{1}{2}}}
\def    \p      {\partial}
\def    \ev  {\operatorname{ev}}
\def    \cl     {\operatorname{cl}}
\def    \im     {\operatorname{im}}
\def    \Sp     {\operatorname{Sp}}
\def    \HF     {\operatorname{HF}}
\def    \HQ     {\operatorname{HQ}}
\def    \GW     {\operatorname{GW}}
\def    \CF     {\operatorname{CF}}
\def    \bx     {\bar{x}}
\def    \by     {\bar{y}}
\def    \bz     {\bar{z}}
\def    \MUCZ  {\operatorname{\mu_{\scriptscriptstyle{CZ}}}}
\def    \s  {\operatorname{c}}
\def    \sls  {\operatorname{c}^{\scriptscriptstyle{LS}}}
\def    \ssminus        {\smallsetminus}
\begin{document}


\setlength{\smallskipamount}{6pt}
\setlength{\medskipamount}{10pt}
\setlength{\bigskipamount}{16pt}





\title[Action and Index Spectra and Periodic Orbits]{Action and Index
Spectra and Periodic Orbits in Hamiltonian Dynamics}

\author[Viktor Ginzburg]{Viktor L. Ginzburg}
\author[Ba\c sak G\"urel]{Ba\c sak Z. G\"urel}

\address{VG: Department of Mathematics, UC Santa Cruz,
Santa Cruz, CA 95064, USA}
\email{ginzburg@math.ucsc.edu}

\address{BG: Department of Mathematics, Vanderbilt University,
Nashville, TN 37240, USA} \email{basak.gurel@vanderbilt.edu}

\subjclass[2000]{53D40, 37J45, 70H12}
\date{\today} \thanks{The work is partially supported by the NSF and
by the faculty research funds of the University of California, Santa
Cruz.}


\begin{abstract}
The main theme of this paper is the connection between the existence
of infinitely many periodic orbits for a Hamiltonian system and the
behavior of its action or index spectrum under iterations.  We use the
action and index spectra to show that any Hamiltonian diffeomorphism
of a closed, rational manifold with zero first Chern class has
infinitely many periodic orbits and that, for a general rational
manifold, the number of geometrically distinct periodic orbits is
bounded from below by the ratio of the minimal Chern number and half
of the dimension. These generalizations of the Conley conjecture
follow from another result proved here asserting that a Hamiltonian
diffeomorphism with a symplectically degenerate maximum on a closed
rational manifold has infinitely many periodic orbits.

We also show that for a broad class of manifolds and/or Hamiltonian
diffeomorphisms the minimal action--index gap remains bounded for some
infinite sequence of iterations and, as a consequence, whenever a
Hamiltonian diffeomorphism has finitely many periodic orbits, the
actions and mean indices of these orbits must satisfy a certain
relation. Furthermore, for Hamiltonian diffeomorphisms of $\CP^n$
with exactly $n+1$ periodic orbits a stronger result holds. Namely,
for such a Hamiltonian diffeomorphism, the difference of the action
and the mean index on a periodic orbit is independent of the orbit,
provided that the symplectic structure on $\CP^n$
is normalized to be in the same cohomology class as the first Chern class.

\end{abstract}

\maketitle

\tableofcontents

\section{Introduction and main results }
\labell{sec:main-results}

\subsection{Introduction}
\labell{sec:intro}

The main theme of this paper is the interplay between two aspects of
the dynamics of Hamiltonian systems: the existence of periodic orbits
of arbitrarily large period (or just of infinitely many periodic
orbits) and the behavior of the action or (mean) index spectrum under
iterations.

When the manifold is closed and symplectically aspherical, this
interplay is fairly unambiguous and can, for instance, be described as
follows. On the one hand, as is proved in \cite{GG:gap}, the minimal
positive action-index gap remains bounded for a certain sequence of
iterations, and therefore the action-index spectrum ``grows'' with
iteration. On the other hand, every Hamiltonian system on such a
manifold has infinitely many periodic orbits (see Section
\ref{sec:conley}) and this fact can easily be inferred from the
boundedness of the action-index gap; \cite{GG:gap}. In fact, all
symplectic topological proofs of the existence of infinitely many
periodic orbits rely on the analysis of the action or index spectrum;
see, e.g., \cite{Gi:conley,Hi,HZ,SZ,schwarz,Vi:gen}.

However, once the manifold is not assumed to be symplectically
aspherical, the question becomes considerably more involved. The first
reason for this is that a Hamiltonian diffeomorphism of a closed
manifold need not have infinitely many periodic orbits. The second
reason is that, since in general the action and index depend on the
choice of capping of an orbit, the task of extracting information
about the number or growth of the number of orbits from the spectrum
becomes much more difficult.

Here we address the following two questions:

\begin{itemize}

\item Under what conditions on the manifold and/or the action or index
  spectrum, a Hamiltonian diffeomorphism has infinitely many (or just
  many) periodic orbits?

\item What are the special features of the action or index spectrum of
  a Hamiltonian diffeomorphism with only finitely many periodic
  orbits?
\end{itemize}
These two questions, although formally equivalent, represent two very
different, virtually opposite perspectives focusing on mutually
complementary classes of Hamiltonian diffeomorphisms.

The main results of the paper are stated and discussed in detail in
Sections \ref{sec:conley}--\ref{sec:sdm} and the organization of the
paper is outlined in Section \ref{sec:org}.  We refer the reader to
Section \ref{sec:prelim} for necessary definitions and further
references.

\subsection{The Conley conjecture}
\label{sec:conley}

As we understand it today, the Conley conjecture asserts the existence
of infinitely many periodic orbits for any
Hamiltonian diffeomorphism $\varphi$ of a closed, symplectically
aspherical manifold $M$ or, more precisely, the existence of periodic
points of arbitrarily large period, provided that the fixed points are
isolated.  This conjecture was proved for the so-called weakly
non-degenerate Hamiltonian diffeomorphisms in \cite{SZ} and for all
Hamiltonian diffeomorphisms of surfaces, other than $S^2$, in
\cite{FrHa}. In its original form, as stated in \cite{Co} for
$M=\T^{2n}$, the conjecture was established in \cite{Hi} and, finally,
the case of an arbitrary closed, symplectically aspherical manifold
was settled in \cite{Gi:conley}. (See also, e.g.,
\cite{FS,Gu,HZ,schwarz,Vi:gen} for other related results.)  The first
theorem of this paper, proved in Section \ref{sec:conley-pfs}, is an
extension of the Conley conjecture to rational
symplectic manifolds with $c_1(M)\mid_{\pi_2(M)}=0$.

\begin{Theorem}
\labell{thm:conley}
Let $\varphi$ be a Hamiltonian diffeomorphism of a closed, rational
symplectic manifold $(M^{2n},\omega)$ with $c_1(M)\mid_{\pi_2(M)}=0$.
Then $\varphi$ has simple periodic orbits of arbitrarily large period
whenever the fixed points of $\varphi$ are isolated.
\end{Theorem}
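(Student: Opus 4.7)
The plan is to argue by contradiction, following the strategy that Ginzburg employed in \cite{Gi:conley} for the symplectically aspherical case, but carrying out every step in the Novikov-theoretic framework appropriate to rational manifolds with $c_1|_{\pi_2(M)}=0$. Assume that $\varphi$ has only finitely many geometrically distinct periodic orbits and that all of its fixed points (at every iteration) are isolated. The goal is to exhibit, among these orbits, one which is a symplectically degenerate maximum (SDM) in the sense used later in this paper; the SDM theorem of the paper then asserts that any Hamiltonian diffeomorphism of a closed rational symplectic manifold admitting an SDM has simple periodic orbits of arbitrarily large period, contradicting the finiteness assumption.

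The two standing hypotheses on $(M,\omega)$ control the two basic obstructions to running the aspherical argument in this more general setting. Rationality of $(M,\omega)$ means that $\l\omega,\pi_2(M)\r$ is a discrete subgroup $\lambda\Z\subset\R$, so the action spectrum projects to a finite subset of $\R/\lambda\Z$ once the orbits are fixed. The hypothesis $c_1|_{\pi_2(M)}=0$ ensures that the Conley-Zehnder and mean indices of a capped periodic orbit are independent of the capping, so the mean index spectrum under iteration is likewise finite. These two facts are precisely what is needed to make filtered Floer-Novikov homology and the PSS action selector attached to the fundamental class $[M]$ behave as in the symplectically aspherical case.

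To locate the SDM I would use the homological action selector $c_{[M]}(H^{(k)})$ for the iterated Hamiltonian $H^{(k)}$, where $H$ generates $\varphi$. Under the finiteness assumption, for every $k$ the value $c_{[M]}(H^{(k)})$ is realized on some capped $k$-periodic orbit $(x_k,\bar x_k)$. By passing to a subsequence one arranges that all $x_k$ are iterates of a fixed simple orbit $x$; the sub-additivity of the selector together with the linear growth of $c_{[M]}(H^{(k)})/k$ then forces the mean index of the iterates $(x^{k},\bar x^{k})$ to vanish and pins down their actions on the Novikov cover. A local Floer-homology computation at $x^k$, combined with the fact that $(x_k,\bar x_k)$ carries the fundamental class in the filtered Floer complex, identifies $x$ as an SDM and yields the desired contradiction.

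The principal technical obstacle is the bookkeeping of cappings. In the aspherical setting of \cite{Gi:conley} the action functional is single-valued and the selector $c_{[M]}(H^{(k)})$ picks out a unique orbit with a preferred capping; in the rational case the action is only well-defined on the Novikov cover, the selector singles out a coset of cappings, and one must ensure that the capping chosen by the selector is compatible with the capping obtained by iterating the simple orbit. This compatibility is exactly what the hypothesis $c_1|_{\pi_2(M)}=0$ buys: it makes the relevant index well-defined on $M$ rather than only on the cover, and it is this that allows the local Floer-homology identification of the SDM to go through without the aspherical assumption.
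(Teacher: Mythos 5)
Your overall strategy---assume finitely many orbits, extract a symplectically degenerate maximum, and invoke the SDM machinery---is sound in outline and close in content to the paper's proof, but the mechanism you point to for the crucial step is wrong, and there is a second point that needs care. Subadditivity of the selector $\s_{[M]}$ together with the asymptotics of $\s_{[M]}\big(H^{(k)}\big)/k$ does \emph{not} force $\Delta_H(x)=0$: subadditivity controls the normalized \emph{action}, and an orbit with strictly positive mean index is perfectly consistent with well-behaved action asymptotics. What actually pins down the mean index is the spectrality property (AS10), which tells you the capped orbit $\bar x_k$ realizing $\s_{[M]}\big(H^{(k)}\big)$ carries $\HF_{2n}\big(H^{(k)},\bar x_k\big)\neq 0$; then the support bound (LF5) forces $\Delta_{H^{(k)}}(\bar x_k)\in[0,2n]$, and this together with the iteration formula (MI1) and the capping-independence of the mean index (which is exactly what $c_1|_{\pi_2(M)}=0$ buys) forces $\Delta_H(x)=0$ once $x_k$ is an iterate of a fixed simple orbit $x$. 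Note that the selector is not actually needed here---one can get the carrier of $\HF_{2n}$ directly from (LF2), which is how the paper locates the relevant orbit in the proof of Theorem~\ref{thm:gap1}. A second issue: as sketched, your argument produces an SDM of $H^{(k)}$ for some large $k$, not of $H$ itself, and passing from the iterate to $H$ via Theorem~\ref{thm:persist-lf} is not automatic (its final assertion runs in the opposite direction). You should instead apply Theorem~\ref{thm:sdm-conley} directly at the iterate; part~(i) gives infinitely many geometrically distinct orbits, which combined with the isolation hypothesis yields unbounded simple periods, or part~(ii) can be applied with $H^{(k)}$ playing the role of the base Hamiltonian since $c_1|_{\pi_2(M)}=0$.

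The paper's proof avoids both points by arguing via a dichotomy rather than by contradiction-forcing an SDM: either $H$ already has a symplectically degenerate maximum, in which case Theorem~\ref{thm:sdm-conley} applies, or the capped orbits carrying $\HF_{2n}(H)$ have positive mean index, in which case the capping-independence of $\Delta$ (again coming from $c_1|_{\pi_2(M)}=0$) makes the support of the local Floer homology of every orbit escape $[0,2n]$ under iteration, so $\HF_{2n}\big(H^{(p)}\big)\neq 0$ forces new simple orbits for large primes $p$. Both routes rest on the same ingredients---persistence of local Floer homology under iteration and the support bound (LF5)---so the conceptual content is the same; the paper's packaging simply sidesteps the action selector and the SDM-at-an-iterate bookkeeping that your version has to carry.
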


The Conley conjecture obviously fails unless the symplectic manifold
(or the diffeomorphism) meets additional requirements such as the
condition $c_1(M)\mid_{\pi_2(M)}=0$ in Theorem \ref{thm:conley}. For
instance, the rotation of $S^2$ in an irrational angle has only two
periodic orbits. (Both of these orbits are fixed points of the
rotation). More generally, we have

\begin{Example}
\label{exam: torus-action}
Let $M$ admit a Hamiltonian torus action with isolated fixed points;
see, e.g., \cite{CdS,GGK,MS:intro} for the definition and further
details. Then a generic element of the torus generates a Hamiltonian
diffeomorphism of $M$ with finitely many periodic orbits and these
orbits are the fixed points of the action. Within this class of
manifolds $M$ are, for instance, the majority of coadjoint orbits of
compact Lie groups. As an explicit example of this type, consider the
Hamiltonian $H=\alpha_0 |z_0|^2+\ldots+\alpha_n |z_n|^2$ on
$\CP^n$. Then $H$ generates a circle action with isolated fixed
points, provided that the eigenvalues $\alpha_0,\ldots,\alpha_n$ are
rationally independent, i.e., linearly independent over
$\Q$. Furthermore, when $M$ is as above, the equivariant blow-up of
$M$ at the fixed points inherits a Hamiltonian torus action and this
action also has, in many instances, isolated fixed points.
\end{Example}

Let $N$ denote the minimal Chern number of $M$; see Section
\ref{sec:conv}. The following result, also established in Section
\ref{sec:conley-pfs}, is of interest when $N$ is large.

\begin{Theorem}
\label{thm:N/n}
Let $M^{2n}$ be a closed, rational, weakly monotone manifold with $2N>
3n$. Then any Hamiltonian diffeomorphism $\varphi$ of $M$ has at least
$\lceil N/n \rceil$ geometrically distinct periodic orbits.
\end{Theorem}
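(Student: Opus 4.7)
My plan is to argue by contradiction using the main SDM theorem flagged in the abstract: any Hamiltonian diffeomorphism of a closed rational manifold with a symplectically degenerate maximum (SDM) has infinitely many periodic orbits. Suppose $\varphi$ has only $K<\lceil N/n\rceil$ geometrically distinct periodic orbits $\gamma_1,\ldots,\gamma_K$. Rearranged, the hypothesis reads $2nK<2N$; the extra slack $2N>3n$ (rather than merely $2N>2n$) buys the additional room needed to detect the SDM via local Floer homology. The strategy is to exhibit one of the $\gamma_i$, or an iterate thereof, as an SDM and thereby contradict finiteness.

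The framework is Floer homology of the iterated maps $\varphi^k$. Under the weakly monotone hypothesis, $\HF_*(\varphi^k)$ is $\Z/2N\Z$-graded and, by the PSS isomorphism, additively isomorphic to $H_*(M;\Q)\otimes\Lambda$ for a Novikov ring $\Lambda$. Because $2N>2n$, the ordinary degrees of $H_*(M)$ inject into $\Z/2N\Z$ without collisions, so in particular the point class and the fundamental class contribute to two fixed, nonzero residues. Each geometric orbit $\gamma_i$ of minimal period $d_i$ contributes its $d_i$ fixed points in $\varphi^k$ to a single residue in $\Z/2N\Z$, and that residue lies in a window of length $2n$ centered at the iterated mean index of $\gamma_i$, the width $2n$ accounting for the spread of the local Floer homology. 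The chain-level Morse inequality $\rk\CF_d(\varphi^k)\geq\rk\HF_d(\varphi^k)$ in each residue $d$ then constrains how the $K$ residues can cover the nonzero Betti residues.

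The crucial pigeonhole step varies $k$: summed over the $K$ orbits, the windows cover a subset of $\R/2N\Z$ of total length at most $2nK<2N$, hence proper. If every $\hat\mu(\gamma_i)$ is nonzero modulo $2N$, a Kronecker equidistribution argument on $(k\hat\mu(\gamma_i))\in(\R/2N\Z)^K$---supplemented by a direct cyclic treatment of any rationally dependent subsystem---produces an iteration $k$ for which the union of windows misses a Betti residue, forcing $\HF_d(\varphi^k)=0$ in a degree where it must be nonzero. This is a contradiction, so some $\hat\mu(\gamma_i)\equiv 0\pmod{2N}$. The extra gap $2N-2n>n$ coming from $2N>3n$ then localizes the local Floer homology of the relevant iterate of $\gamma_i$ in the top degree $n$---exactly the SDM condition. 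Invoking the SDM theorem yields infinitely many periodic orbits of $\varphi$, contradicting the finiteness hypothesis.

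The main obstacle is executing the pigeonhole when the $\hat\mu(\gamma_i)$ are rationally related, where Kronecker equidistribution is unavailable and one must instead run a careful combinatorial argument adapted to each resonance pattern, while simultaneously keeping precise control of the support of local Floer homology inside each $2n$-window. The second, subtler technical hurdle is the final step: upgrading the conclusion $\hat\mu(\gamma)\equiv 0\pmod{2N}$ to the full SDM condition, which requires extracting top-degree local Floer homology from the chain-level Morse inequality by exploiting the slack $2N>3n$.
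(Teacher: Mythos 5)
Your overall strategy --- sliding mean-index windows of width $2n$ against the $\Z/2N$-graded Floer homology of $\varphi^k$, and running a density/equidistribution pigeonhole --- is in the same spirit as the paper's Proposition \ref{prop:N/n2}. But as written the proposal has two genuine gaps, both of which the paper handles by careful preliminary steps your outline omits.

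First, the deduction ``some $\hat\mu(\gamma_i)\equiv 0\pmod{2N}$, hence SDM'' does not close. Having a mean index $\equiv 0\pmod{2N}$ is only half of what is required: a symplectically degenerate maximum (Definition \ref{def:sdm}) needs \emph{both} $\Delta(\bx)=0$ \emph{and} $\HF_{2n}(H,\bx)\neq 0$. Your argument produces a candidate orbit purely from a residue condition on the mean index; nothing in it forces the top-degree local Floer homology of that orbit to be nonzero. The paper avoids this by running the logic the other way around: for each $k$, (LF2) together with $\HF_{2n}(F^{(k)})\cong\HQ_{2n}(M)\neq 0$ guarantees an orbit $\bx$ with $\HF_{2n}(F^{(k)},\bx)\neq 0$ \emph{before} its mean index is examined. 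Only then does one distinguish the cases $\Delta(\bx)=2Nl$ with $l\neq 0$ (impossible by (LF5)), $\Delta(\bx)=0$ (then both SDM conditions are in hand, and Theorem \ref{thm:sdm-conley}(i) finishes), and $\Delta(\bx)$ irrational (feeding into the density count). Starting from the orbit list and trying to extract an SDM from a mean-index congruence alone loses the indispensable local-Floer-homology half of the condition.

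Second, the proposed ``direct cyclic treatment of any rationally dependent subsystem'' is not a harmless supplement. If some $\hat\mu(\gamma_i)$ is a nonzero rational multiple of $2N$, the residues $k\hat\mu(\gamma_i)\pmod{2N}$ run through a finite cyclic orbit and can cover the degree-$2n$ residue for \emph{every} $k$. For instance, with $\hat\mu(\gamma_i)=N$ and $n<N\leq 2n$ (which is compatible with $2N>3n$), the value $k\hat\mu(\gamma_i)\pmod{2N}$ alternates between $0$ and $N$, both of which lie in $[0,2n]$; so the window of $\gamma_i$ contains the fundamental-class degree at every iteration and no choice of $k$ produces the missing residue you need. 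The paper eliminates this case up front by replacing $H$ with a suitable iteration $F=H^{(k_0)}$ (using the finiteness hypothesis), so that every element of $\CI_{2N}(F)$ is either an integer multiple of $2N$ or irrational modulo $2N$ --- only then is the ergodic pigeonhole run. This preprocessing step is the structural key to the argument and cannot be waved away.

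A small point: in the paper's normalization the top degree of local Floer homology is $2n$, not $n$.
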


For instance, if $c_1(M)\mid_{\pi_2(M)}=0$, i.e., $N=\infty$, the
theorem asserts the existence of infinitely many geometrically
distinct periodic points, which also follows from Theorem
\ref{thm:conley}.

To put Theorem \ref{thm:N/n} in context, note that as has been
hypothesized by Michael Chance and Dusa McDuff, the Conley conjecture
may hold for closed manifolds with sufficiently many Gromov--Witten
invariants equal to zero. For instance, one might expect the Conley
conjecture to be true, when $N>2n$ (or $N\geq n$ and $M$ is negative
monotone), and hence the quantum product coincides with the cup
product; see Example \ref{ex:quantum=cup}. Theorems \ref{thm:conley}
and \ref{thm:N/n} can be viewed as a step toward proving this
generalization of the Conley conjecture. Note that the manifolds from
Example \ref{exam: torus-action}, for which the Conley conjecture
fails, tend to have a large number of non-zero Gromov--Witten
invariants (see \cite{McD}) and also have $N\leq n+1$ in all known
examples. Among the manifolds satisfying the hypotheses of Theorem
\ref{thm:conley} are Calabi--Yau manifolds. There are also numerous
examples of weakly monotone, rational manifolds with large $N<\infty$,
though none of these manifolds are monotone; see Example
\ref{ex:quantum=cup}.

\subsection{The action--index gap}
\label{sec:gap}
As was pointed out in Section \ref{sec:intro}, all symplectic
geometrical approaches to proving the Conley conjecture type results,
such as Theorems \ref{thm:conley} and \ref{thm:N/n}, rely on the
analysis of the sets of actions or indices of periodic orbits (the
so-called action and mean index spectra).  From this perspective, the
Conley conjecture is quite similar to the degenerate case of the
Arnold conjecture (see Section \ref{sec:LS}) and differs significantly
from the non-degenerate Arnold conjecture whose proof utilizes a
direct count of periodic orbits via Floer homology. Furthermore,
extending the proof of the Conley conjecture beyond the case of a
symplectically aspherical manifold encounters the same difficulty as
the proof of the degenerate Arnold conjecture -- differentiating
between geometrically distinct orbits and recappings of the same
orbit.

The most naive approach to utilizing the action and mean index spectra
in proving the Conley conjecture and related results amounts to
showing that these spectra, modulo the rationality constant
$\lambda_0$ or modulo $2N$, change with iterations.  Arguments of this
type are discussed in more detail in Section \ref{sec:conley-pfs}.  In
this section, following \cite{GG:gap}, we describe a different way of
relating the properties of the action and mean index spectra to the
dynamics of $\varphi$.

To state the main result, we need to introduce some notation. Let $H$
be a one-periodic in time Hamiltonian on $M$. Then, $H$ can also be
viewed as a $k$-periodic Hamiltonian and in this case is denoted by
$H^{(k)}$ and referred to as the $k$th iteration of $H$.  Likewise,
the $k$th iteration of a capped periodic orbit $\bx$ is denoted by
$\bx^k$.  The periodic orbits $\bx$ and $\by$ of $H$ are said to be
geometrically distinct when the periodic orbits through $x(0)$ and
$y(0)$ of the Hamiltonian diffeomorphism $\varphi_H$ generated by $H$
are geometrically distinct.  Let $\CA_{H^{(k)}}(\by)$ and
$\Delta_{H^{(k)}}(\by)$ stand for the action and, respectively, the
mean index of $H$ on a $k$-periodic orbit $\by$.  We refer the reader
to Section \ref{sec:prelim} for a detailed discussion of these
notions. The action gap between two $k$-periodic orbits $\bx$ and
$\by$ of $H$ is then the difference
$\CA_{H^{(k)}}(\bx)-\CA_{H^{(k)}}(\by)$ and the mean index gap is
defined similarly as the difference
$\Delta_{H^{(k)}}(\bx)-\Delta_{H^{(k)}}(\by)$.  Note that with these
definitions, the action or mean index gap can be zero even when $\bx$
and $\by$ are geometrically distinct.  The action--index gap between
$\bx$ and $\by$ is simply the vector in $\R^2$ whose components are
the action and the mean index gaps. Let also $\|H\|$ denote the Hofer
norm of $H$; see Section \ref{sec:prelim}. Finally, recall that an
increasing (infinite) sequence of integers $\nu_1<\nu_2<\ldots$ is
quasi-arithmetic if $\nu_{i+1}-\nu_i$ is bounded from above by a
constant independent of $i$.

\begin{Theorem}[Bounded gap theorem]
\labell{thm:gap1}
  Let $H$ be a Hamiltonian on a closed
  symplectic manifold $(M^{2n},\omega)$ such that all periodic orbits
  of $\varphi_H$ are isolated.  Assume that $(M^{2n},\omega)$ is weakly
  monotone and rational and one of the following conditions holds:

\begin{itemize}

\item[(i)] $\|H\|< \lambda_0$, where $\lambda_0$ is the
  rationality constant of $M$, or
\item[(ii)] $N\geq 2n$.

\end{itemize}
Then there exists a capped one-periodic orbit $\bx$ of $H$, a
quasi-arithmetic sequence of iterations $\nu_i$, and a sequence of capped
$\nu_i$-periodic orbits $\by_i$, geometrically distinct from
$\bx^{\nu_i}$, such that the sequence of action--index gaps
\begin{equation}
\label{eq:gap}
\big(\A_{H^{(\nu_i)}}(\bx^{\nu_i})-\A_{H^{(\nu_i)}}(\by_i),
\Delta_{H^{(\nu_i)}}(\bx^{\nu_i})-\Delta_{H^{(\nu_i)}}(\by_i)\big)
\end{equation}
is bounded.
\end{Theorem}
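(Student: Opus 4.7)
The plan is to extend the strategy of \cite{GG:gap} from the symplectically aspherical case to the rational setting. The new difficulty is that each periodic orbit admits many non-equivalent cappings, differing by integer multiples of $\lambda_0$ in action and of $2N$ in mean index; hypotheses (i) and (ii) are designed precisely to control this ambiguity.

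For each iterate I would work with a Floer-theoretic action selector $c_k := \cf(\alpha, H^{(k)})$ for a fixed homology class $\alpha \in \HQ_*(M)$, say the fundamental class. Each $c_k$ is realized as $\A_{H^{(k)}}(\bz_k)$ for a carrier capped $k$-periodic orbit $\bz_k$ which, by the standard Floer-theoretic description, can be chosen with Conley--Zehnder index equal to $\deg \alpha$ up to a dimension shift; its mean index then lies in an interval of length at most $2n$ independent of $k$. Sub-additivity of $\cf$ yields the homogenization $\tilde c := \lim_{k\to\infty} c_k/k$ with $0 \leq c_k - k \tilde c = o(k)$. Assuming that $\varphi_H$ has only finitely many simple periodic orbits (otherwise the conclusion is essentially automatic), the pigeonhole principle produces a simple orbit $y$ and a quasi-arithmetic sequence $\nu_i$ along which $\by_i := \bz_{\nu_i}$ is always a capping of $y^{\nu_i}$. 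Fixing a reference capping $\by_\circ$ of $y$, rationality gives integers $m_i, m_i'$ with
\begin{equation*}
\A_{H^{(\nu_i)}}(\by_i) = \nu_i \A_H(\by_\circ) + m_i \lambda_0, \qquad \Delta_{H^{(\nu_i)}}(\by_i) = \nu_i \Delta_H(\by_\circ) + 2 m_i' N,
\end{equation*}
linked through the chosen recapping sphere class.

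The crucial step is to promote the sublinear control $c_{\nu_i} - \nu_i \tilde c = o(\nu_i)$ together with the bounded mean index of $\by_i$ into bounded control on the recapping integers $m_i$ and $m_i'$. Under (ii), the bounded mean index forces $2 m_i' N = -\nu_i \Delta_H(\by_\circ) + O(1)$; since $N \geq 2n$, the correspondence between $m_i$ and $m_i'$ via a single spherical class pins down $m_i$ on a further quasi-arithmetic subsequence. Under (i), the Hofer bound $|c_k| \leq k\|H\| < k\lambda_0$, combined with the discreteness of the action spectrum, controls $m_i$ first, and $m_i'$ follows by the same correspondence. One then chooses $\bx$ as a capping of a simple orbit distinct from $y$ --- the single-orbit case being ruled out by a separate recapping-rigidity argument using the already-established bounds on $m_i$ --- calibrated so that $\nu_i \A_H(\bx) - \A_{H^{(\nu_i)}}(\by_i)$ and $\nu_i \Delta_H(\bx) - \Delta_{H^{(\nu_i)}}(\by_i)$ both remain in the bounded ranges coming from $m_i$ and $m_i'$.

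The principal obstacle is this arithmetic passage from sublinear to bounded control, which uses hypothesis (i) or (ii) in an essential way and constitutes the genuinely new input beyond \cite{GG:gap}. A secondary difficulty is to ensure that $\bx$ is geometrically distinct from the underlying orbit of $\by_i$; this may require exploiting the weak monotonicity of $M$ and the Floer-theoretic non-triviality of $\alpha$ to guarantee at least two carriers with different underlying simple orbits.
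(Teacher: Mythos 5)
Your plan takes a genuinely different route from the paper's: you homogenize an action selector $c_k=\cf([M],H^{(k)})$ and try to read the bounded gap off sub-additivity plus a pigeonhole, whereas the paper's proof starts from a capped orbit $\bx$ with $\HF_{2n}(H,\bx)\neq 0$, splits into the case $\Delta_H(\bx)=0$ (symplectically degenerate maximum, handled via Theorem \ref{thm:sdm}) versus $\Delta_H(\bx)>0$ (weakly non-degenerate), and in the latter case gets a quasi-arithmetic sequence from the persistence of local Floer homology under admissible iterations (Theorem \ref{thm:persist-lf}), then controls action and index via the stability of filtered Floer homology (Lemma \ref{lemma:stab}) and an exact-sequence argument, with hypotheses (i)/(ii) entering only to rule out that $\by_i$ is a recapping of $\bx^{\nu_i}$.

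There are several concrete gaps in your approach. First and most seriously, sub-additivity of $c_k$ only yields $c_k-k\tilde c=o(k)$, not $c_k-k\tilde c=O(1)$; but the entire content of the theorem is precisely to upgrade growth control to \emph{bounded} control, so this sublinearity cannot serve as the starting point without further input. You flag the passage from sublinear to bounded as ``the principal obstacle'' but never close it, and the mechanisms you gesture at (e.g., ``pins down $m_i$ on a further quasi-arithmetic subsequence'') are not arguments. Second, the pigeonhole does not deliver a quasi-arithmetic sequence: if the carrier of $c_k$ underlies one of $m$ simple orbits for each $k$, one orbit certainly occurs infinitely often, but its set of occurrence indices need not have bounded gaps (consider partitioning $\Z_{>0}$ into two sets both with unbounded gaps, e.g.\ $\{k:\lfloor\log_2 k\rfloor \text{ even}\}$ and its complement). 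The paper manufactures the quasi-arithmetic structure directly from the admissible iterations and the fact that local Floer homology of $\bx^k$ can only take finitely many values up to degree shift; no analogue of this is present in your scheme. Third, the final step --- choosing $\bx$ ``calibrated so that'' both gap components stay bounded --- is unsupported; nothing in the argument produces a second orbit with a compatible augmented action, and in general no such orbit need exist. Finally, the phrase ``the correspondence between $m_i$ and $m_i'$ via a single spherical class'' tacitly assumes $\Gamma$ is cyclic, which rationality plus weak monotonicity does not guarantee (it holds for monotone $M$, but a weakly monotone, rational, non-monotone $M$ can have $\Gamma$ of rank two, decoupling $m_i$ and $m_i'$).
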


\begin{Corollary}
\label{cor:gap1}
Let $M$ and $H$ be as in Theorem \ref{thm:gap1}. Then there exists a
quasi-arithmetic sequence of iterations $\nu_i$ and sequences of
geometrically distinct $\nu_i$-periodic orbits $\bz_i$ and $\bz'_i$
such that the sequence of action--index gaps between $\bz_i$ and
$\bz'_i$ is bounded.
\end{Corollary}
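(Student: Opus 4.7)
The proof is essentially immediate from Theorem \ref{thm:gap1}. My plan is to simply set $\bz_i := \bx^{\nu_i}$ and $\bz'_i := \by_i$, where $\bx$, the quasi-arithmetic sequence $\nu_i$, and the capped $\nu_i$-periodic orbits $\by_i$ are those produced by the theorem. Then $\bz_i$ is $\nu_i$-periodic by definition of the iteration of a capped one-periodic orbit, and $\bz'_i = \by_i$ is $\nu_i$-periodic by hypothesis. Geometric distinctness of $\bz_i$ from $\bz'_i$ is exactly the assertion that $\by_i$ is geometrically distinct from $\bx^{\nu_i}$, which is part of the conclusion of the theorem. Finally, the action--index gap between $\bz_i$ and $\bz'_i$ is, tautologically, the vector
\[
\big(\A_{H^{(\nu_i)}}(\bx^{\nu_i})-\A_{H^{(\nu_i)}}(\by_i),\
\Delta_{H^{(\nu_i)}}(\bx^{\nu_i})-\Delta_{H^{(\nu_i)}}(\by_i)\big)
\]
appearing in \eqref{eq:gap}, which is bounded by the theorem.

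Thus there is no genuine obstacle: the corollary is a mild repackaging of Theorem \ref{thm:gap1} into a form symmetric in the two orbit sequences, useful because the conclusion no longer privileges a distinguished capped one-periodic orbit $\bx$ and can be quoted in contexts where the provenance of $\bz_i$ is immaterial. One should note, however, that the corollary does retain strictly less information than the theorem. In particular, the two sequences $\bz_i$ and $\bz'_i$ need not be related across different values of $i$, whereas in the construction above $\bz_i$ is the $\nu_i$th iterate of a single capped one-periodic orbit; any later application that requires this finer information must invoke Theorem \ref{thm:gap1} directly rather than the corollary.
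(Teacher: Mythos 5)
Your proof is correct and is exactly the argument the paper has in mind; the paper gives no explicit proof of Corollary \ref{cor:gap1}, treating it as immediate from Theorem \ref{thm:gap1} via precisely the substitution $\bz_i := \bx^{\nu_i}$, $\bz'_i := \by_i$ that you describe.
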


\begin{Remark} 
  The rationality assumption on $M$ can be omitted in (ii) when $H$ is
  weakly non-degenerate. Furthermore, regarding the role of the
  condition that all periodic orbits of $H$ are isolated, note that
  the theorem holds trivially without any assumptions on $M$ or $H$
  once this is not the case, i.e., $H$ has infinitely many
  $k$-periodic orbits for some $k$. Observe also that requirement (i)
  is automatically satisfied when $\lambda_0=\infty$, e.g., when $M$
  is symplectically aspherical.
\end{Remark}

Theorem \ref{thm:gap1} is proved in Section \ref{sec:gap-pfs}.  When
$M$ is symplectically aspherical, the theorem (in a slightly stronger
form) was originally established in \cite{GG:gap} where it was also
observed that in this form the theorem implies the Conley conjecture,
cf.\ \cite{HZ,schwarz,Vi:gen}. This is no longer the case when $M$ is
not symplectically aspherical; see Section
\ref{sec:gap-conley}. However, Theorem \ref{thm:gap1} and Theorem
\ref{thm:gap2} stated below do have an application within the realm of
the Conley conjecture, discussed in Section \ref{sec:augm-action}.

Theorem \ref{thm:gap1} concerns the situation when the quantum aspects
of the symplectic topology of $M$ can be neglected. For instance, this
is manifested by the fact that, as has been mentioned above, the
quantum product coincides with the cup product when $N>2n$ or by the
results from \cite{Al,Ke,schwarz2} when $\| H\|<\lambda_0$.

In the other extreme case -- when certain Gromov--Witten invariants of
$M$ are non-zero -- a much stronger version of Theorem \ref{thm:gap1}
holds. Namely, in this case any Hamiltonian $H$ with isolated
one-periodic orbits has a non-zero action--index gap bounded by a
constant independent of the Hamiltonian.  Denote by $\Lambda$ the
Novikov ring of $M$, equipped with the valuation
$I_\omega(A):=-\left<\omega,A\right>$, $A\in\pi_2(M)$, and by $*$ the
product in the quantum homology $\HQ_*(M)$; see Section
\ref{sec:Floer} for the definitions.

\begin{Theorem}[A priori bounded gap theorem]
\labell{thm:gap2}
Let $(M^{2n},\omega)$ be a closed, weakly monotone symplectic
manifold. Assume that there exists $u\in H_{*<2n}(M)$ and $w\in
H_{*<2n}(M)$ and $\alpha\in\Lambda$ such that
\begin{equation}
\label{eq:hom}
[M]=(\alpha u)*w.
\end{equation}
Let $H$ be a Hamiltonian on $M$ with isolated one-periodic
orbits. Assume, in addition, that one of the following conditions
holds:
\begin{itemize}

\item[(a)] $M$ is rational and $I_\omega(\alpha)=\lambda_0$;

\item[(b)] $2n-\deg u<2N$ and $H$ is non-degenerate.
\end{itemize}
Then $H$ has two geometrically distinct capped
one-periodic orbits $\bx$ and $\by$ such that
\begin{equation}
\label{eq:gap21}
0< \A_{H}(\bx)-\A_{H}(\by)<I_\omega(\alpha)
\end{equation}
and
\begin{equation}
\label{eq:gap22}
\big|\Delta_{H}(\bx)-n\big|\leq n
\text{ and }
n\leq\big|\Delta_{H}(\by)-\deg u\big|\leq 2n.
\end{equation}
\end{Theorem}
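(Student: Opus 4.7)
The plan is to use spectral invariants (action selectors) in quantum Hamiltonian Floer theory, following the Schwarz--Oh--Chance--McDuff framework. Denote by $c(a, H)$ the spectral invariant associated to $a \in \HQ_*(M)$ and a Hamiltonian $H$. I would rely on four standard properties: (i) realization --- $c(a, H) = \A_H(\bx)$ for some capped one-periodic orbit $\bx$ of $H$ (automatic when $H$ is non-degenerate, and obtained via an approximation argument when it is not) whose Conley--Zehnder index is controlled by $\deg a$; (ii) the triangle inequality $c(a*b, H \# K) \leq c(a, H) + c(b, K)$; (iii) the Novikov shift $c(\lambda a, H) = c(a, H) + I_\omega(\lambda)$; (iv) the normalization $c(w, 0) = 0$ for $w \in H_*(M) \subset \HQ_*(M)$.

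First I would take $\bx$ to be a capped one-periodic orbit realizing $c([M], H)$, so $\A_H(\bx) = c([M], H)$. Since $\deg [M] = 2n$, property (i) combined with the universal bound $|\Delta_H(\bx) - \MUCZ(\bx)| \leq n$ yields $|\Delta_H(\bx) - n| \leq n$, which is precisely the asserted index window. For $\by$, I would apply (ii) to the factorization $[M] = (\alpha u) * w$ with the second Hamiltonian set to zero, getting
\[
c([M], H) \;\leq\; c(\alpha u, H) + c(w, 0) \;=\; c(u, H) + I_\omega(\alpha),
\]
by properties (iii) and (iv). Choosing $\by$ as a capped orbit realizing $c(u, H)$ then gives the upper bound $\A_H(\bx) - \A_H(\by) \leq I_\omega(\alpha)$. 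The index upper bound $|\Delta_H(\by) - \deg u| \leq 2n$ follows from (i) applied to $u$ together with the universal mean-index/Conley--Zehnder defect, while the lower bound $n \leq |\Delta_H(\by) - \deg u|$ would come from tracking the shift of capping forced by the Novikov multiple $\alpha$: the factorization through a non-trivial $\alpha$ compels $\by$ to sit in a capping whose Conley--Zehnder index is shifted relative to the ``naive'' capping detecting $u$ in Morse theory.

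For strict positivity $0 < \A_H(\bx) - \A_H(\by)$ and geometric distinctness, I would split into cases. In case (a), the hypothesis $I_\omega(\alpha) = \lambda_0$ is the crucial input: any two cappings of the same geometric orbit have action gap an integer multiple of $\lambda_0$, so an action gap lying strictly in $(0, \lambda_0)$ automatically forces $\bx$ and $\by$ to be geometrically distinct, and strict positivity reduces to excluding the sharp equality $c([M], H) = c(u, H)$, which is prevented by the distinct Novikov positions of $[M]$ and $u$ together with the inequality above. In case (b), non-degeneracy makes property (i) sharp, and the degree bound $2n - \deg u < 2N$ rules out the possibility that $\bx$ and $\by$ differ only by a Novikov recapping (such a recapping would shift the Conley--Zehnder index by a non-zero multiple of $2N$, contradicting the degree window), so the Floer-theoretic orbits are themselves distinct.

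The hard part will be (i) verifying the strict lower bound $\A_H(\bx) > \A_H(\by)$ as opposed to mere non-strict inequality, which requires a finer analysis of the spectral filtration than the triangle inequality alone provides, and (ii) pinning down the sharp index window for $\by$, especially the lower estimate $n \leq |\Delta_H(\by) - \deg u|$; this last point depends on making the relationship between the Novikov element $\alpha$, the capping shift it induces, and the resulting change in the Conley--Zehnder and mean indices entirely explicit. A secondary technical point is the degenerate extension of the realization property in case (a), which I would handle by approximating $H$ by non-degenerate Hamiltonians, extracting orbits realizing the spectral values, and passing to the limit using a standard Floer-theoretic compactness argument combined with the isolation hypothesis on the one-periodic orbits of $H$.
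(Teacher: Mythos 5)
Your framework (spectral invariants, triangle inequality, Novikov shift, spectrality) is the same as the paper's, and your treatment of geometric distinctness in cases (a) and (b) is essentially the correct argument. But there is a genuine gap at the exact point you flag as ``the hard part'': the strict inequalities $0 < \CA_H(\bx) - \CA_H(\by) < I_\omega(\alpha)$. The triangle inequality alone gives only $\s_{[M]}(H) \leq \s_u(H) + I_\omega(\alpha)$ and $\s_u(H) \leq \s_{[M]}(H)$, both non-strict, and your proposed fix---``distinct Novikov positions of $[M]$ and $u$''---does not work: both $[M]$ and $u$ lie in $H_*(M)$, so $I_\omega([M]) = I_\omega(u) = 0$; they sit at the \emph{same} Novikov position, and there is no a priori obstruction to $\s_{[M]}(H) = \s_u(H)$ for a general Hamiltonian. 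The equality is excluded only because the periodic orbits of $H$ are assumed isolated, and extracting strictness from that hypothesis requires a Hamiltonian Ljusternik--Schnirelman inequality: for $w \in \HQ_*^-(M)$ (i.e., $w$ without a component along $[M]\Lambda$) and $H$ with isolated periodic orbits, $\s_{v*w}(H) < \s_v(H) + I_\omega(w)$ strictly. This is Proposition \ref{prop:LSF} in the paper, and its proof is a real argument in itself: one composes $H$ with a small Hamiltonian $f = a(t)g$ where $g \leq 0$ vanishes exactly near the fixed points of $\varphi_H$ and is strictly negative elsewhere, shows that $\s_w(f) \leq I_\omega(w) - \delta < I_\omega(w)$ because all Morse-theoretic critical selectors of $g$ for classes below $[M]$ are strictly negative, and verifies by an isospectrality argument that $\s_{v*w}(H) = \s_{v*w}(H \# f)$. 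Applying this once to $u = u*[M]$ gives the strict lower bound, and once to $[M] = (\alpha u)*w$ gives the strict upper bound. Without this device (or an equivalent one), your proof establishes only the non-strict versions of \eqref{eq:gap21}, which are not enough to conclude that $\bx \neq \by$ nor that $\bx$ is not a recapping of $\by$.

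A second, less central issue: your explanation of the lower bound $n \leq |\Delta_H(\by) - \deg u|$ via ``capping shift forced by the Novikov multiple $\alpha$'' is not the right mechanism. In the paper this inequality is part of the spectrality property (AS10) of the action selector itself---it is a statement about how the spectral value $\s_u(H)$ is realized by a capped orbit with local Floer homology nontrivial in degree $\deg u$---and does not depend on $\alpha$ at all; $\by$ is chosen to realize $\s_u(H)$, not $\s_{\alpha u}(H)$.
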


\begin{Corollary}
\label{cor:gap2}
Let $M$ and $H$ be as in Theorem \ref{thm:gap2}. Then, for every $k$,
the Hamiltonian $H$ has two geometrically distinct $k$-periodic orbits
$\bz$ and $\bz'$ such that the action--index gap between $\bz$ and
$\bz'$ is non-zero and bounded by a constant independent of $H$ and
$k$.
\end{Corollary}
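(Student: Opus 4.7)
The plan is to derive Corollary \ref{cor:gap2} by simply applying Theorem \ref{thm:gap2} to the iterated Hamiltonian $H^{(k)}$ for each $k$, treating it as a one-periodic Hamiltonian on $M$. All hypotheses on $M$ (closed, weakly monotone, rational in case (a), the homology identity $[M]=(\alpha u)*w$, and the degree bound $2n-\deg u<2N$ in case (b)) are intrinsic to the symplectic manifold and do not depend on the Hamiltonian, hence carry over verbatim to $H^{(k)}$. The only standing hypothesis that needs to be verified at each iteration is that $H^{(k)}$ has isolated one-periodic orbits, i.e., that the $k$-periodic orbits of $H$ are isolated; when they are not, $H$ already has infinitely many $k$-periodic orbits and the conclusion of the corollary is immediate. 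In case (b), non-degeneracy of $H^{(k)}$ follows from the standing non-degeneracy of $H$ (which we interpret in the strong sense, as is customary for iteration statements).

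Granted this, Theorem \ref{thm:gap2} applied to $H^{(k)}$ produces two geometrically distinct capped one-periodic orbits $\bz$ and $\bz'$ of $H^{(k)}$, i.e., capped $k$-periodic orbits of $H$, satisfying
\[
0 < \A_{H^{(k)}}(\bz) - \A_{H^{(k)}}(\bz') < I_\omega(\alpha),
\]
so the action component of the gap is nonzero and bounded by the constant $I_\omega(\alpha)$, which depends only on $M$ and $\alpha$. For the mean-index component, the two estimates $|\Delta_{H^{(k)}}(\bz)-n|\le n$ and $|\Delta_{H^{(k)}}(\bz')-\deg u|\le 2n$ combine via the triangle inequality to give
\[
\big|\Delta_{H^{(k)}}(\bz)-\Delta_{H^{(k)}}(\bz')\big| \le 3n + |n-\deg u|,
\]
again a constant independent of $H$ and $k$. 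Thus the action--index gap vector is nonzero (its first component is strictly positive) and its norm is bounded by a constant independent of $H$ and $k$.

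The one point I expect to require care is the distinction between being geometrically distinct as one-periodic orbits of $H^{(k)}$ (that is, as fixed points of $\varphi_H^k$) and being geometrically distinct as $k$-periodic orbits of $H$ in the sense of the paper's definition (that is, lying on distinct orbits of $\varphi_H$). If $\bz$ and $\bz'$ were on the same $\varphi_H$-orbit, then after matching their cappings by transport along the flow, their actions would differ only by an $\omega$-period. In case (a) this forces $\A_{H^{(k)}}(\bz) - \A_{H^{(k)}}(\bz')$ to be an integer multiple of $\lambda_0=I_\omega(\alpha)$, which is incompatible with the strict inequalities $0<\A_{H^{(k)}}(\bz)-\A_{H^{(k)}}(\bz')<I_\omega(\alpha)$; in case (b), the index constraints in \eqref{eq:gap22} produce the analogous contradiction since orbits on a common $\varphi_H$-orbit have equal mean indices under matched cappings, while $|n-\deg u|<2n$ under the hypothesis $2n-\deg u<2N$ combined with the index bounds keeps $\bz$ and $\bz'$ from being identified. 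Ruling out this coincidence is the only genuine obstacle; once it is handled, the corollary follows by specializing the conclusions of Theorem \ref{thm:gap2} for $H^{(k)}$ to $\bz=\bz^{(k)}$ and $\bz'=(\bz')^{(k)}$ in the notation of the corollary.
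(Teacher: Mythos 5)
Your proposal follows the paper's proof exactly: apply Theorem~\ref{thm:gap2} to $H^{(k)}$ treated as a one-periodic Hamiltonian, and then use the action and index bounds from \eqref{eq:gap21} and \eqref{eq:gap22} to rule out the possibility that the two orbits are geometrically identical $k$-periodic orbits of $H$ despite being distinct fixed points of $\varphi_H^k$. You correctly identify this last point as the only real issue, and your handling of case~(a) via the action bound $0<\A_{H^{(k)}}(\bz)-\A_{H^{(k)}}(\bz')<\lambda_0$ is the same as the paper's.

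Your treatment of case~(b) is imprecise, though. You try to close the argument using the mean-index estimates \eqref{eq:gap22} alone, but these only give $\Delta_{H^{(k)}}(\bz)\in[0,2n]$ and $\Delta_{H^{(k)}}(\bz')\in[\deg u-2n,\deg u-n]\cup[\deg u+n,\deg u+2n]$, so the difference $\Delta_{H^{(k)}}(\bz)-\Delta_{H^{(k)}}(\bz')$ only lands in the interval $[-\deg u-2n,\,4n-\deg u]$, which under the hypothesis $0<2n-\deg u<2N$ may well contain nonzero multiples of $2N$ (e.g.\ $n=5$, $N=4$, $\deg u=4$). The argument in the paper instead exploits that in case~(b) the Hamiltonian $H^{(k)}$ is non-degenerate, so Remark~\ref{rmk:gap21} upgrades \eqref{eq:gap22} to the exact Conley--Zehnder identities \eqref{eq:index2}, $\MUCZ(\bz)=n$ and $\MUCZ(\bz')=\deg u-n$; then $|\MUCZ(\bz)-\MUCZ(\bz')|=|2n-\deg u|\in(0,2N)$ rules out any nontrivial recapping. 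Your sentence ``$|n-\deg u|<2n$ \dots combined with the index bounds keeps $\bz$ and $\bz'$ from being identified'' gestures at this but does not supply the missing step. With that replacement, the rest is sound and matches the paper.
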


\begin{proof}
The orbits $\bz$ and $\bz'$ of $H^{(k)}$ are, of course, the orbits
$\bx$ and $\by$ from Theorem \ref{thm:gap2} applied to $H^{(k)}$
treated as a one-periodic Hamiltonian. The fact that $\bz$ and $\bz'$
are indeed geometrically distinct readily follows from
\eqref{eq:gap21} and \eqref{eq:gap22}; see the proof of Theorem
\ref{thm:gap2}.
\end{proof}

\begin{Remark}
  The homological condition, \eqref{eq:hom}, imposes a strong restriction
  on the symplectic topology of $M$, even without the additional
  requirement of (a) or (b), and implies, for instance, that $M$
  is strongly uniruled in the sense of \cite{McD}.
\end{Remark}

\begin{Example} 
\label{ex:gap2}
Let us now list some of the manifolds satisfying the hypotheses of
Theorem \ref{thm:gap2}; see, e.g., \cite{MS} and references therein
for relevant calculations of the quantum homology.
\begin{itemize}
\item The complex projective spaces $\CP^n$ and complex Grassmannians
  satisfy both (a) and (b); see also Example \ref{ex:cpn} for
  $\HQ_*(\CP^n)$.

\item Assume that $M$ satisfies (a) or (b) and $P$ is symplectically
  aspherical. Then $M\times P$ satisfies (a) or, respectively, (b).

\item The product $M\times W$ of two rational manifolds satisfies (a)
whenever $M$ does and $\lambda_0(W)=m\lambda_0(M)$, where $m$ is a
positive integer or $\infty$. For instance, (a) holds for the products
$\CP^n\times \CP^{m_1}\times\ldots \CP^{m_r}$ with
$m_1+1,\ldots,m_r+1$ divisible by $n+1$ and equally normalized
symplectic structures.

\item The monotone product $\CP^n\times W$, where $W$ is monotone
  and $\gcd\big(n+1,N(W)\big)\geq 2$, satisfies (b). For instance,
  the monotone product $\CP^{n_1}\times\ldots\times\CP^{n_r}$ meets
  requirement (b) if $\gcd(n_1+1,\ldots,n_r+1)\geq 2$.

\item A not-necessarily monotone product $\CP^1\times\CP^1$ satisfies
  (b); see \cite{Os}.
\end{itemize}
\end{Example}

Theorem \ref{thm:gap2} is proved in Section \ref{sec:LS}. Although the
statement of the theorem is certainly new, the proof follows a
well-familiar path. On the conceptual level, the argument goes back to
the original work \cite{HV} (see also \cite{LT,Lu}), where the main
underlying principle that the presence of a large space of holomorphic
spheres forces the existence of periodic orbits with certain action
bounds is established. Our proof of the theorem relies on the
technique of action selectors, developed in
\cite{HZ,Oh,schwarz,Vi:gen}, and on the Hamiltonian version of the
Ljusternik--Schnirelman theory (rather than on an explicit use of
holomorphic spheres) and bears a resemblance to many an argument found
in, e.g., \cite{Fl,LO,schwarz2} and in \cite{EP1,EP2}. For $M=\CP^n$,
the theorem (with different action and index bounds) also follows from
\cite[Section~3]{EP1}.

\subsection{Augmented action}
\label{sec:augm-action}
Throughout this section, we assume that $M$ is monotone or negative
monotone with monotonicity constant $\lambda<\infty$. Let, as above,
$H$ be a one-periodic in time Hamiltonian on $M$ and let $x$ be a
one-periodic orbit of $H$.  Set
$$
\tA_H(x)=\CA_H(\bx)-\lambda \Delta_H(\bx), 
$$
where $\bx$ is the orbit $x$ equipped with an arbitrary capping. It is
clear that $\tA_H(x)$, referred to as the \emph{augmented action} in
what follows, is independent of the capping. (When $\bx$ is a
$k$-periodic orbit of a $k$-periodic Hamiltonian $H$, the augmented
action is defined in a similar fashion.)  This definition is inspired
by considerations in \cite[Section 1.6]{Sa} and \cite[Section
1.4]{EP2}, where the Conley--Zehnder index is utilized in place of the
mean index. For us, the main advantage of using the mean index is that
$\tA_H(x)$ is defined even when $x$ is degenerate and that the
augmented action is homogeneous, i.e.,
\begin{equation}
\label{eq:tA}
\tA_{H^{(k)}}(x^k)=k\tA_H(x),
\end{equation}
since $\CA_{H^{(k)}}(\bx^k)=k\CA_H(\bx)$ and
$\Delta_{H^{(k)}}(\bx^k)=k\Delta_H(\bx)$.  Furthermore, when $x$ is a
$k$-periodic orbit of a \emph{one}-periodic Hamiltonian $H$, it is
convenient to define the normalized augmented action as
$\tA_H(x)=\tA_{H^{(k)}}(x)/k$; cf.\ Section
\ref{sec:norm-spectr}. Then, by \eqref{eq:tA}, iterating an orbit does
not change the normalized augmented action:
$\tA_H(x^k)=\tA_H(x)$. Moreover, geometrically identical orbits have
equal normalized augmented action.

\begin{Corollary}
\label{cor:augm-action}
Let $M$ be monotone or negative monotone with $\lambda<\infty$ and let
$M$ and $H$ satisfy the hypotheses of Theorem \ref{thm:gap1} or
Theorem \ref{thm:gap2}.  Assume also that $H$ has finitely many
periodic orbits. Then there exist two geometrically distinct
periodic orbits $x$ and $y$ such that
$$
\tA_H(x)=\tA_H(y).
$$
\end{Corollary}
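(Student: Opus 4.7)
The plan is to feed either Corollary \ref{cor:gap1} or Corollary \ref{cor:gap2} — whichever applies — into the definition of the augmented action, and then to extract two underlying geometric orbits with coinciding normalized augmented action by a pigeonhole argument. First I would invoke the appropriate bounded-gap statement. Either way, one obtains a sequence of iterations $k_i \to \infty$ (quasi-arithmetic in the first case, simply every positive integer in the second) together with geometrically distinct capped $k_i$-periodic orbits $\bz_i$ and $\bz'_i$ whose action--index gaps
$$
(a_i, d_i) = \big(\A_{H^{(k_i)}}(\bz_i) - \A_{H^{(k_i)}}(\bz'_i),\ \Delta_{H^{(k_i)}}(\bz_i) - \Delta_{H^{(k_i)}}(\bz'_i)\big)
$$
are uniformly bounded in $i$.

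Next I would translate this into a bound on the augmented action gap. Since $\tA = \A - \lambda\Delta$ on capped orbits, one simply computes
$$
\tA_{H^{(k_i)}}(\bz_i) - \tA_{H^{(k_i)}}(\bz'_i) = a_i - \lambda d_i,
$$
so this difference is uniformly bounded in $i$ as well. Dividing by $k_i \to \infty$ and combining the homogeneity identity \eqref{eq:tA} with the definition of the normalized augmented action, we obtain
$$
\tA_H(z_i) - \tA_H(z'_i) \longrightarrow 0 \quad \text{as } i \to \infty,
$$
where $z_i$ and $z'_i$ denote the underlying uncapped geometrically distinct periodic orbits of $H$.

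Finally, the standing assumption that $H$ has only finitely many geometrically distinct periodic orbits, say $y_1, \ldots, y_m$, combined with the observation immediately following \eqref{eq:tA} that the normalized augmented action is invariant under iteration and depends only on the underlying geometric orbit, forces $\tA_H(z_i)$ and $\tA_H(z'_i)$ to lie in the finite set $\{\tA_H(y_1),\ldots,\tA_H(y_m)\}$. A pigeonhole argument then produces a pair $(y_j, y_{j'})$ with $y_j$ and $y_{j'}$ geometrically distinct (which is forced, since $\bz_i$ and $\bz'_i$ are) that arises as $(z_i, z'_i)$ for infinitely many $i$; along that subsequence $\tA_H(y_j) - \tA_H(y_{j'})$ is simultaneously a fixed real number and the limit of a sequence converging to $0$, hence it equals $0$. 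Setting $x = y_j$ and $y = y_{j'}$ gives the corollary. The only real obstacle here is conceptual rather than technical, namely verifying that $\tA_H$ descends to uncapped orbits — this is precisely where the monotone or negative monotone hypothesis with $\lambda < \infty$ enters, since under it the recapping transformations of $\A$ and $\lambda\Delta$ cancel; the rest of the argument is purely formal once the bounded-gap corollaries are in hand.
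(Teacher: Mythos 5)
Your proof is correct and follows essentially the same route as the paper: invoke Corollary \ref{cor:gap1} or \ref{cor:gap2}, observe that the augmented action gap $a_i - \lambda d_i$ is therefore bounded while the normalized version divides by $k_i \to \infty$, and then exploit the finiteness of $\PP^\infty(H)$. The only cosmetic difference is that the paper phrases this as a proof by contradiction (assume all normalized augmented actions differ, deduce the action--index gap must diverge), whereas you argue directly with a pigeonhole/limit step; the ingredients and their logical roles are identical.
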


In other words, \emph{$H$ has infinitely many periodic orbits,
provided that any two geometrically distinct orbits have different
normalized augmented actions.} The corollary easily follows from
Theorems \ref{thm:gap1} and \ref{thm:gap2}. Indeed, arguing by
contradiction, assume that $H$ has finitely many periodic orbits and
the difference $\tA_H(x)-\tA_H(y)$ is never zero. Then
$\tA_{H^{(k)}}(x^k)-\tA_{H^{(k)}}(y^k)\to \infty$ as $k\to\infty$ for
any pair of geometrically distinct periodic orbits. As a consequence,
the minimal action--index gap between geometrically distinct
$k$-periodic orbits with arbitrary cappings goes to infinity, which
contradicts Corollary \ref{cor:gap1} or \ref{cor:gap2}.

When $M=\CP^n$, we have the following much more precise result, which
is proved in Section \ref{sec:LS}, along with Theorem \ref{thm:gap2},
as a consequence of the Hamiltonian Ljusternik--Schnirelman theory.

\begin{Theorem}
\labell{thm:CPn} 
Let $H$ be a Hamiltonian on $\CP^n$ with exactly $n+1$ geometrically
distinct periodic orbits $x_0,\ldots,x_n$. Then
$$
\tA_H(x_0)=\ldots =\tA_H(x_n).
$$
\end{Theorem}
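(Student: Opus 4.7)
The plan is to sharpen the Hamiltonian Ljusternik--Schnirelman (LS) argument underlying Theorem \ref{thm:gap2}, exploiting the coincidence $\dim\HQ_*(\CP^n) = n+1$ with the orbit count. Since the normalized augmented action is homogeneous under iteration, I may replace $H$ by $H^{(k)}$ for $k$ the least common multiple of the periods of $x_0,\ldots,x_n$ and treat $H$ as a one-periodic Hamiltonian with exactly $n+1$ one-periodic orbits. Introduce the spectral invariants
\[
c_k(H):=c\bigl(u^{k}\ast[M],\,H\bigr),\qquad k=0,1,\ldots,n,
\]
where $u\in\HQ_*(\CP^n)$ is the hyperplane class, and extend the sequence via the quantum relation $u^{n+1}=q$ to $c_{n+1}(H):=c(q\ast[M],H)=c_0(H)+\lambda_0$. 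The Floer-homological LS argument (cf.\ the proof of Theorem \ref{thm:gap2}) yields $c_0(H)\leq c_1(H)\leq\cdots\leq c_n(H)<c_0(H)+\lambda_0$, with any equality forcing the common carrier to have positive LS category and hence $H$ to possess infinitely many geometrically distinct periodic orbits. Under our hypothesis all inequalities are strict.

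Because the capped action spectrum of $H$ is a disjoint union of $n+1$ cosets of $\lambda_0\Z$---one per orbit---every half-open interval of length $\lambda_0$ contains exactly one representative per orbit. Thus the $n+1$ strictly increasing values $c_0(H)<\cdots<c_n(H)$ inside $[c_0(H),\,c_0(H)+\lambda_0)$ are in bijection with the orbits: after relabeling, there exist cappings $\bar{x}_0,\ldots,\bar{x}_n$ with $\CA_H(\bar{x}_k)=c_k(H)$, and by persistence of carriers along the quantum multiplication by $u$, the wraparound selector $c_{n+1}(H)$ is carried by $\bar{x}_0\#A$, where $A\in\pi_2(\CP^n)$ is the positive generator, so $\Delta_H(\bar{x}_0\#A)=\Delta_H(\bar{x}_0)+2(n+1)$.

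Next I invoke the carrier-index window bound for spectral invariants: for a class of homological degree $d$, the carrier of $c(\cdot,H)$ has mean index within a window of width $2n$ determined by $d$. Applied to $u^k\ast[M]$ of degree $2(n-k)$, this constrains $\Delta_H(\bar{x}_k)$ to a window of width $2n$. Crucially, the recapping shift $2c_1(A)=2(n+1)=2N$ strictly exceeds $2n$, so each orbit's mean-index coset meets each window in at most one point. Combined with the cyclic closure---total action increment $\lambda_0=2\lambda(n+1)$ and total mean-index increment $2(n+1)$ across the $n+1$ consecutive steps---the strict monotonicity forces the individual step sizes to be exactly
\[
c_k(H)-c_{k-1}(H)=2\lambda,\qquad \Delta_H(\bar{x}_k)-\Delta_H(\bar{x}_{k-1})=2,
\]
for every $k=1,\ldots,n+1$ (with $\bar{x}_{n+1}:=\bar{x}_0\#A$). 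Substituting into the definition of $\tA_H$,
\[
\tA_H(x_k)=\CA_H(\bar{x}_k)-\lambda\Delta_H(\bar{x}_k)=\CA_H(\bar{x}_0)-\lambda\Delta_H(\bar{x}_0),
\]
independent of $k$, which is the conclusion.

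The main obstacle, and the technical heart of the argument, is the final rigidity step: extracting \emph{equality} of step sizes from the strict LS inequalities plus the cyclic closure. I expect this will require a local Floer homology analysis at each carrier, analogous to the one appearing in the proof of Theorem \ref{thm:gap2}, ruling out any ``extra slack'' in a single step by showing it would force a compensating mean-index jump exceeding the window bound, or produce a spurious carrier beyond the $n+1$ orbits. The essential use of $\CP^n$---beyond the hypotheses of Theorem \ref{thm:gap2}---is precisely the cyclic relation $u^{n+1}=q$ together with the strict inequality $2n<2N$, which makes the mean-index windows narrower than the recapping spacing and uniquely localizes each carrier within its orbit's coset.
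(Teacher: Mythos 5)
Your setup is on the right track: you correctly recognize that the Hamiltonian Ljusternik--Schnirelman machinery provides a bijection between the $n+1$ orbits and the spectral invariants $\s_{u^k*[M]}(H)$, and that this chain of strict inequalities, together with the quantum relation $u^{n+1}=q$, gives a cyclic ordering. However, the ``rigidity step'' that you yourself flag as the technical heart of the argument is a genuine gap, and the fix you sketch does not close it. Strict monotonicity $c_0<c_1<\cdots<c_n<c_0+\lambda_0$, together with a fixed total increment $\lambda_0$ over $n+1$ steps, does \emph{not} force the individual steps to be equal---any positive steps summing to $\lambda_0$ are consistent. Your proposed mechanism, a local-Floer-homology analysis combined with the carrier-window bound $n\leq|\Delta_H(\bar{x}_k)-\deg u_k|\leq 2n$, cannot do the job either, because that window has width $2n$ and therefore leaves a great deal of slack in the placement of each $\Delta_H(\bar{x}_k)$; it can never produce the exact equalities $c_k-c_{k-1}=2\lambda$ and $\Delta_H(\bar{x}_k)-\Delta_H(\bar{x}_{k-1})=2$ that your argument needs.

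The idea you are missing is that the paper does \emph{not} try to pin down the step sizes for a single Hamiltonian. Instead it iterates: it applies the selector-to-orbit bijection to $H^{(k)}$ for all $k$, so the index $\mu_{H^{(k)}}(\bx^k)$ (the slot of the spectral invariant $\s_{u_i}$ at which $\bx^k$ sits) is defined for every iteration, and then (using (AS10) and homogeneity of the mean index under iteration) one obtains the key asymptotic
\[
\lim_{k\to\infty}\frac{\mu_{H^{(k)}}(\bx^k)}{k}=\Delta_H(\bx).
\]
The rigidity then comes from comparing, for two orbits $\bx,\by$ and a large recapping parameter $l$, the unique ``crossing time'' $k_l$ at which the capped iterate $\by^{k_l}\#(lA)$ overtakes $\bx^{k_l}$, characterized equivalently in terms of actions and in terms of the $\mu$-indices via the coincidence of the two orderings. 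Taking $l\to\infty$ and using $k_l/l\to \lambda_0/(\CA_H(\bx)-\CA_H(\by))$ together with the asymptotic above produces exactly the linear relation
\[
\frac{\Delta_H(\by)\lambda_0}{\CA_H(\bx)-\CA_H(\by)}+2(n+1)
=\frac{\Delta_H(\bx)\lambda_0}{\CA_H(\bx)-\CA_H(\by)},
\]
and since $\lambda_0=2(n+1)\lambda$ this is precisely $\tA_H(x)=\tA_H(y)$. In other words, the rigidity is recovered only asymptotically in the iteration parameter, not from the finite data of a single Hamiltonian, and the double limit $l\to\infty$, $k_l\to\infty$ is where the hard equality comes from. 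Your proposal never passes to iterations after the initial reduction, so it has no access to this mechanism.
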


Regarding the assumptions of this theorem, recall that every
Hamiltonian on $\CP^n$ has at least $n+1$ one-periodic orbits by the
Arnold conjecture for $\CP^n$ proved in \cite{Fo,FW}; see Section
\ref{sec:LS} for further discussion. Thus, in the setting of Theorem
\ref{thm:CPn}, every periodic orbit $x_i$ is one-periodic.  On the
other hand, hypothetically, every Hamiltonian on $\CP^n$ with more
than $n+1$ one-periodic orbits necessarily has infinitely many
geometrically distinct periodic orbits.  This is a variant of the
Conley conjecture specific to $\CP^n$. (The case of $\CP^1$ is
established in \cite{FrHa}.)  Thus, if we assume this version of the
Conley conjecture, every Hamiltonian on $\CP^n$ with finitely many
periodic orbits must satisfy the requirements of
Theorem~\ref{thm:CPn}.

\begin{Example} 
\label{exam: torus-action2}
Let $M$ and $H$ be as in Example \ref{exam: torus-action} and let $M$
be monotone.  Then $\tA_H(x_i)=\tA_H(x_j)$ for any two one-periodic
orbits of $H$ on $M$. (This fact is not hard to prove using
equivariant cohomology and localization theorems; see, e.g.,
\cite[Appendix C]{GGK}.) For instance, consider the Hamiltonian
$H=c+\sum\alpha_j |z_j|^2$ on $\CP^n$ where the eigenvalues $\alpha_j$
are rationally independent and $c$ is chosen so that $H$ is
normalized: $\int H\omega^n=0$. Then, as a direct calculation shows,
$\tA_H(x_j)=0$ for all $n+1$ fixed points $x_j$.
\end{Example}

This example suggests that an analogue of Theorem \ref{thm:CPn} may
hold for any Hamiltonian $H$ with finitely many periodic orbits on a
monotone manifold or, at least, many (not just two as in Corollary
\ref{cor:augm-action}) of augmented actions $\tA_H(x_i)$ should be
equal. Another question that naturally arises is what $\tA_H(x_i)$ is
equal to in the setting of Theorem \ref{thm:CPn}. Is $\tA_H(x_i)=0$,
when $H$ is normalized, as it is for normalized quadratic Hamiltonians
on $\CP^n$?

\begin{Remark} 

  The augmented action carries nearly as much information as the pair
  $(\Delta_H,\CA_H)$ when $N<\infty$ and $\lambda<\infty$. To
  visualize the relation between the two invariants, let us fix a
  periodic orbit $\bx$. Consider the sequence of points
  $(\Delta_H(\bx\# A),\CA_H(\bx\# A))$, where $A\in \pi_2(M)$, on the
  $(\Delta_H,\CA_H)$-plane.  This sequence lies on the line with slope
  $\lambda$, intersecting the $\CA_H$-axis at $\tA_H(x)$, and the
  adjacent points differ by the vector $(2N,\lambda_0)$. Then, the
  action--index gaps are vectors connecting points from sequences
  corresponding to geometrically distinct periodic orbits of the same
  period. Corollary \ref{cor:augm-action} asserts that after passing
  to a sufficiently high iteration of $H$ at least two sequences lie
  on the same line, when $H$ has finitely many periodic
  orbits. Furthermore, in the setting of Theorem \ref{thm:CPn}, all
  $n+1$ sequences lie on the same line.

\end{Remark}

\subsection{Symplectically non-degenerate maxima}
\label{sec:sdm}
All of the theorems stated above, with the exception of Theorems
\ref{thm:gap2} and \ref{thm:CPn}, rely on the automatic existence of
periodic orbits in the presence of symplectically degenerate
maxima. This phenomenon was essentially discovered in \cite{Hi} and
then further investigated and used in \cite{Gi:conley,GG:gap}.

\begin{Definition}
\label{def:sdm}
An isolated capped periodic orbit $\bx$ of a Hamiltonian $H$ is said
to be a \emph{symplectically degenerate maximum} of $H$ if
$\Delta_H(\bx)=0$ and $\HF_{2n}(H,\bx)\neq 0$.
\end{Definition}

Here, $\HF_*(H,\bx)$ stands for the local Floer homology of $H$ at
$\bx$, graded throughout the paper so that a $C^2$-small
non-degenerate autonomous maximum has degree $2n$.  We refer the
reader to \cite{GG:gap} for a detailed study of symplectically
degenerate maxima and to Section \ref{sec:LFH} for a discussion of the
local Floer homology.  At this point we only note that in Definition
\ref{def:sdm} the condition that $\Delta_H(\bx)=0$ can be replaced by
the requirement that $x$ be totally degenerate and that $H$ and $\bx$
are assumed to have the same period.

\begin{Example}
  Let $\bx$ be an isolated totally degenerate maximum, equipped with
  trivial capping, of an autonomous Hamiltonian. Then $\bx$ is a
  symplectically degenerate maximum.
\end{Example}

The following key result is proved in Section \ref{sec:p-o-sdm} in the
present form and in \cite{Gi:conley} in the case where $M$ is
symplectically aspherical.

\begin{Theorem}
\labell{thm:sdm}
Assume that $(M^{2n},\omega)$ is weakly monotone and rational, and let
$\bx$ be a symplectically degenerate maximum of $H$. Set
$c=\A_H(\bx)$.  Then for every sufficiently small $\eps>0$ there
exists $k_\eps$ such that
$$
\HF^{(kc+\delta_k,\,kc+\eps)}_{2n+1}\big(H^{(k)}\big)\neq 0\text{ for
all $k>k_\eps$ and some $\delta_k$ with $0<\delta_k<\eps$.}
$$
\end{Theorem}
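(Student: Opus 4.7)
The plan is to follow the strategy of \cite{Gi:conley} from the symplectically aspherical case and to explain carefully how the rationality and weak monotonicity of $M$ allow the argument to go through when spheres can carry non-zero symplectic area and Chern number. First, I would use the hypotheses $\Delta_H(\bx)=0$ and $\HF_{2n}(H,\bx)\neq 0$ to put $H$ into a convenient local normal form near $\bx$. By the analysis of local Floer homology in \cite{GG:gap}, an SDM orbit behaves locally like a totally degenerate strict maximum of an autonomous function: after a symplectic change of coordinates and a small perturbation not affecting the local Floer homology, I may assume that $H(t,\cdot)$ is $C^2$-close, in a Darboux ball $B$ around $x(0)$, to an autonomous function with a totally degenerate strict maximum of value $c$ at the origin. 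In particular, $H$ can be sandwiched between two bump Hamiltonians $H_-\leq H\leq H_+$ supported in $B$, both still carrying $\bx$ as a constant one-periodic orbit of action $c$.

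Second, for each sufficiently small $\eps>0$ I would build, entirely inside the Darboux ball, a radial model Hamiltonian $K_\eps=f_\eps(\|z\|^2)$ whose profile has a totally degenerate maximum of height comparable to $\eps$ at the origin and decays monotonically with a single critical slope in a range tuned to $k$. A direct local computation, essentially the one carried out in \cite{Gi:conley}, shows that for all sufficiently large $k$ the iterate $K_\eps^{(k)}$ has an isolated Morse--Bott family of non-constant $k$-periodic orbits on a small sphere of some radius $r_\eps$, whose contribution generates a non-zero class in $\HF_{2n+1}$ living in an action window of the form $(kc+\delta_k,\,kc+\eps)$ for some $0<\delta_k<\eps$. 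This computation is purely local and therefore insensitive to the global topology of $M$.

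Third, I would transfer this class to $H^{(k)}$ by a monotone homotopy built from the sandwich $H_-\leq H\leq H_+$ and the pointwise relation between $K_\eps$ and $H_\pm$. The resulting filtered continuation map sends the non-zero class in $\HF_{2n+1}(K_\eps^{(k)})$ into $\HF^{(kc+\delta_k,\,kc+\eps)}_{2n+1}(H^{(k)})$, and non-triviality of the transferred class follows from the invariance of local Floer homology under small perturbations, which allows one to identify the local contribution of $\bx^k$ to $H^{(k)}$ with that of the model maximum of $K_\eps^{(k)}$, combined with the iteration-invariance of local Floer homology at an SDM established in \cite{GG:gap}.

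The main obstacle, and the place where the new hypotheses on $M$ really enter, is ruling out cancellations or spurious contributions in the narrow window coming from recappings. In the rational setting, any capped orbit $\by\#A$ with $A\in\pi_2(M)$ has action differing from $\by$ by $\langle\omega,A\rangle\in\lambda_0\Z$, so by choosing $\eps<\lambda_0$ from the outset I guarantee that no additional capping of any orbit lies in the window $(kc+\delta_k,\,kc+\eps)$; in particular, the transferred class cannot be cancelled by recappings of $\bx^k$ or absorbed into contributions from other critical orbits. Weak monotonicity is used to ensure that the Floer differential and the continuation maps are well-defined of the expected index, with no sphere bubbling in the relevant dimensions $2n$ and $2n+1$. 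The technically delicate point is calibrating the bump $K_\eps$ and the homotopy so that the model class lands in the prescribed window for \emph{every} large $k$ simultaneously; this requires the bump height and the critical slope of $f_\eps$ to scale appropriately with $\eps$, and is handled as in \cite{Gi:conley} by exploiting that the total iteration effect on the action of the Morse--Bott family grows linearly in $k$ while stays trapped just above $kc$.
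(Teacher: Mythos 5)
Your overall strategy is the right one and mirrors the paper's: reduce to a genuine local maximum via the geometrical characterization of an SDM, sandwich $H$ between model bump Hamiltonians $H_-\leq H\leq H_+$ built in a Darboux ball, compute the relevant action-window Floer homology of the model functions, and transfer via a monotone homotopy map that factors through $\HF(H^{(k)})$. But there is a genuine gap at the step you dispose of with the phrase ``this computation is purely local and therefore insensitive to the global topology of $M$.'' That assertion is exactly the non-trivial technical point of the proof and does not come for free. Filtered Floer homology $\HF^{(a,\,b)}_*(F)$ is a global invariant: even when $F$ is supported in a ball $U$, the Floer differential can a priori involve trajectories leaving $U$, and the continuation maps between $H_+^{(k)}$ and $H_-^{(k)}$ can a priori involve holomorphic cylinders wandering through all of $M$. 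Your recapping observation -- that $\eps<\lambda_0$ prevents two different cappings of the same orbit from both lying in the window -- does not address this, because the worry is not that a single orbit appears twice in the window, but that the differential and continuation maps between capped orbits with cappings \emph{inside} $U$ are contaminated by interactions with cappings that escape $U$.

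The paper closes this gap with the direct sum decomposition of filtered Floer homology (Lemma \ref{lemma:direct-sum} and Propositions \ref{prop:direct-sum}, \ref{prop:direct-sum-independence}). The key point is a Gromov compactness estimate on the shell $\bar W\setminus U$: there is a constant $\eps(U,W,M)>0$ such that any holomorphic curve crossing the shell carries energy at least $\eps(U,W,M)$. Consequently, once the action window is shorter than $\eps(U,W,M)$, the subcomplex $\CF^{(a,\,b)}_*(F;U)$ generated by orbits whose cappings stay in $U$ is a direct summand of $\CF^{(a,\,b)}_*(F)$, preserved by the differential, the long exact sequence, and monotone decreasing homotopy maps. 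The homology of that summand is then genuinely independent of the ambient manifold and can be identified with the corresponding Floer homology on $\R^{2n}$, which is where the explicit calculation from \cite{GG:duke} applies. Without a statement of this kind, your transfer argument is incomplete. Separately, your description of $H_-$ glosses over the role of the extended loops $\eta_i$: the paper constructs $H_-$ as $G^0\#F$ for a bump $F$ adapted to a coordinate system in which the Hessian of $K^i=\eta_i^{-1}\#H$ is small, and then uses an \emph{isospectral} homotopy from $H_-^{(k)}$ to $F^{(k)}$ to make the final diagram commute; this is needed because $H_-$ itself is not a function of $\|z\|^2$ and the homotopy from $H_+$ to $H_-$ need not pointwise dominate $H_-$ along the way.
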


Using Theorem \ref{thm:sdm}, it is not hard to show that $H$ has
infinitely many geometrically distinct periodic orbits whenever it has
a symplectically degenerate maximum. More precisely, in Section
\ref{sec:conley-pfs}, we will prove the following

\begin{Theorem}
\labell{thm:sdm-conley}
Let $H$ be a one-periodic Hamiltonian on a closed, weakly monotone and
rational manifold $M$.

\begin{itemize}

\item[(i)] Assume that some iteration $H^{(k_0)}$ has finitely many
$k_0$-periodic orbits and also has a symplectically non-degenerate
maximum. Then $H$ has infinitely many geometrically distinct periodic
orbits.

\item[(ii)] If, in addition, $k_0=1$ and $\omega\mid_{\pi_2(M)}=0$ or
$c_1(M)\mid_{\pi_2(M)}=0$, the Hamiltonian $H$ has simple periodic
orbits of arbitrarily large period.

\end{itemize}
\end{Theorem}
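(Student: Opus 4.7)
The plan is to pit Theorem \ref{thm:sdm} against the hypothesis of a finite periodic-orbit count, reducing matters to a Diophantine/equidistribution problem.

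For part (i), argue by contradiction: suppose $H$ has only finitely many geometrically distinct periodic orbits $x_1, \ldots, x_m$ with minimal periods $k_1, \ldots, k_m$. The symplectically degenerate maximum $\bx$ of $H^{(k_0)}$ is then, up to iteration and recapping, an iterate of one of the $x_i$. Setting $c := \A_{H^{(k_0)}}(\bx)$ and applying Theorem \ref{thm:sdm} to $H^{(k_0)}$ yields
$$
\HF^{(kc+\delta_k,\,kc+\eps)}_{2n+1}\bigl(H^{(k_0k)}\bigr) \neq 0
$$
for every small $\eps > 0$ and all $k > k_\eps$. Every generator of this filtered complex is a recapped iterate of some $x_i$ with $k_i \mid k_0 k$, and rationality of $M$ constrains its action to the coset $(k_0k/k_i)\A_H(\bx_i) + \lambda_0\Z$. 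Writing $b_i := k_0 \A_H(\bx_i)/k_i - c$, nonvanishing forces
$$
k b_i \bmod \lambda_0 \in (\delta_k, \eps) \quad \text{for some } i \text{ with } k_i \mid k_0 k.
$$

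A Kronecker--Weyl equidistribution argument now closes (i). Restrict $k$ to multiples of $L_0 := \mathrm{lcm}(k_1,\ldots,k_m)/\gcd(k_0, \mathrm{lcm}(k_1,\ldots,k_m))$ so that every $i$ is relevant. The orbit $k \mapsto (kb_1, \ldots, kb_m) \bmod \lambda_0$ equidistributes on its closure (a closed subgroup) in $(\R/\lambda_0\Z)^m$, and the subset where some coordinate lies in $(0, \eps)$ has Haar measure at most $m\eps/\lambda_0$. Taking $\eps < \lambda_0/m$ produces arbitrarily large $k$ in this progression with $kb_i \bmod \lambda_0 \notin (0, \eps)$ for every $i$, contradicting the displayed window nonvanishing.

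For part (ii), assume additionally $k_0 = 1$ and either $\omega|_{\pi_2(M)} = 0$ or $c_1(M)|_{\pi_2(M)} = 0$, and suppose for contradiction that every simple periodic orbit of $H$ has minimal period at most some $K$. For $k$ coprime to $L := \mathrm{lcm}(2,\ldots,K)$, every $k$-periodic orbit of $H$ must be an iterate of a fixed point, since any simple $l$-periodic orbit contributing with $2 \le l \le K$ would force $l \mid \gcd(k, L) = 1$, impossible. Since the $k_0=1$ hypothesis of (i) already yields only finitely many fixed points $y_1, \ldots, y_n$, the argument of (i) specializes with $b_j := \A_H(\by_j) - c$; the Kronecker--Weyl step applied within the arithmetic progression of $k$ coprime to $L$ (of positive density by Dirichlet) again produces arbitrarily large such $k$ for which the Diophantine obstruction fails. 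In the aspherical case $\lambda_0 = \infty$, this degenerates to the elementary observation that $k b_j \notin (\delta_k,\eps)$ for large $k$ whenever $b_j \neq 0$, while $k b_j = 0 \notin (\delta_k,\eps)$ when $b_j = 0$.

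The main obstacle is the Diophantine/equidistribution step: the ``good'' set of $k$ must remain infinite after intersection with the divisibility constraint $L_0 \mid k$ in (i) or the coprimality constraint $\gcd(k,L) = 1$ in (ii), which demands care when several $b_i/\lambda_0$ sit in rationally related positions and the orbit closure in $(\R/\lambda_0\Z)^m$ collapses to a proper subtorus on which equidistribution must still be exploited. Everything else is bookkeeping, since the nonvanishing of $\HF_{2n+1}$ in Theorem \ref{thm:sdm} is used only to produce some generator with action in the prescribed window, and the contradiction depends purely on action values.
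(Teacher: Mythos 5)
Your part (i) is essentially the paper's own argument modulo packaging: both apply Theorem \ref{thm:sdm}, separate action values rational from irrational modulo $\lambda_0$, and use equidistribution to find arbitrarily large $k$ with no iterated-orbit action in $(kc+\delta_k,\,kc+\eps)$. The paper passes through a clean intermediate statement (Proposition \ref{prop:sdm-conley2}, non-stabilization of $\CS_{\lambda_0}(H^{(k)})$) and then uses a direct union-bound density estimate with thresholds $\eps$ small relative to $1/d$ and $1/r$; you instead run Kronecker--Weyl on a subtorus of $(\R/\lambda_0\Z)^m$, which is the same estimate, and the ``orbit-closure collapse'' concern you flag is a non-issue: rational $b_i$ have finite coordinate projections and contribute zero Haar measure to the bad set for $\eps$ small, so the union bound applies uniformly. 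Part (ii) is where you genuinely diverge. The paper works with \emph{prime} iterations $k$ and the \emph{mean index} spectrum: since $c_1\mid_{\pi_2(M)}=0$ makes $\Delta$ capping-independent, Theorem \ref{thm:sdm} together with (LF2) and (LF5) produces a $k$-periodic orbit $y$ with $\Delta_{H^{(k)}}(y)\in[1,\,2n+1]$, and for $k$ prime and large this rules out $y$ being the $k$-th iterate of a fixed point $z$, because $\Delta_{H^{(k)}}(z^k)=k\Delta_H(z)$ is either identically $0$ or diverges---so $y$ is simple, with no equidistribution needed. You instead take $k$ coprime to $L=\mathrm{lcm}(2,\ldots,K)$ and rerun an action-equidistribution argument on the iterated fixed points. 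That does work, but it is markedly heavier and introduces the extra task of verifying equidistribution of $\{kb_j\bmod\lambda_0\}$ along a coprimality-constrained progression (which is elementary via Weyl's criterion applied to $Lb_j$, not Dirichlet's theorem, which concerns primes); the paper's observation that the mean index grows linearly under iteration delivers the same dichotomy for free and is the cleaner route.
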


\begin{Remark}
  Note that the assumption that $M$ is rational plays a purely
  technical role in the proof of Theorem \ref{thm:sdm} and, perhaps,
  can be eliminated entirely. This requirement, however, is also used
  in other ways in the proof of Theorem \ref{thm:N/n} and of Theorem
  \ref{thm:sdm-conley}, except the case where
  $c_1(M)\mid_{\pi_2(M)}=0$. In particular, the rationality
  requirement enters the proof of Theorem \ref{thm:conley} only via
  Theorem \ref{thm:sdm}. We will further discuss these results in
  Section \ref{sec:conley-pfs}.
\end{Remark}

\subsection{Organization of the paper} 
\label{sec:org}
The role of the next section, Section \ref{sec:prelim}, is purely
technical: here we set conventions and notation and recall relevant
results concerning filtered and local Floer homology, quantum
homology, homotopy maps, the mean index, and loops of Hamiltonian
diffeomorphisms. This section is intended for quick reference rather
than for ``linear reading''. The main results of the paper are proved
in the following four sections, which are essentially independent of
each other.  In Section \ref{sec:conley-pfs}, we establish Theorems
\ref{thm:conley}, \ref{thm:N/n}, and \ref{thm:sdm-conley}, along with
some auxiliary results on the action and index spectra. Theorem
\ref{thm:gap1} is proved in Section \ref{sec:gap-pfs}, following
closely \cite{GG:gap}. In Section \ref{sec:p-o-sdm}, we prove Theorem
\ref{thm:sdm}, which the previous results rely on in the case where
the Hamiltonian has a symplectically degenerate maximum. This proof
utilizes a direct sum decomposition of the filtered Floer homology for
small action intervals, which reduces the problem to the case of a
symplectically aspherical manifold (or even $\R^{2n}$), treated in
\cite{GG:duke,GG:gap}, and is of independent interest.  Finally, in
Section \ref{sec:LS}, we discuss the technique of action selectors and
the Hamiltonian Ljusternik--Schnirelman theory and, after a brief
digression into the degenerate case of the Arnold conjecture, prove
Theorems \ref{thm:gap2} and \ref{thm:CPn}.

\subsection{Acknowledgments} The authors are grateful to Dusa McDuff
for useful discussions.

\section{Preliminaries}
\label{sec:prelim}
The goal of this section is to set notation and conventions and to
give a brief review of Floer and quantum homology and several other
notions used in the paper.  Most of this material is quite standard
and the only new results discussed here concern extensions of loops of
Hamiltonian diffeomorphisms; see Section \ref{sec:loops}.

\subsection{Conventions and basic definitions}
\label{sec:conv}

The objective of this subsection is to set terminology and conventions
and recall basic definitions.

\subsubsection{Symplectic manifolds and Hamiltonian flows}
Let $(M,\omega)$ be a symplectic manifold.  Recall that $M$ is said to
be \emph{monotone (negative monotone)} if
$[\omega]\mid_{\pi_2(M)}=\lambda c_1(M)\!\mid_{\pi_2(M)}$ for some
non-negative (respectively, negative) constant $\lambda$ and $M$ is
\emph{rational} if $\left<[\omega], {\pi_2(M)}\right>=\lambda_0\Z$,
i.e., the integrals of $\omega$ over spheres in $M$ form a discrete
subgroup of $\R$. (When $\left<[\omega], {\pi_2(M)}\right>=0$, we set
$\lambda_0=\infty$.)  The constants $\lambda$ and $\lambda_0\geq 0$
will be referred to as the \emph{monotonicity and rationality constants}.
The positive generator $N$ of the discrete subgroup
$\left<c_1(M),\pi_2(M)\right>\subset \R$ is called the \emph{minimal
  Chern number} of $M$. When this subgroup is zero, we set $N=\infty$.
The manifold $M$ is called \emph{symplectically aspherical} if
$c_1(M)\mid_{\pi_2(M)} =0=[\omega]\mid_{\pi_2(M)}$. A symplectically
aspherical manifold is monotone and a monotone or negative monotone
manifold is rational.

To ensure that the standard construction of Floer homology applies,
all symplectic manifolds $(M,\omega)$ are required, throughout the
paper, to be \emph{weakly monotone}, i.e., monotone or 
$N\geq n-2$ (including the case
$c_1(M)\!\mid_{\pi_2(M)}=0$). Utilizing the machinery of virtual
cycles, one can eliminate this requirement in Theorems
\ref{thm:conley}, \ref{thm:N/n}, \ref{thm:gap1}, \ref{thm:sdm}, and
\ref{thm:sdm-conley}.

Furthermore, all Hamiltonians $H$ on $M$ considered in this paper are
assumed to be $k$-periodic in time, i.e., $H\colon S^1_k\times
M\to\R$, where $S^1_k=\R/k\Z$, and the period $k$ is always a positive
integer.  When the period is not specified, it is equal to one, which
is the default period in this paper. We set $H_t = H(t,\cdot)$ for
$t\in S^1=\R/\Z$. The Hamiltonian vector field $X_H$ of $H$ is defined
by $i_{X_H}\omega=-dH$. The (time-dependent) flow of $X_H$ will be
denoted by $\varphi_H^t$ and its time-one map by $\varphi_H$. Such
time-one maps are referred to as \emph{Hamiltonian diffeomorphisms}. A
Hamiltonian $H$ is said to be \emph{normalized} if $\int_M H_t
\omega^n=0$ for all $t\in S^1$. Below, in contrast with, say,
\cite{schwarz}, the Hamiltonians are not by default assumed to be
normalized.

A one-periodic Hamiltonian $H$ can always be treated as
$k$-periodic. In this case, we will use the notation $H^{(k)}$ and,
abusing terminology, call $H^{(k)}$ the $k$th iteration of
$H$. Incorporating the period $k$ into the notation is often redundant
and awkward, but it does eliminate any ambiguity and is convenient
when a Hamiltonian or an orbit can be viewed as $k$-periodic for
different periods $k$.

The Hofer norm of a $k$-periodic Hamiltonian $H$ is defined by
$$
\| H\|=\int_0^k (\max_M H_t-\min_M H_t)\,dt.
$$

Let $K$ and $H$ be two one-periodic Hamiltonians. The composition $K\#
H$ is the Hamiltonian
$$
(K\# H)_t=K_t+H_t\circ(\varphi^t_K)^{-1}
$$
generating the flow $\varphi^t_K\circ \varphi^t_H$. In general,
$K\# H$ is not one-periodic. However, this is the case if, for
example, $H_0\equiv 0\equiv H_1$. The latter condition can be met by
reparametrizing the Hamiltonian as a function of time without changing
the time-one map. Thus, in what follows, we will usually treat $K\# H$
as a one-periodic Hamiltonian.  Another instance when the composition
$K\# H$ of two one-periodic Hamiltonians is automatically one-periodic
is when the flow $\varphi^t_K$ is a loop of Hamiltonian
diffeomorphisms, i.e., $\varphi_K^1=\id$.  We set $H^{\# k}=H\#\ldots
\# H$ ($k$ times). The flow $\varphi^t_{H^{\# k}}=(\varphi_H^t)^k$,
$t\in [0,\,1]$, is homotopic with fixed end-points to the flow
$\varphi^t_H$, $t\in [0,\, k]$. Note also that $\| H^{(k)}\|=k\|H\|=\|H^{\# k}\|$.

\subsubsection{Capped periodic orbits}
\label{sec:capped-per-orb}
Let $x\colon S^1_k\to W$ be a contractible loop. A \emph{capping} of
$x$ is a map $u\colon D^2\to M$ such that $u\mid_{S^1_k}=x$. Two
cappings $u$ and $v$ of $x$ are considered to be equivalent if the
integrals of $\omega$ and $c_1(M)$ over the sphere obtained by
attaching $u$ to $v$ are both equal to zero. For instance, when $M$ is
symplectically aspherical, all cappings of $x$ are equivalent to each
other. A capped closed curve $\bar{x}$ is, by definition, a closed
curve $x$ equipped with an equivalence class of capping. In what
follows, the presence of capping is always indicated by a bar. We
denote by $\PP(H)$ the collection of all one-periodic orbits of $H$
and by $\bPP(H)$ the collection of its capped one-periodic orbits.

The action of a one-periodic Hamiltonian $H$ on a capped closed curve
$\bar{x}=(x,u)$ is defined by
$$
\CA_H(\bar{x})=-\int_u\omega+\int_{S^1} H_t(x(t))\,dt.
$$
The space of capped closed curves is a covering space of the space of
contractible loops and the critical points of $\CA_H$ on the former
space are exactly one-periodic orbits of $X_H$. The \emph{action
spectrum} $\CS(H)$ of $H$ is the set of critical values of
$\CA_H$. This is a zero measure set. When $M$ is rational, $\CS(H)$ is
a closed, and hence nowhere dense, set. Otherwise, $\CS(H)$ is
everywhere dense. Furthermore, when $M$ is rational, the action
$\CA_H(\bx)$, viewed modulo $\lambda_0$, is independent of the
capping.  We denote by $\CS_{\lambda_0}(H)\subset
S^1_{\lambda_0}=\R/\lambda_0\Z$ the action spectrum modulo
$\lambda_0$. These definitions extend to $k$-periodic orbits and
Hamiltonians in an obvious way.

In this paper, we are only concerned with contractible periodic orbits and
\emph{a periodic orbit is always assumed to be contractible, even if
this is not explicitly stated}.

A (capped) periodic orbit $\bar{x}$ of $H$ is \emph{non-degenerate} if
the linearized return map $d\varphi_H \colon T_{x(0)}W\to T_{x(0)}W$
has no eigenvalues equal to one. Following \cite{SZ}, we call $x$
\emph{weakly non-degenerate} if at least one of the eigenvalues is
different from one. Otherwise, the orbit
is said to be \emph{strongly degenerate}. Clearly, capping has no
effect on degeneracy or non-degeneracy of $\bar{x}$. A Hamiltonian is
non-degenerate if all its one-periodic orbits are non-degenerate.

Let $\bar{x}$ be a non-degenerate (capped) periodic orbit.  The
\emph{Conley--Zehnder index} $\MUCZ(\bar{x})\in\Z$ is defined, up to a
sign, as in \cite{Sa,SZ}. More specifically, in this paper, the
Conley--Zehnder index is the negative of that in \cite{Sa}. In other
words, we normalize $\MUCZ$ so that $\MUCZ(\bar{x})=n$ when $x$ is a
non-degenerate maximum (with trivial capping) of an autonomous
Hamiltonian with small Hessian. Sometimes, we will also use the notation
$\MUCZ(H,\bx)$.

As is well-known, the fixed points of $\varphi_H :=\varphi_H^1$ are in
one-to-one correspondence with (not-necessarily contractible)
one-periodic orbits of $H$. Likewise, the $k$-periodic points of
$\varphi_H$, i.e., the fixed points of $\varphi_H^k$, are in
one-to-one correspondence with (not-necessarily contractible)
$k$-periodic orbits of $H$. The $k$th iteration of a one-periodic
orbit $x$ of $H$ is the orbit $x(t)$, where $t$ now ranges in
$[0,\,k]$. It is easy to see that the iteration $\bx^k$ of a capped
orbit $\bx$ carries a natural capping. The action functional is
homogeneous with respect to iteration:
$$
\CA_{H^{(k)}}(\bx^k)=k\CA_H(\bx).
$$
Two periodic orbits $x$ and $y$ with possibly different periods are
said to be \emph{geometrically distinct} if the sets of points
$\varphi_H^i(x(0))=x(i)$ and $\varphi_H^j(y(0))=y(j)$ are distinct,
i.e., if the corresponding periodic orbits of $\varphi_H$ are
geometrically distinct.  Otherwise, we call the orbits
\emph{geometrically identical or equivalent}.  These notions extend to
capped orbits via forgetting the cappings. We will denote the set of
geometrically distinct periodic orbits of $H$ (or, more precisely, the
set of geometrical equivalence classes) by $\PP^\infty(H)$. The set of
such orbits of period less than or equal to $k$ will be denoted by
$\PP_k(H)$. It is worth emphasizing that two $k$-periodic orbits $x$
and $y$ of $H$, with $k>1$, can be distinct (i.e., $x\neq y$) but
geometrically equivalent.  The capped orbits obtained by capping an
orbit $x$ in all possible ways are geometrically identical to each
other and an orbit $x$ and its iteration $x^k$ are geometrically
identical. Two one-periodic orbits $x$ and $y$ are geometrically
distinct if and only if $x(0)\neq y(0)$.  Let $\bx$ and $\by$ be $k$-
and, respectively, $l$-periodic orbits which are geometrically
identical and $M$ is rational. Then
\begin{equation}
\label{eq:g-i1}
\frac{\CA_{H^{(k)}}(\bx)}{k}\equiv \frac{\CA_{H^{(l)}}(\by)}{l}\mod \lambda_0\Q.
\end{equation}
The converse, of course, is not true.

There is a natural action and index preserving one-to-one
correspondence between capped $k$-periodic orbits of $H$ and capped
one-periodic orbits of $H^{\# k}$. Thus, when convenient, we will
treat $\bx^k$ as a one-periodic orbit of $H^{\# k}$.

\subsection{Floer homology and quantum homology}
In this section, we review the construction and basic properties of Floer
homology, mainly to further specify notation and conventions.

Recall that when $M$ is closed and weakly monotone, the filtered Floer
homology of $H\colon S^1\times M\to \R$ for the interval $(a,\, b)$,
denoted throughout the paper by $\HF^{(a,\, b)}_*(H)$, is defined. We
refer the reader to Floer's papers
\cite{F:Morse,F:grad,F:c-l,F:witten} or to, e.g., \cite{HS,MS,Sa} for
further references and introductory accounts of the construction of
(Hamiltonian) Floer homology. The terminology, conventions, and most of
the notation used here are similar to those in
\cite{Gi:coiso,Gi:conley,Gu}. Note however that now
we grade the Floer complex and homology by $\MUCZ+n$. For instance, a
non-degenerate maximum of an autonomous Hamiltonian with small
eigenvalues has degree $2n$. Thus, we have a grading
preserving isomorphism between the total Floer homology $\HF_*(H)$ and
the quantum homology $\HQ_*(M)$.

Fix a ground field $\F$, e.g., $\Z_2$ or $\C$ or $\Q$. Throughout the
paper, $H_*(M)$ denotes the homology $H_*(M;\F)$ with
coefficients in $\F$ and the Floer and quantum homology groups are also
taken over the field $\F$.

\subsubsection{Floer homology}
\label{sec:Floer}
Let $H$ be a non-degenerate Hamiltonian on $M$.
Denote by $\CF^{(-\infty,\, b)}_k(H)$, where $b\in
(-\infty,\,\infty]$ is not in $\CS(H)$, the vector space of formal
sums 
\begin{equation}
\labell{eq:floer-complex}
\sum_{\bar{x}\in \bPP(H)} \alpha_{\bar{x}}\bar{x}. 
\end{equation}
Here $\alpha_{\bar{x}}\in\F$ and $\MUCZ(\bar{x})=k-n$ and
$\CA(\bar{x})<b$. Furthermore, we require, for every $a\in \R$, the
number of terms in this sum with $\alpha_{\bar{x}}\neq 0$ and
$\CA(\bar{x})>a$ to be finite. The graded $\F$-vector space
$\CF^{(-\infty,\, b)}_*(H)$ is endowed with the Floer differential
counting the anti-gradient trajectories of the action functional in
the standard way once a (time-dependent) almost complex structure
compatible with $\omega$ is fixed and the regularity requirements are
satisfied; see, e.g., \cite{HS,MS,Sa}. Thus, we obtain a filtration of
the total Floer complex $\CF_*(H):=\CF^{(-\infty,\,
\infty)}_*(H)$. Furthermore, we set $\CF^{(a,\,
b)}_*(H):=\CF^{(-\infty,\, b)}_*(H)/\CF^{(-\infty,\,a)}_*(H)$, where
$-\infty\leq a<b\leq\infty$ are not in $\CS(H)$. The resulting
homology, the \emph{filtered Floer homology} of $H$, is denoted by
$\HF^{(a,\, b)}_*(H)$ and by $\HF_*(H)$ when
$(a,\,b)=(-\infty,\,\infty)$.

This construction extends to all, not necessarily non-degenerate,
Hamiltonians by continuity. Here, for the sake of simplicity, we
assume that $M$ is rational. Let $H$ be an arbitrary (one-periodic in
time) Hamiltonian on $M$ and let the end points $a$ and $b$ of the
action interval be outside $\CS(H)$. By definition, we set
\begin{equation}
\label{eq:floer-hom}
\HF^{(a,\, b)}_*(H)=\HF^{(a,\, b)}_*(\tH),
\end{equation}
where $\tH$ is a non-degenerate, small perturbation of $H$. It is well
known that the right hand side in \eqref{eq:floer-hom} is independent
of $\tH$ as long as the latter is sufficiently close to $H$. (It is
essential that $a$ and $b$ are not in $\CS(H)$ and $M$ is rational.)
Working with filtered Floer homology, \emph{we will always assume that
the end points of the action interval are not in the action spectrum
and, if $H$ is not non-degenerate, the ambient manifold $M$ is
rational.}

When $a<b<c$, we have $\CF_*^{(b,\,c)}(H)=\CF_*^{(a,\,c)}(H)/\CF_*^{(a,\,b)}(H)$, and thus
obtain the long exact sequence
$$
\ldots\to\HF^{(a,\, b)}_*(H)\to \HF^{(a,\, c)}_*(H)\to \HF^{(b,\, c)}_*(H)\to\ldots\, .
$$

The total Floer complex and homology are modules over the
\emph{Novikov ring} $\Lambda$. In this paper, the latter is defined as
follows. Let $\omega(A)$ and $\left<c_1(M),A\right>$ denote the
integrals of $\omega$ and, respectively, $c_1(M)$ over a cycle $A$. 
Set
$$
I_\omega(A)=-\omega(A)\text{ and }
I_{c_1}(A)=-2\left<c_1(M), A\right>,
$$
where $A\in\pi_2(M)$, and
$$
\Gamma=\frac{\pi_2(M)}{\ker I_\omega\cap \ker I_{c_1}}.
$$
Thus, $\Gamma$ is the quotient of $\pi_2(M)$ by the equivalence
relation where the two spheres $A$ and $A'$ are considered to be
equivalent if $\omega(A)=\omega(A')$ and $\left<c_1(M),
A\right>=\left<c_1(M), A'\right>$. The homomorphisms $I_\omega$ and
$I_{c_1}$ defined originally on $\pi_2(M)$ descend to $\Gamma$.  (With
this convention $I_\omega(A)$ and $I_{c_1}(A)$ have the same sign when
$M$ is monotone and opposite signs when $M$ is negative monotone.)
The group $\Gamma$ acts on $\CF_*(H)$ and on $\HF_*(H)$ via recapping:
an element $A\in \Gamma$ acts on a capped one-periodic orbit $\bar{x}$
of $H$ by attaching the sphere $A$ to the original capping. We denote
the resulting capped orbit by $\bx\# A$. Then,
$$
\deg(\bx\# A)=\deg(\bx)+ I_{c_1}(A)
\text{ and }
\CA_H(\bx\# A)=\CA_H(\bx)+I_\omega(A).
$$
The Novikov ring $\Lambda$ is a certain completion of the group ring
of $\Gamma$. Namely, $\Lambda$ is comprised of formal linear
combinations $\sum \alpha_A e^A$, where $\alpha_A\in\F$ and $A\in
\Gamma$, such that for every $a\in \R$ the sum contains only finitely
many terms with $I_\omega(A) > a$ and, of course, $\alpha_A\neq
0$. (The appearance of $e^A$ rather than just $A$ is due to the fact
that we use addition to denote the product in $\Gamma$ and
multiplication to denote the product in $\Lambda$.) The Novikov ring
$\Lambda$ is graded by setting $\deg(e^A)=I_{c_1}(A)$ for
$A\in\Gamma$.  The action of $\Gamma$ turns $\CF_*(H)$ and $\HF_*(H)$
into $\Lambda$-modules. Note that in general this action is defined
only on the total Floer complex and homology and does not extend to
their filtered counterparts.

\begin{Example}
\label{ex:Novikov}
Assume that $M$ is monotone or negative monotone or rational with
$N=\infty$ (i.e., $c_1(M)\mid_{\pi_2(M)}=0$) and $\lambda_0<\infty$
(i.e., $\omega\mid_{\pi_2(M)}\neq 0$).  Then $\Gamma\cong \Z$ with the
generator $A$ such that $I_\omega(A)=\lambda_0>0$ (i.e.,
$\omega(A)=-\lambda_0$) and $I_{c_1}(A)=\pm 2N$ when $N<\infty$. (The
positive sign for monotone manifolds.) When $N=\infty$, we have
$I_{c_1}(A)=0$.  Furthermore, $\Lambda$ is the ring of Laurent series
$\F[q][[q^{-1}]]$ with $q=e^A$, where $\deg(q)=\pm 2N$ and
$I_\omega(q)=\lambda_0$. When $M$ is symplectically aspherical,
$\Gamma=0$ and $\Lambda\cong\F$.
\end{Example}

Concluding this discussion of Floer homology, two remarks are due.

\begin{Remark}
\label{rmk:floer-gb}
  The complex $\CF_*^{(a,\,b)}(H)$ can be thought of as the vector
  space formed by the formal sums \eqref{eq:floer-complex}, where, in
  addition to other requirements, $a<\CA(\bx)<b$. (Such a sum is
  necessarily finite if $a>-\infty$.) This point of view, taken in,
  e.g., \cite{Gi:coiso,GG:duke,Gu}, enables one to relax the
  compactness requirement on $M$ in the construction of the filtered
  Floer homology.  Namely, let $M$ be weakly monotone, rational, and
  geometrically bounded (see, e.g., \cite{AL,CGK}). Assume furthermore
  that $H$ is compactly supported and $(a,\,b)$ contains no point of
  the subgroup $\left<\omega,\pi_2(M)\right>$. (The latter condition
  forces $M$ to be rational and reduces to $0\not\in (a,\,b)$ when $M$
  is symplectically aspherical.)  Then the above construction of the
  Floer homology goes through word-for-word with the new definition of
  $\CF_*^{(a,\,b)}(H)$ once we notice that in \eqref{eq:floer-hom} it
  suffices to only require the capped periodic orbits of $\tH$ with
  action in $(a,\,b)$ to be non-degenerate and that this requirement
  holds for a generic perturbation $\tH$. This observation is used in
  the proof of Theorem \ref{thm:sdm} in Section \ref{sec:p-o-sdm}.
\end{Remark}

\begin{Remark}
  The condition that $M$ is rational in the definition of filtered
  Floer homology of general Hamiltonians is purely technical and can
  easily be eliminated although \eqref{eq:floer-hom} can no longer be
  used. The reason is that once the rationality requirement is
  dropped, the right hand side of \eqref{eq:floer-hom} depends in
  general on the choice of $\tH$ no matter how close the latter is to
  $H$. Hence, instead of using \eqref{eq:floer-hom}, one can define
  $\HF^{(a,\, b)}_*(H)$ as the limit of $\HF^{(a,\, b)}_*(\tH)$ over a
  certain class of perturbations $\tH$; cf.\ \cite{U1,U2}. For
  instance, we may require that $\tH\leq H$ and $a$ and $b$ are
  outside $\CS(\tH)$. Then the condition that $a$ and $b$ are not in
  $\CS(H)$ is irrelevant and $\HF^{(a,\, b)}_*(H)$ becomes very
  sensitive to variations of $a$ and $b$ and of $H$; cf.\ Section
  \ref{sec:hom}.  Throughout the paper, we mainly restrict our
  attention to the rational case, unless $H$ is assumed to be
  non-degenerate, for the rationality requirement enters the proofs in
  other, more essential, ways.
\end{Remark}

\subsubsection{Homotopy maps}
\label{sec:hom}

A homotopy of Hamiltonians on $M$ is a family of (periodic in time)
Hamiltonians $H^s$ smoothly parametrized by $s\in \R$ and a family of
almost complex structures $J^s$, compatible with $\omega$, such that
$H^s$ and $J^s$ are independent of $s$ for large positive and negative
values of $s$. Throughout this paper, we suppress the family $J^s$ in
the notation and think of a \emph{homotopy} of Hamiltonians on $M$ as
a family of (periodic in time) Hamiltonians $H^s$ smoothly
parametrized by $[0,\,1]$. (Such a family can be easily turned into
one smoothly parametrized by $\R$.)  It is well known that, when $H_0$
and $H_1$ are non-degenerate and certain (generic) regularity
requirements are met, a homotopy gives rise to a chain map of Floer
complexes
$$
\Psi\colon \CF_*(H^0)\to \CF_*(H^1)
$$
that induces an isomorphism on the level of total Floer homology; see,
e.g., \cite{MS} and references therein. The map $\Psi$ does not
preserve the action filtration. However, a standard estimate shows
that $\Psi$ increases the action by no more than
$$
E:=\int_{-\infty}^\infty\int_{S^1} \max_M \p_s H_t^s\,dt\,ds.
$$
As a consequence, for every $c\geq E$, the homotopy induces a map,
which we will also denote by $\Psi$ or $\Psi_{H^0,H^1}$, of the
filtered Floer homology, shifting the action filtration by $c$:
$$
\Psi_{H^0,H^1}\colon \HF_{*}^{(a,\,b)}(H^0)\to \HF_{*}^{(a+c,\,b+c)}(H^1).
$$
This map need not be an isomorphism. For instance, $\Psi=0$ if
$c>b-a$.

\begin{Example}
  Two particular cases of this construction are of interest for us.
  The first case is that of a monotone decreasing homotopy, i.e., a
  homotopy with $\p_s H^s_t\leq 0$. In this case, we can take $c=0$,
  and thus we obtain a map preserving the action filtration. The
  second case is that of a linear homotopy $H^s=sH^1+ (1-s) H^0$. Here
  we can take $c=E=\int \max (H^1-H^0)\, dt$. In what follows,
  whenever a homotopy from $H^0$ to $H^1\geq H^0$ is considered, this
  is always a monotone linear homotopy, unless specified otherwise.
\end{Example}

A (non-monotone) homotopy $H^s$ from $H^0$ to $H^1$ with $a$ and $b$
outside $\CS(H^s)$ for all $s$ gives rise to an isomorphism between
the groups $\HF^{(a,\,b)}_*(H^s)$, and hence, in particular,
\begin{equation}
\labell{eq:viterbo}
\HF^{(a,\,b)}_*(H^0)\cong \HF^{(a,\,b)}_*(H^1);
\end{equation}
see \cite{Vi} and also \cite{BPS,Gi:coiso}.  This isomorphism is
\emph{not} induced by the homotopy in the same sense as
$\Psi_{H^0,H^1}$, but is constructed by breaking the homotopy $H^s$
into a composition of nearly constant homotopies for which
\eqref{eq:viterbo} is essentially immediate.  The isomorphism
\eqref{eq:viterbo} is well-defined, i.e., it is completely determined
by the homotopy and independent of the choices made in its
construction. Furthermore, the isomorphism commutes with the maps from
the long exact sequence, provided that all three points $a<b<c$ are
outside $\CS(H^s)$ for all $s$. When $H^s$ is a decreasing homotopy,
the isomorphism \eqref{eq:viterbo} coincides with $\Psi_{H^0,H^1}$.

\begin{Example}
\labell{ex:isospec} 
A homotopy $H^s$ is said to be
\emph{isospectral} if $\CS(H^s)$ is independent of $s$. In this case,
the isomorphism \eqref{eq:viterbo} is defined for any $a<b$ outside
$\CS(H^s)$.

For instance, let $\eta_s^t$, where $t\in S^1$ and $s\in [0,\, 1]$, be
a family of loops of Hamiltonian diffeomorphisms based at $\id$, i.e.,
$\eta^0_s=\id$ for all $s$. In other words, $\eta_s^t$ is a based
homotopy from the loop $\eta_0^t$ to the loop $\eta_1^t$. Let $G^s_t$
be a family of one-periodic Hamiltonians generating these loops and
let $H$ be a fixed one-periodic Hamiltonian. Then $H^s:=G^s\# H$ is an
isospectral homotopy, provided that $G^s$ are suitably normalized.
(Namely, $\CA(G^s)=0$ for all $s$; see Section \ref{sec:loops}.)
\end{Example}

When $K\geq H^s$ for all $s$,
the isomorphism \eqref{eq:viterbo} intertwines monotone homotopy
homomorphisms from $K$ to $H^0$ and to $H^1$, i.e.,
the diagram
\begin{equation}
\label{eq:diag-iso}
\xymatrix{
{\HF^{(a,\,b)}_*(K)} \ar[d]_{\Psi_{K,H^0}}\ar[rd]^{\Psi_{K,H^1}} &\\
{\HF^{(a,\,b)}_*(H^0)} \ar[r]^{\cong} & {\HF^{(a,\,b)}_*(H^1)}
}
\end{equation}
is commutative. (See, e.g., \cite{Gi:conley} for more details.) The
assumption that $K\geq H^s$ for all $s$ is essential here: the maps
need not commute if we only require that $K\geq H^0$ and $K\geq
H^1$.

\subsubsection{Quantum homology}
The total Floer homology $\HF_*(H)$, equipped with the pair-of-pants
product, is an algebra over the Novikov ring $\Lambda$. This algebra
is isomorphic to the (small) \emph{quantum homology} $\HQ_*(M)$; see,
e.g., \cite{MS,PSS}. On the level of $\Lambda$-modules, we have
\begin{equation}
\label{eq:floer-qh}
\HQ_*(M)=H_*(M)\otimes_\F\Lambda
\end{equation}
with the tensor product grading. Thus, $\deg(x\otimes e^A)=
\deg(x)+I_{c_1}(A)$, where $x\in H_*(M)$ and $A\in\Gamma$.  The
isomorphism between $\HF_*(H)$ and $\HQ_*(M)$ is defined via the
PSS-homomorphism; see \cite{PSS} or \cite{MS}.  Alternatively, it can
be obtained from a homotopy of $H$ to an autonomous $C^2$-small
Hamiltonian (under slightly more restrictive conditions than weak
monotonicity, \cite{HS}) or with a somewhat different definition of
the total Floer homology (as the limit of $\HF^{(a,\,b)}_*(H)$ as
$a\to-\infty$ and $b\to \infty$, \cite{Ono:AC}).

The \emph{quantum product} $x*y$ of two elements $H_*(M)$ is defined as 
\begin{equation}
\label{eq:qp}
x*y=\sum_{A\in\Gamma} (x*y)_A \,e^{A},
\end{equation}
where the class $(x*y)_A\in H_*(M)$ is determined by the condition that
$$
(x*y)_A\circ z=\GW^M_{A,3}(x,y,z)
$$
for all $z\in H_*(M)$. Here $\circ$ stands for the intersection index
and $\GW^M_{A,3}$ is the corresponding Gromov--Witten invariant; see
\cite{MS}. Somewhat informally, the class $(x*y)_A$ can be described
as follows. Let $J$ be a generic almost complex structure compatible
with $\omega$. Fix generic cycles $X$ and $Y$ in $M$ representing the
classes $x$ and $y$, respectively.  For the sake of simplicity, we
assume that $X$ and $Y$ are embedded.  Let $\CM$ be (the
compactification of) the moduli space of $J$-holomorphic curves
$u\colon \CP^1\to M$ in the class $A$ such that $u(0)\in X$ and
$u(1)\in Y$. Then the homology class $(x*y)_A$ is represented by the
cycle (or rather a pseudo-cycle) $\ev_\infty\colon \CM\to M$ sending
$u$ to $u(\infty)$; see, e.g., \cite{MS} for more details.

Note that $(x*y)_0=x\cap y$, where $\cap$ stands for the intersection
product and $x$ and $y$ are ordinary homology classes. Furthermore,
$$
\deg (x*y)=\deg(x)+\deg(y)-2n
$$
and
\begin{equation}
\label{eq:qp-deg}
\deg (x*y)_A=\deg(x)+\deg(y)-2n-I_{c_1}(A).
\end{equation}
Also observe that $I_\omega(A)=-\omega(A)<0$ whenever $A\neq 0$ can be
represented by a holomorphic curve. Thus, in \eqref{eq:qp}, it
suffices to limit the summation to the negative cone $I_\omega(A)\leq
0$. In particular, in the setting of Example \ref{ex:Novikov}, we can
write
$$
x*y=x\cap y+\sum_{k>0} (x*y)_k \, q^{-k}.
$$
Here, $\deg(x*y)_k=\deg(x)+\deg(y)-2n \pm 2Nk$ when $N<\infty$, with
the positive or negative sign depending on whether $M$ is positive or
negative monotone.  This sum is finite. When $N=\infty$ (i.e.,
$c_1(M)\mid_{\pi_2(M)}=0$), we have $\deg(x*y)_k=\deg(x)+\deg(y)-2n$.

The product $*$ extends to a $\Lambda$-linear, associative,
graded-commutative product on $\HQ_*(M)$. The fundamental class $[M]$
is the unit in the algebra $\HQ_*(M)$. Thus, $qx=(q[M])*x$, where
$q\in\Lambda$ and $x\in H_*(M)$, and $\deg(qx)=\deg(q)+\deg(x)$. By
the very definition of $\HQ_*(M)$, the ordinary homology $H_*(M)$ is
canonically embedded in $\HQ_*(M)$. The group of symplectomorphisms
acts on the algebra $\HQ_*(M)$ via its action on $H_*(M)$ and,
clearly, symplectomorphisms isotopic to $\id$ (in this group) act
trivially.

\begin{Example} 
\label{ex:quantum=cup}
Assume that $N>2n$. Then, as immediately follows from
  \eqref{eq:qp-deg}, the quantum product coincides with the
  intersection product: $x*y=x\cap y$. There are numerous examples of
  closed, negative monotone manifolds with $N>2n$. (Among such
  manifolds is, for instance, the hypersurface $z_0^k+\ldots+z_n^k=0$
  in $\CP^n$ with $N=k-(n+1)>2n$.) However, to the best of the authors
  knowledge, no closed, monotone manifolds with $N>n+1$ are known. In
  a similar vein, as is easy to see, $x*y=x\cap y$ when $M$ is
  negative monotone and $N\geq n$; cf.\ \cite{LO}.
\end{Example}

\begin{Example} 
\label{ex:cpn}
Let $M=\CP^n$. Then $N=n+1$ and, in the notation of Example
\ref{ex:Novikov}, $\HQ_*(\CP^n)$ is the quotient of
$\F[u]\otimes\Lambda$, where $u$ is the generator of
$H_{2n-2}(\CP^n)$, by the ideal generated by the relation
$u^{n+1}=q^{-1}[M]$; see \cite{MS}.  Thus, $u^k=u\cap \ldots\cap u$
($k$ times) when $0\leq k\leq n$ and $[\pt]*u=q^{-1}[M]$. For further
examples of calculations of quantum homology and relevant references
we refer the reader to, e.g., \cite{MS}.
\end{Example}

Finally, let us extend the map $I_\omega$ from $\Gamma$ to $\HQ_*(M)$
and $\Lambda$ by setting
$$
I_\omega(x)=\max\{I_\omega(A)\mid x_A\neq 0\}
$$
for $x=\sum x_A e^A\in\HQ_*(M)$ and $I_\omega(0)=-\infty$.  The
extension to $\Lambda$ is defined by a similar formula or can be
obtained by restricting $I_\omega$ from $\HQ_*(M)$ to $\Lambda\cong
[M]\Lambda$. The map $I_\omega$ is a valuation:
$$
I_\omega(x+y)\leq \max\{ I_\omega(x), I_\omega(y)\}
$$
and $I_\omega(\alpha x)=I_\omega(x)$ if $\alpha\in\F$ is non-zero.

\begin{Remark}
  Note in conclusion that the definition of the Novikov ring $\Lambda$
  adapted in this paper is by no means standard in the context of
  quantum homology, although it is among the most natural choices as
  far as Floer homology is concerned.  Monograph \cite{MS} offers a
  detailed discussion of a variety of choices of the Novikov ring.
\end{Remark}

\subsection{Contractible loops of Hamiltonian diffeomorphisms}
\label{sec:loops}

Our goal in this section is to recall a few, mainly well-known, facts
about contractible loops of Hamiltonian diffeomorphisms and, more
specifically, about the action of such loops on the Floer complex of a
Hamiltonian.

In what follows, \emph{a loop $\eta^t$, $t\in S^1$, is always assumed
to be based at the identity and equipped with a (homotopy type of)
contraction $\eta_s$ of $\eta_1=\eta$ to $\eta_0\equiv\id$}, i.e.,
viewed as an element of the universal covering of the identity
component in the space of loops based at $\id$. Furthermore, when the
action of a Hamiltonian on closed curves is essential, \emph{a loop
$\eta$ will also be equipped with a loop of Hamiltonians $G_t^s$
generating $\eta^t_s$}.  Since $G_t^0$ generates the identity
Hamiltonian diffeomorphism, we may assume without loss of generality
that $G_t^0\equiv 0$.

\subsubsection{Action of loops on capped closed curves and on Floer homology}
\label{sec:loops1}
With the above conventions in mind, we observe that every orbit
$y=\eta^t(p)$ of a loop $\eta$ automatically comes with a capping
arising from the contraction of $\eta$ to $\id$. Trivializing $TM$
along such a capping, we have the Maslov index of the loop $\eta$
(i.e., the Maslov index of the loop of linear symplectic
transformations $d\eta^t\mid_{\eta^t(p)}\colon T_pM\to T_pM$)
well-defined.  This index is obviously equal to zero. Furthermore,
once the loop of Hamiltonians $G_t^s$ generating $\eta^t_s$ is fixed,
the action $\CA(G):=\CA_G(\by)$ of $G$ on the capped orbit $\by$ is
independent of the initial point $p$. (This action depends on the
choice of the Hamiltonians $G^s_t$. More specifically, $\CA(G)$ is
determined by the Hamiltonians $G_t=G^1_t$ and by $\eta$ regarded as
an element of the universal covering.)

Let $\bx$ be an arbitrary capped closed curve in $M$. We can view the
capping of $x$ as a map $[0,\,1]\times S^1\to M$ sending $[0,\,
1]\times\{0\}$ and $\{0\}\times S^1$ to $p=x(0)$ and equal to $x$ on
$\{1\}\times S^1$, i.e., a family of closed curves $x_s$, $s\in
[0,\,1]$, connecting $x_0\equiv p$ with $x_1=x$ and such that
$x_s(0)=p$ for all $s$.  Then $t\mapsto \eta^t(x(t)))$ is again a
closed curve with capping $(t,s)\mapsto \eta^t_s(x_s(t))$, which we
denote by $\Phi_\eta(\bx)$ or $\Phi_G(\bx)$. We say that the loop
$\eta$ sends the capped curve $\bx$ to $\Phi_\eta(\bx)$.

To apply this observation to Floer homology, consider a one-periodic
Hamiltonian $H_t$. The time-dependent flow $\eta^t\circ \varphi^t_H$
is generated by the Hamiltonian $G\# H$ and its time-one map coincides
with $\varphi_H$ in the universal covering of the group of Hamiltonian
diffeomorphisms. Hence, the map $\Phi_G$ gives rise to a one-to-one
correspondence between capped one-periodic orbits of $H$ and those of
$G\# H$. Moreover, this map induces an isomorphism of Floer
complexes. More precisely, let $J_t$ be a time-dependent almost
complex structure on $M$. Then $\Phi_G$ induces an isomorphism between
the Floer complex of $(H, J)$ and the Floer complex of $G\# H$ with
the almost complex structure $\tilde{J}_t:=d\eta^t \circ J_t \circ
(d\eta^t)^{-1}$.  This isomorphism preserves the grading (since the
Maslov index of $\eta$ is zero) and shifts the action filtration by
$\CA(G)$. Hence, in particular, we obtain an isomorphism
$$
\HF^{(a,\,b)}_*(H)\cong \HF^{(a+\CA(G),\,b+\CA(G))}_*(G\# H).
$$
On the level of total Floer homology, where the action filtration is
essentially ignored, this isomorphism coincides with the homotopy
isomorphism $\Psi_{H,G\# H}$.  (This is equivalent to the fact that
the Seidel representation is well-defined, i.e., trivial for
contractible loops; see, e.g., \cite{MS}.)

\subsubsection{Extension of loops}
\label{sec:loop2}
The next two geometrical results discussed in this section concern the
existence and extension of loops with specific properties and are used
in the proof of Theorem \ref{thm:sdm}. Here, we mainly follow
\cite{Gi:conley}, taking into account cappings of periodic orbits.

The first of these results asserts that every capped closed curve is
an orbit of a contractible loop of Hamiltonian diffeomorphisms. More
precisely, we have

\begin{Proposition}
\label{prop:loop1}
Let $\bx$ be a capped closed curve in $M$. Then there exists a
contractible loop of Hamiltonian diffeomorphisms $\eta$ fixing
$p=x(0)$ such that $\Phi_\eta(\bx)$ is a constant curve $p$ with
trivial capping.
\end{Proposition}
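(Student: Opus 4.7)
\textit{Proof plan.}  The plan is to first construct a Hamiltonian loop $\tpsi^t$ whose orbit at $p$ is the curve $x$, together with a contraction $\tpsi^t_s$ (from $\tpsi^t_0\equiv\id$ to $\tpsi^t_1=\tpsi^t$) whose orbit at $p$ sweeps out the entire capping, i.e., $\tpsi^t_s(p)=x_s(t)$. Once this is achieved, setting $\eta^t:=(\tpsi^t)^{-1}$ with contraction $\eta^t_s:=(\tpsi^t_s)^{-1}$ yields $\eta^t(x(t))=p$ for every $t$, and the induced capping of $\Phi_\eta(\bx)$ is
\[
(t,s)\longmapsto \eta^t_s(x_s(t)) \;=\; (\tpsi^t_s)^{-1}\bigl(\tpsi^t_s(p)\bigr) \;=\; p,
\]
the constant (hence trivial) capping at $p$. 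Thus $\Phi_\eta(\bx)$ is the constant capped curve at $p$, as desired.

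The family $\tpsi^t_s$ will be built in two stages. First, using the capping $u\colon [0,1]_s\times S^1_t\to M$ with $u(0,t)=u(s,0)=p$ and $u(1,t)=x(t)$, one constructs a smooth two-parameter family $\psi^t_s$ of Hamiltonian isotopies (Hamiltonian in $t$, depending smoothly on the parameter $s$) satisfying $\psi^0_s=\id$, $\psi^t_0\equiv\id$, and $\psi^t_s(p)=u(s,t)$. This is a standard extension argument: the vector field $\p_t u(s,t)$ along the point $u(s,t)$ is extended to a compactly supported Hamiltonian vector field $X^s_t$ on $M$ via a Darboux chart plus cutoff, smoothly in $(s,t)$, and $\psi^t_s$ is the $t$-flow of $X^s_t$ starting at $\id$.

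Second, one closes the endpoint isotopy $\psi^t_1$ into a loop by exploiting that $u(s,1)=u(s,0)=p$: the time-$1$ map $\psi^1_s$ fixes $p$ for every $s$, so the path $\sigma^s:=\psi^1_s$ takes values in the stabilizer of $p$ in $\Ham(M)$ and joins $\id$ (at $s=0$) to $\psi^1_1$ (at $s=1$). Then set
\[
\tpsi^t_s:=\psi^t_s\circ(\sigma^{st})^{-1},\qquad \tpsi^t:=\tpsi^t_1.
\]
Since $\sigma^{st}$ fixes $p$, the identity $\tpsi^t_s(p)=\psi^t_s(p)=x_s(t)$ still holds, while a direct check using $\sigma^0=\id$ and $\psi^0_s=\id$ shows that $\tpsi^t_s=\id$ whenever $s=0$ or $t\in\{0,1\}$. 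In particular $\tpsi^t$ is a Hamiltonian loop and $\tpsi^t_s$ contracts it to $\id$ through Hamiltonian diffeomorphisms, completing the construction.

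The only genuine technical point is the parameterized smooth extension of $\p_t u(s,t)$ to a family of Hamiltonian vector fields in the first step, especially across the degenerate stratum $\{s=0\}$ where the curves $x_s$ collapse to $p$. This is handled by standard Darboux-chart-plus-cutoff techniques with the parameter dependence built in, and presents no essential obstacle.
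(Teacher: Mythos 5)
Your proof is correct and lives in the same circle of ideas as the paper's: both build a contractible Hamiltonian loop whose orbit through $p$, together with its contraction, traces out the closed curve and its capping, and both rely on the standard extension of a vector field along a curve to a compactly supported time-dependent Hamiltonian vector field. The differences are technical but worth noting. The paper flows in the \emph{radial} direction $s$ of the capping: for each $t$ it chooses Hamiltonians $G^s_t$ with $s\mapsto x_s(t)$ as an integral curve and sets $\eta^t=\varphi^1_{G^\cdot_t}$. Because $x_s(0)\equiv p\equiv x_s(1)$, one may take $G^s_0\equiv 0\equiv G^s_1$, so the endpoints $\eta^0=\eta^1=\id$ close up automatically and the contraction $\eta^t_s=\varphi^s_{G^\cdot_t}$ comes for free. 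You instead flow in the \emph{angular} direction $t$, obtaining a two-parameter family $\psi^t_s$ that in general has $\psi^1_s\neq\id$; this forces the extra loop-closing step $\tpsi^t_s=\psi^t_s\circ(\sigma^{st})^{-1}$ using the path $\sigma^s=\psi^1_s$ in the stabilizer of $p$. Both routes need the standard flux-type fact that a smooth path in $\Ham(M)$ starting at $\id$ is a Hamiltonian isotopy, used implicitly. Finally, you are more careful about the direction than the paper's writeup: as stated, the paper's $\eta$ satisfies $\eta^t(p)=x(t)$, which gives $\Phi_\eta(\bar p)=\bx$ rather than $\Phi_\eta(\bx)=\bar p$; passing to the inverse loop, as you do explicitly with $\eta=(\tpsi)^{-1}$, is what matches the stated conclusion. (The clause ``$\eta$ fixing $p$'' in the proposition cannot literally hold for all $t$ unless $x$ is constant; it is a loose turn of phrase in the paper, and neither proof relies on it.)
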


\begin{proof}
As above, we can view the capping of $x$ as a smooth family of closed
curves $x_s\colon S^1\to M$, $s\in [0,1]$, connecting the constant
loop $x_0\equiv p$ to $x_1=x$. It is easy to show that there exists a
smooth family of Hamiltonians $G^s_t$ such that for every $t$, the
curve $s\mapsto x_s(t)$ is an integral curve of $G^s_t$ with respect
to $s$, i.e., $x_s=\varphi_G^s(p)$. Let $\eta^t=\varphi^1_G$ be the
time-one flow of this family, parametrized by $t\in S^1$. Then
$x=\eta^t(p)$. The family of Hamiltonians $G^s_t$ can be chosen so
that $G_{s,0}\equiv 0\equiv G_{s,1}$. Then $\eta^t$ is a loop of
Hamiltonian diffeomorphisms with $\eta^0=\id=\eta^1$. As readily
follows from the construction, the loop $\eta$ is contractible.
\end{proof}

The second result gives a necessary and sufficient condition for the
existence of an extension of a loop of local Hamiltonian
diffeomorphisms to a global loop.

Consider a loop $\eta^t$ of (the germs of) Hamiltonian diffeomorphisms
at $p\in M$ generated by $G$. In other words, the maps $\eta^t$ and
the Hamiltonian $G$ are defined on a small neighborhood of $p$ and
$\eta^t(p)=p$ for all $t\in S^1$. Then the action $\CA(G)$ and the
Maslov index $\mu(\eta)$ are introduced exactly as above with the
orbit $\bx$ taken sufficiently close to $p$. (In fact, we can set
$\bx\equiv p$ with trivial capping. Hence, $\CA(G)=\int_0^1 G_t(p)\,
dt$ and $\mu(\eta)$ is just the Maslov index of the loop $d\eta^t_p$
in $\Sp(T_pW)$.)  Note that in this case $\mu(\eta)$ need not be zero.

\begin{Proposition}
\labell{prop:loop2} Let $\eta^t$, $t\in S^1$, be a loop of germs of
Hamiltonian diffeomorphisms at $p\in M$. The following conditions are
equivalent:

\begin{itemize}

\item [(i)] the loop $\eta$ extends to a loop of global Hamiltonian
diffeomorphisms of $M$, contractible in the class of loops fixing $p$,

\item [(ii)] the loop $\eta$ is contractible in the group of germs of
Hamiltonian diffeomorphisms at $p$,

\item [(iii)] $\mu(\eta)=0$.

\end{itemize}
\end{Proposition}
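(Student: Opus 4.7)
I will establish the cyclic chain of implications (i) $\Rightarrow$ (ii) $\Rightarrow$ (iii) $\Rightarrow$ (i). The first two implications are essentially formal. For (i) $\Rightarrow$ (ii), simply restrict the global contracting homotopy of $\eta$ to a germ at $p$. For (ii) $\Rightarrow$ (iii), observe that the derivative at $p$ defines a continuous group homomorphism from the group of germs of Hamiltonian diffeomorphisms at $p$ fixing $p$ to $\Sp(T_pM)$, and the Maslov index is precisely the obstruction to null-homotopy in $\pi_1(\Sp(T_pM))\cong\Z$. A null-homotopy of $\eta$ in the germ group therefore descends to a null-homotopy of $d\eta^t_p$ in $\Sp(T_pM)$, forcing $\mu(\eta)=0$.

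The substantive implication is (iii) $\Rightarrow$ (i), and I would proceed in two steps. First, reduce to the case $d\eta^t_p \equiv \id$: since $\mu(\eta)=0$, the derivative loop $L^t:=d\eta^t_p$ is contractible in $\Sp(T_pM)$. Pick a time-dependent quadratic Hamiltonian generating $L^t$ on $T_pM$, pull it back to a Darboux chart around $p$, and smoothly cut it off to obtain a global loop $\lambda^t$ of Hamiltonian diffeomorphisms fixing $p$ with $d\lambda^t_p=L^t$; the cut-off is harmless because contractibility of $L^t$ lets us first replace it by an arbitrarily $C^0$-small representative of its homotopy class. After replacing $\eta^t$ by $(\lambda^t)^{-1}\eta^t$ we may assume $d\eta^t_p=\id$, and it suffices to produce a contractible global extension for this modified loop, which we then concatenate with the (globally defined, contractible) loop $\lambda^t$.

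Now assume $d\eta^t_p=\id$, so in Darboux coordinates centered at $p$ the generator $G_t$ of $\eta^t$ satisfies $G_t(x)=o(|x|^2)$ near $0$. Apply the Alexander-type rescaling
$$
\eta^t_s(x):=\frac{1}{s}\eta^t(sx)\ \text{for } s\in(0,1],\qquad \eta^t_0:=\id.
$$
Continuity at $s=0$ follows from the Taylor expansion $\eta^t(sx)=sx+O(s^2)$; a direct Jacobian computation shows each $\eta^t_s$ is symplectic; and for each $s$ the map $t\mapsto\eta^t_s$ is a loop fixing $p$. The generator $K^t_s(x):=G_t(sx)/s^2$ tends to $0$ as $s\to 0$ uniformly on compact sets, and the orbits of $\eta^t_s$ are confined to a neighborhood of $p$ that shrinks as $s\to 0$. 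To globalize, fix a Darboux ball $U$ around $p$ with a cut-off $\chi$ supported in $U$ and equal to $1$ on a smaller ball $V$; shrinking $U$ if necessary, the family $\eta^t_s$ lives inside $V$ for all $(t,s)$, so the global Hamiltonians $\chi K^t_s$ generate a global family $\tilde\eta^t_s$ of Hamiltonian diffeomorphisms that coincides with $\eta^t_s$ on $V$ and with $\id$ outside $U$.

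The main technical obstacle is ensuring that this cut-off family is genuinely a loop in $t$ globally for each $s$, i.e., that $\tilde\eta^1_s=\id$ on all of $M$ and not only near $p$. This is managed by exploiting the smallness of $K^t_s$ for small $s$ and the symplectic extension lemma: because the germ $\eta^t_s$ is $C^1$-close to the identity uniformly in $t$ (and arbitrarily so for small $s$), one may represent it by a symplectomorphism of $U$ that equals $\eta^t_s$ on $V$ and the identity near $\partial U$; extension by the identity outside $U$ then yields the required global loop, and the one-parameter family $s\mapsto\tilde\eta^t_s$ supplies the contraction.
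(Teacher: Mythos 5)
Your implications (i) $\Rightarrow$ (ii) and (ii) $\Rightarrow$ (iii) are fine and coincide with the paper's, which simply calls these implications obvious. The paper's route for the hard direction is (iii) $\Rightarrow$ (ii) $\Rightarrow$ (i): first a homotopy from the germ $\eta^t$ to its linearization $d\eta^t_p$ reduces contractibility of the germ loop to $\mu(\eta)=0$; then, to globalize, the germ-level contraction is chopped into $C^1$-small pieces, each of which is extended via a cut-off generating function (graphs of $d\tilde f_t$), and the pieces are composed inductively. You bypass (ii) and instead first peel off the linearization by a global loop $\lambda^t$ and then contract the remainder by an Alexander rescaling. This is a genuinely different decomposition, and it is worth comparing.

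Two points. First, the Alexander rescaling is unnecessary once you have arranged $d\eta^t_p\equiv I$: on a small enough ball $V$ the germ is then automatically $C^1$-close to $\id$ uniformly in $t$, so the generating-function extension you invoke at the end already applies directly to $\eta^t$; the contraction of the global extension is obtained by scaling the extended generating function $\tilde f_t\mapsto s\tilde f_t$, which fixes $p$ since $df_t(p)=0$. So steps 2 and 3 collapse to the paper's ``$C^1$-close'' sub-case.

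Second, and more seriously, the reduction to $d\eta^t_p\equiv I$ has a real gap. The time-one flow of a cut-off quadratic Hamiltonian $\chi Q_t$ is the identity only on the region where $\chi\equiv 1$; elsewhere it is a nontrivial compactly supported symplectomorphism, so $\lambda^t:=\varphi^t_{\chi Q}$ is not a loop. Your proposed fix --- ``replace $L^t$ by an arbitrarily $C^0$-small representative of its homotopy class'' --- does not resolve this: if you replace $L^t$ by a small loop $L'^t$, then $d\lambda^t_p=L'^t$ rather than $L^t$, and $(\lambda^t)^{-1}\eta^t$ no longer has identity linearization, which defeats the purpose of the reduction. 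To make this step work one must use the contraction $L^t_s$ of $L^t$ in a more structured way (for example, cut off the whole family $Q^s_t$, observe that $s\mapsto\lambda^1_s$ is a compactly supported Hamiltonian isotopy from $\id$, and compose with its reverse to close up the $t$-loop), or simply adopt the paper's inductive decomposition into $C^1$-small germ steps, which sidesteps the reduction entirely. As written, step 1 is the crux of the generality of (iii) $\Rightarrow$ (i) and it is not established.
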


\begin{proof}
The implications (i)$\Rightarrow$(ii)$\Rightarrow$(iii) are obvious.
To prove that (iii)$\Rightarrow$(ii), we identify a neighborhood of
$p$ in $M$ with a neighborhood of the origin in $\R^{2n}$. Then, as is
easy to see, the loop $\eta^t$ is homotopy equivalent to its
linearization $d\eta^t_p$, a loop of (germs of) linear maps.  By the
definition of the Maslov index, $d\eta^t$ is contractible in
$\Sp(\R^{2n})$ if and only if $\mu(\eta):=\mu(d\eta_p)=0$.

To complete the proof of the proposition, it remains to show that
(ii)$\Rightarrow$(i).  To this end, let us first analyze the case
where $\eta^t$ is $C^1$-close to the identity (and hence
contractible). Fixing a small neighborhood $U$ of $p$, we identify a
neighborhood of the diagonal in $U\times U$ with a neighborhood of the
zero section in $T^*U$. Then the graphs of $\eta^t$ in $U\times U$
turn into Lagrangian sections of $T^*U$.  These sections are the
graphs of exact forms $df_t$ on $U$, where all $f_t$ are $C^2$-small
and $f_0\equiv 0\equiv f_1$. Then we extend (the germs of) the
functions $f_t$ to $C^2$-small functions $\tf_t$ on $M$ such that
$\tf_0\equiv 0\equiv \tf_1$. The graphs of $d\tf_t$ in $T^*M$ form a
loop of exact Lagrangian submanifolds which are $C^1$-close to the
zero section. Thus, this loop can be viewed as a loop of Hamiltonian
diffeomorphisms of $M$. It is clear that the resulting loop is
contractible in the class of loops fixing $p$.

To deal with the general case, consider a contraction of $\eta$ to
$\id$, i.e., a family $\eta_s$, $s\in [0,\,1]$, of local loops with
$\eta_0\equiv \id$ and $\eta_1^t =\eta^t$.  Let
$0=s_0<s_1<\cdots<s_k=1$ be a partition of the interval $[0,\,1]$ such
that the loops $\eta_{s_i}$ and $\eta_{s_{i+1}}$ are $C^1$-close for
all $i=0,\ldots, k-1$. In particular, the loop $\eta_{s_1}$ is
$C^1$-close to $\eta_0=\id$, and thus extends to a contractible loop
$\teta_{s_1}$ on $M$. Arguing inductively, assume that a contractible
extension $\teta_{s_i}$ of $\eta_{s_i}$ has been constructed.
Consider the loop $\eta^t=\eta_{s_{i+1}}^t(\eta_{s_i}^t)^{-1}$ defined
near $p$. This loop is $C^1$-close to the identity, for
$\eta_{s_{i+1}}$ and $\eta_{s_i}$ are $C^1$-close. Hence, $\eta$
extends to a contractible loop $\tilde{\eta}$ on $M$. Then
$\teta_{s_{i+1}}^t:=\tilde{\eta}^t\teta^t_{s_i}$ is the required
extension of $\eta_{s_{i+1}}$, contractible in the class of loops
fixing $p$.
\end{proof}

\begin{Remark}
\label{rmk:loop-ext}

In the proof of Theorem \ref{thm:sdm}, we will also need the following
variant of Proposition \ref{prop:loop2}.  Assume that $\eta$ is the
germ of a loop near $p$ and the linearization of $\eta$ at $p$ is
equal to the identity, i.e., $d\eta^t_p=I$ for all $t$. (Hence,
$\mu(\eta)=0$.) Then $\eta$ extends to a loop $\teta$ of global
Hamiltonian diffeomorphisms of $M$ such that $\teta$ is contractible
in the class of loops fixing $p$ and having identity linearization at
$p$. This fact can be verified similarly to the proof of the
implication (ii)$\Rightarrow$(i).
\end{Remark}

\subsection{The mean index}
\labell{sec:mean-index}

Let $\bx$ be a capped one-periodic orbit of a Hamiltonian $H$ on $M$.
(It suffices to have $H$ defined only on a neighborhood of $x$.) The
mean index $\Delta_{H}(\bx)\in\R$ measures the sum of rotations of the
eigenvalues of $d(\varphi^t_H)_{x(t)}$ lying on the unit circle. Here
$d(\varphi^t_H)_{x(t)}$ is interpreted as a path in the group of
linear symplectomorphisms by using the trivialization of $TM$ along
$x$, associated with the capping.  (Similarly to our notation for the
action functional, we write $\Delta_{H^{(k)}}(\bx)$ when $\bx$ is
$k$-periodic.)  Referring the reader to \cite{SZ} for a precise
definition of $\Delta_{H}(\bx)$ and the proofs of its properties, we
just recall here the following facts that are used in this paper.

\begin{itemize}

\item[(MI1)] The iteration formula:
$\Delta_{H^{(k)}}(\bx^k)=k\Delta_H(\bx)$.

\item[(MI2)] Continuity: Let $\tH$ be a $C^2$-small perturbation of
$H$ and let $\by$ be a capped one-periodic orbit of $\tH$ close to
$\bx$. Then $|\Delta_H(\by)-\Delta_{\tH}(\bx)|$ is small.

\item[(MI3)] The mean index formula: Assume that $x$ is
non-degenerate. Then, as $k\to\infty$ through admissible iterations
(see Section \ref{sec:persistence}), $\MUCZ(H^{(k)},\bx^k)/k\to
\Delta_H(\bx)$.

\item[(MI4)] Relation to the Conley--Zehnder index: Let $\bx$ split
into non-degenerate orbits $\bx_1,\ldots,\bx_m$ under a $C^2$-small,
non-degenerate perturbation $\tH$ of $H$. Then
$|\MUCZ(\tH,\bx_i)-\Delta_H(\bx)|\leq n$ for all $i=1,\ldots,m$.
Moreover, these inequalities are strict when $x$ is weakly
non-degenerate; see \cite[p.\ 1357]{SZ}.  In particular, if $x$ is
non-degenerate, $|\MUCZ(H,\bx)-\Delta_H(\bx)|< n$.

\item[(MI5)] Additivity: Let $\bx_1$ and $\bx_2$ be one-periodic
orbits of Hamiltonians $H_1$ and $H_2$ on manifolds $M_1$ and,
respectively, $M_2$.  Then
$\Delta_{H_1+H_2}((\bx_1,\bx_2))=\Delta_{H_1}(\bx_1)
+\Delta_{H_2}(\bx_2)$, where $H_1+H_2$ is the naturally defined
Hamiltonian on $M_1\times M_2$.

\item[(MI6)] Action of global loops: Assume that $G$ generates a
contractible loop of Hamiltonian diffeomorphisms of $M$. Then
$\Delta_{G\# H}(\Phi_G(\bx))=\Delta_H(\bx)$.

\item[(MI7)] Action of local loops: Assume that $\bx$ is a constant
one-periodic orbit $x(t)\equiv p$ equipped with trivial capping and
that $G$ generates a loop of Hamiltonian diffeomorphisms fixing $p$
and defined on a neighborhood of $p$. Then $\Delta_{G\#
H}(\Phi_G(\bx))=\Delta_H(\bx)+2\mu$, where $\mu$ is the Maslov index
of the loop $d(\varphi^t_G)_p$.

\item[(MI8)] Index of strongly degenerate orbits: Assume that $x$ is
strongly degenerate. Then $\Delta_H(\bx)\in 2\Z$. Moreover, when
$\bx\equiv p$ is a constant orbit as in (MI7) and $H$ is defined on a
neighborhood of $p$ and generates a loop of Hamiltonian
diffeomorphisms, we have $\Delta_H(p)=2\mu$, where $\mu$ is the Maslov
index of the loop $\varphi^t_H$.

\end{itemize}

The \emph{mean index spectrum} $\CI(H)$ of $H$ is the set
$\{\Delta_H(\bx)\mid \bx\in\bPP(H)\}$. This set is closed, but, in
contrast with the action spectrum, need not have zero measure or even
be nowhere dense. The mean index spectrum modulo $2N$ is denoted by
$\CI_{2N}(H)$. Thus, $\CI_{2N}(H)=\{\Delta_H(x)\in S^1_{2N}\mid
x\in\PP(H)\}$.  Similarly to \eqref{eq:g-i1}, we have
\begin{equation}
\label{eq:g-i2}
\frac{\Delta_{H^{(k)}}(\bx)}{k}\equiv \frac{\Delta_{H^{(l)}}(\by)}{l}\mod \Q
\end{equation}
if the $k$-periodic orbit $\bx$ is geometrically identical to the
$l$-periodic orbit $\by$.

\subsection{Local Floer homology}
\labell{sec:LFH}

In this section, we briefly recall the definition and basic properties
of local Floer homology following mainly \cite{Gi:conley,GG:gap},
although this notion goes back to the original work of Floer (see,
e.g., \cite{F:witten,Fl}) and has been revisited a number of times
since then; see, e.g., \cite[Section 3.3.4]{Poz}.

Let $\bx$ be a capped isolated one-periodic orbit of a Hamiltonian
$H\colon S^1\times M\to \R$. Pick a sufficiently small tubular
neighborhood $U$ of $\gamma$ and consider a non-degenerate $C^2$-small
perturbation $\tH$ of $H$ supported in $U$.  Every (anti-gradient)
Floer trajectory $u$ connecting two one-periodic orbits of $\tH$ lying
in $U$ is also contained in $U$, provided that $\|\tH-H\|_{C^2}$ and
$\supp(\tH-H)$ are small enough.  Thus, by the compactness and gluing
theorems, every broken anti-gradient trajectory connecting two such
orbits also lies entirely in $U$. Similarly to the definition of local
Morse homology, the vector space (over $\F$) generated by one-periodic
orbits of $\tH$ in $U$ is a complex with (Floer) differential defined
in the standard way. The continuation argument (see, e.g.,
\cite{MS,SZ}) shows that the homology of this complex is independent
of the choice of $\tH$ and of the almost complex structure. We refer
to the resulting homology group $\HF_*(H,\bx)$ as the \emph{local
Floer homology} of $H$ at $\bx$.

\begin{Example}
Assume that $\bx$ is non-degenerate and $\MUCZ(\bx)=k$.  Then
$\HF_l(H,\bx)=\F$ when $l=k$ and $\HF_l(H,\bx)=0$ otherwise.
\end{Example}

In the rest of this section, we list the basic properties of local
Floer homology that are essential for what follows.

\begin{enumerate}
\item[(LF1)] Let $H^s$, $s\in [0,\, 1]$, be a family of Hamiltonians
such that $x$ is a \emph{uniformly isolated} one-periodic orbit for
all $H^s$, i.e., $x$ is the only periodic orbit of $H_s$, for all $s$,
in some open set independent of $s$. Then $\HF_*(H^s,\bx)$ is constant
throughout the family: $\HF_*(H^0,\bx)=\HF_*(H^1,\bx)$.
\end{enumerate}

Local Floer homology spaces are building blocks for filtered Floer
homology. Namely, essentially by definition, we have the following:

\begin{enumerate}
\item[(LF2)] Assume that all one-periodic orbits $x$ of $H$ are
isolated and $\HF_k(H,\bx)=0$ for some $k$ and all $\bx$. Then
$\HF_k(H)=0$. Moreover, let $M$ be rational and closed and let $c\in
\R$ be such that all capped one-periodic orbits $\bx_i$ of $H$ with
action $c$ are isolated. (As a consequence, there are only finitely
many orbits with action close to $c$.) Then, if $\eps>0$ is small
enough,
$$
\HF_*^{(c-\eps,\,c+\eps)}(H)=\bigoplus_i \HF_*(H,\bx_i).
$$

\end{enumerate}

Just as ordinary Floer homology, the local Floer homology is
completely determined by the time-one map generated by $H$ viewed as
an element of the universal covering of the group of Hamiltonian
diffeomorphisms:

\begin{enumerate}
\item[(LF3)] Let $\varphi^t_G$ be a contractible loop of Hamiltonian
diffeomorphisms of $M$. Then
$$
\HF_*(G\# H,\Phi_G(\bx)) = \HF_*(H,\bx)
$$
for every isolated one-periodic orbit $x$ of $H$.
\end{enumerate}

Furthermore, the K\"unneth formula holds for local Floer homology:

\begin{enumerate}
\item[(LF4)] Let $\bx_1$ and $\bx_2$ be capped one-periodic orbits of
Hamiltonians $H_1$ and $H_2$ on, respectively, symplectic manifolds
$M_1$ and $M_2$. Then $\HF_*\big(H_1+H_2, (\bx_1,\bx_2)\big)
=\HF_*(H_1,\bx_1)\otimes \HF_*(H_2,\bx_2)$, where $H_1+H_2$ is the
naturally defined Hamiltonian on $M_1\times M_2$.
\end{enumerate}

By definition, the \emph{support} of $\HF_*(H,\bx)$ is the collection
of integers $k$ such that $\HF_k(H,\bx)\neq 0$. Clearly, the group
$\HF_*(H,\bx)$ is finitely generated and hence supported in a finite
range of degrees. The next observation, providing more precise
information on the support of $\HF_*(H,\bx)$, is an immediate
consequence of (MI4).

\begin{enumerate}
\item[(LF5)] The group $\HF_*(H,\bx)$ is supported in the range
$[\Delta_H(\bx),\, \Delta_H(\gamma)+2n]$. Moreover, when $x$
is weakly non-degenerate, the support is contained in the open
interval $(\Delta_H(\bx),\, \Delta_H(\bx)+2n)$.
\end{enumerate}

As is easy to see from the definition of local Floer homology, $H$
need not be a function on the entire manifold $M$ -- it is sufficient
to consider Hamiltonians defined only on a neighborhood of $x$. For
the sake of simplicity, we focus on the particular case where
$x(t)\equiv p$ is a constant orbit equipped with trivial
capping. Thus, $dH_t(p)=0$ for all $t\in S^1$. Then (LH1), (LH4) and
(LF5) still hold, and (LF3) takes the following form:

\begin{enumerate}
\item[(LF6)] Let $\varphi^t_G$ be a loop of Hamiltonian
diffeomorphisms defined on a neighborhood of $p$ and fixing $p$. Then
$$
\HF_*(G\# H,p) = \HF_{*+2\mu}(H,p),
$$
where $\mu$ is the Maslov index of the loop
$t\mapsto d(\varphi^t_G)_p\in \Sp(T_pM)$.
\end{enumerate}

\section{The Conley conjecture type results}
\label{sec:conley-pfs}

Our goal in this section is to prove three Conley conjecture type
results -- Theorems \ref{thm:conley}, \ref{thm:N/n}, and
\ref{thm:sdm-conley} -- and to illustrate our point that the
arguments, in fact, concern the action or index spectra of the
Hamiltonian in question. The proofs rely on Theorem \ref{thm:sdm},
which is established in Section \ref{sec:p-o-sdm}.

\subsection{The normalized action and mean index spectra}
\label{sec:norm-spectr}
The most direct way to obtain information about the number of periodic
orbits via the action or mean index spectrum $\CS_{\lambda_0}(H)$ and
$\CI_{2N}(H)$, discussed in Sections \ref{sec:capped-per-orb} and
\ref{sec:mean-index}, is by simply observing that the number of (not
necessarily simple) geometrically distinct $k$-periodic orbits of $H$
is bounded from below by $\big|\CS_{\lambda_0}\big(H^{(k)}\big)\big|$
and $\big|\CI_{2N}\big(H^{(k)}\big)\big|$. (Here we allow the
cardinality $|\cdot|$ to assume infinite value, but we do not
distinguish between different kinds of infinity.) Clearly, $H$ has
infinitely many geometrically distinct periodic orbits whenever
\begin{equation}
\label{eq:eq-spec}
\limsup \big|\CS_{\lambda_0}\big(H^{(k)}\big)\big|=\infty
\text{ or }
\limsup \big|\CI_{2N}\big(H^{(k)}\big)\big|=\infty
\text{ as } k\to\infty.
\end{equation}

The spectra of the iterations $H^{(l)}$ and $H^{(kl)}$ are related via
natural maps
$$
\CS_{\lambda_0}\big(H^{(l)}\big)\to k\CS_{\lambda_0}\big(H^{(l)}\big)
\subset \CS_{\lambda_0}\big(H^{(kl)}\big).
$$
When $\big|\PP^\infty(H)\big|<\infty$, the inclusions
$k\CS_{\lambda_0}\big(H^{(l)}\big) \subset
\CS_{\lambda_0}\big(H^{(kl)}\big)$ ``stabilize'', i.e.,
$\CS_{\lambda_0}\big(H^{(kd)}\big)= k\CS_{\lambda_0}\big(H^{(d)}\big)$ 
for all $k$,
where $d$ is the least common multiple of simple periods of $H$.
Thus, we obtain the following criterion: \emph{ Assume that $H$ has
  finitely many periodic orbits. Then, for every $k_0>0$ there exists
  $l>0$, divisible by $k_0$, such that
  $k\CS_{\lambda_0}\big(H^{(l)}\big) =
  \CS_{\lambda_0}\big(H^{(kl)}\big)$ for every $k$.}  Clearly, the
same considerations apply when the action spectrum is replaced by the
mean index spectrum $\CI_{2N}\big(H^{(k)}\big)$.

Alternatively, to account for the role of recapping and iterations, it
is sometimes convenient to interchange taking the $\limsup$ and
cardinality in \eqref{eq:eq-spec} and put together the spectra of all
iterations $H^{(k)}$ as follows.

Let $a\in (0,\infty]$. Consider the set of infinite sequences
$a_l,\,a_{2l}=2a_l,\, a_{3l}=3a_l,\ldots$ in $S^1_a=\R/a\Z$, for all
integers $l>0$. Let us call two sequences $\{a_{il}\}$ and
$\{a'_{il'}\}$ equivalent if they coincide on their common domain of
indices or, equivalently, $a_d=a'_d$, where $d$ is the least common
multiple of $l$ and $l'$. The set of such sequences, up to this
equivalence relation, is in one-to-one correspondence with $\R/a\Q$
via the map sending $\{a_{il}\}$ to $a_{il}/(il)$. Here, when
$a=\infty$, the quotient $\R/a\Q$ is taken to be $\R$.

Let now $H$ be a one-periodic Hamiltonian on a rational manifold
$M$. Then every capped $k$-periodic orbit $\bx$ of $H$ gives rise to a
sequence $a_{ik}=\CA_{H^{(ik)}}(\bx^i)$.  The resulting element of
$\R/\lambda_0\Q$, which we will call the normalized action of $H$ on
$\bx$, is independent of capping and, by \eqref{eq:g-i1}, two
geometrically identical periodic orbits give rise to the same
element. The set $\CS_{\lambda_0}^{\infty}(H)\subset \R/\lambda_0\Q$
of all normalized actions of $H$ will be called the \emph{normalized
action spectrum} of $H$. The \emph{normalized mean index spectrum} of
$H$, denoted by $\CI^{\infty}(H)\subset \R/2N\Q=\R/\Q$, is defined in
a similar fashion. Finally note that the assumption that $H$ is
one-periodic is not essential and the definitions extend to
$k$-periodic Hamiltonians in an obvious way.  It is clear from
\eqref{eq:g-i1} and \eqref{eq:g-i2} that
$$
\big|\PP^\infty(H)\big|\geq
\limsup \big|\CS_{\lambda_0}\big(H^{(k)}\big)\big|\geq 
\big|\CS_{\lambda_0}^{\infty}(H)\big|
$$
and 
$$
\big|\PP^\infty(H)\big|\geq
\limsup \big|\CI_{2N}\big(H^{(k)}\big)\big|\geq
\big|\CI^{\infty}(H)\big|.
$$
Although these lower bounds are admittedly indiscriminating tools for
keeping track of periodic orbits, they do naturally arise in the
proofs of Theorems \ref{thm:N/n} and \ref{thm:sdm-conley}. Note also
the inclusions
$\CS_{\lambda_0}^{\infty}\big(H^{(k)}\big)\subset\CS_{\lambda_0}^{\infty}(H)$
and $\CI^{\infty}\big(H^{(k)}\big)\subset\CI^{\infty}(H)$ which hold
for any integer $k>0$.

\subsection{Proof of Theorem \ref{thm:sdm-conley}}

The first assertion (i) is a consequence of the following result

\begin{Proposition}
\label{prop:sdm-conley2}
Let $M$ and $H$ be as in Theorem \ref{thm:sdm-conley}(i) and all
periodic orbits of $H$ are isolated.  Then for every $l>0$ divisible
by $k_0$ there exists $k>0$ such that
$k\CS_{\lambda_0}\big(H^{(l)}\big) \neq
\CS_{\lambda_0}\big(H^{(kl)}\big)$, i.e., the action spectrum of
$H^{(k)}$ never stabilizes.
\end{Proposition}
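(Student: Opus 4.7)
The plan is to argue by contradiction. Fix $l>0$ divisible by $k_0$ and write $l=ak_0$. Suppose, contrary to the claim, that $k\CS_{\lambda_0}(H^{(l)})=\CS_{\lambda_0}(H^{(kl)})$ for every integer $k\ge 1$. Let $\bx$ denote the SDM of $H^{(k_0)}$ supplied by the hypothesis, and set $c:=\CA_{H^{(k_0)}}(\bx)$. A first observation is that the isolation hypothesis on periodic orbits of $H$, combined with closedness of $\Fix(\varphi_H^{m})$ in the compact manifold $M$, forces this fixed-point set to be finite for every $m$; in particular $\CS_{\lambda_0}(H^{(l)})$ is a finite subset of $S^1_{\lambda_0}$, which we list as $\{\sigma_1,\ldots,\sigma_r\}$.

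Apply Theorem~\ref{thm:sdm} with $H^{(k_0)}$ playing the role of $H$: for every sufficiently small $\eps>0$ there is $m_\eps$ such that, for every $m>m_\eps$, the group $\HF_{2n+1}^{(mc+\delta_m,\,mc+\eps)}(H^{(mk_0)})$ is nonzero for some $\delta_m\in(0,\eps)$, yielding a capped periodic orbit $\by_m$ of period $mk_0$ with $\CA(\by_m)\in(mc+\delta_m,\,mc+\eps)$. Specializing $m=ak$ for integer $k>m_\eps/a$ produces an orbit $\by_{ak}$ of period $kl$ whose action satisfies
\begin{equation*}
\CA_{H^{(kl)}}(\by_{ak})=akc+s_k,\qquad s_k\in(\delta_{ak},\eps).
\end{equation*}
The stabilization assumption then forces $\CA_{H^{(kl)}}(\by_{ak})\equiv k\sigma_{j(k)}\pmod{\lambda_0}$ for some $j(k)\in\{1,\ldots,r\}$; setting $t_j:=(ac-\sigma_j)\bmod\lambda_0\in[0,\lambda_0)$, this reads
\begin{equation*}
kt_{j(k)}\pmod{\lambda_0}\in(\lambda_0-\eps,\,\lambda_0-\delta_{ak})\subset(\lambda_0-\eps,\lambda_0)
\end{equation*}
for every $k>m_\eps/a$.

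The contradiction now comes from analyzing the sequences $(kt_j\bmod\lambda_0)_{k\ge 1}$ one $j$ at a time. If $t_j=0$, the sequence is identically zero and never meets the target interval; if $t_j=(p/q)\lambda_0$ is a nonzero rational multiple of $\lambda_0$ in lowest terms, the sequence cycles through the $q$ values $\{i\lambda_0/q:0\le i\le q-1\}$, and for $\eps\le\lambda_0/q$ none of these values lies in $(\lambda_0-\eps,\lambda_0)$; if $t_j/\lambda_0$ is irrational, Weyl's equidistribution theorem gives the natural density of admissible $k$ to be exactly $\eps/\lambda_0$. Choose $\eps>0$ strictly smaller than $\lambda_0/q_j$ for every rational nonzero $t_j$ and also strictly smaller than $\lambda_0/(r+1)$. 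Then only the irrational $t_j$ can contribute, and by the union bound the density of $k$ for which some $j$ works is at most $r\eps/\lambda_0<1$, so its complement is infinite. This contradicts the requirement that every $k>m_\eps/a$ admit a witness $j(k)$.

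The substantive input is Theorem~\ref{thm:sdm}, which supplies the Floer class in a tight action window around $mc$; its proof---reducing via a direct-sum decomposition of filtered Floer homology to the symplectically aspherical case treated in \cite{GG:duke,GG:gap}---is the main work, deferred to Section~\ref{sec:p-o-sdm}. Once Theorem~\ref{thm:sdm} is granted, the proposition reduces to combining finiteness of $\CS_{\lambda_0}(H^{(l)})$ (from the isolation hypothesis and compactness of $M$) with the elementary equidistribution estimate above, so the only genuinely delicate point in the remainder is ensuring that $\eps$ is chosen simultaneously to kill the rational $t_j$'s exactly and to keep the total irrational density below one.
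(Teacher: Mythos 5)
Your proposal is correct and follows essentially the same route as the paper's own proof: both argue by contradiction using Theorem~\ref{thm:sdm} to produce, for all large iterations, a periodic orbit whose action lies in a shrinking window just beyond the (rescaled) SDM action, then observe that under the stabilization assumption this forces one of finitely many rotation sequences $k t_j \bmod \lambda_0$ to land in a fixed short arc for \emph{every} large $k$, and finally derive a contradiction by separating rational and irrational $t_j$'s and applying equidistribution. The only differences are cosmetic: you keep the SDM action $c$ explicit rather than normalizing it to zero, apply Theorem~\ref{thm:sdm} to $H^{(k_0)}$ and specialize to multiples of $a=l/k_0$ instead of passing to $F=H^{(l)}$ directly, and handle each rational $t_j$ individually rather than via a common denominator as the paper does.
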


\begin{proof}[Proof of Proposition \ref{prop:sdm-conley2}]
  Let us focus on the case where $\lambda_0<\infty$. (The case of
  $\lambda_0=\infty$, i.e., $\omega\mid_{\pi_2(M)}=0$ is treated in a
  similar way. Moreover, then $\limsup
  \big|\CS\big(H^{(k)}\big)\big|=\infty$.)  Furthermore, let us
  normalize $H$ so that the action on the symplectically
  non-degenerate maximum is equal to zero.

  Arguing by contradiction, assume that there exists $l>0$, divisible
  by $k_0$, such that $k\CS_{\lambda_0}\big(H^{(l)}\big) =
  \CS_{\lambda_0}\big(H^{(kl)}\big)$ for every $k$.  The Hamiltonian
  $F=H^{(l)}$ still has a symplectically degenerate maximum (with
  action value equal to zero), for an iteration of a symplectically
  non-degenerate maximum is again a symplectically non-degenerate
  maximum; see \cite{GG:gap}.  Thus, Theorem \ref{thm:sdm} applies to
  $F$. Finally, for the sake of simplicity, let us assume that $F$ is
  one-periodic, for the period can always be altered via a
  reparametrization.

  The action spectrum $\CS_{\lambda_0}(F)$ is finite since the
  periodic orbits of $H$, and hence of $F$, are isolated. Let
$$
\CS_{\lambda_0}(F)=\big\{a_1,\ldots,a_r\big\}\cup\lambda_0\big\{p_1/q_1,\ldots,p_s/q_s\big\},
$$
where the first $r$ points $a_i$ are irrational modulo $\lambda_0$,
the last $s$ points $p_j/q_j$ are rational and we require $p_j$ and
$q_j$ to be mutually prime. Let $d$ be the least common multiple of
$q_1,\ldots, q_s$.  Then, due to our assumptions,
$$
\CS_{\lambda_0}\big(F^{(k)}\big)=k\CS_{\lambda_0}(F)
\subset
\big\{ka_1,\ldots,ka_r\big\}\cup\frac{\lambda_0}{d}\Z
$$ 
modulo $\lambda_0$. By Theorem \ref{thm:sdm}, for every sufficiently
small $\eps>0$, there exists $k_\eps>0$ such that for every
$k>k_\eps$, at least one of the points from
$\CS_{\lambda_0}\big(F^{(k)}\big)$ is in the arc $(0,\,\eps)\subset
S^1$. When $\eps<1/d$, this is one of the points $ka_i$.  Denote by
$p_i(m,\eps)$ the probability that the point $ka_i$ with $k_\eps<k\leq
m$ is in the arc $(0,\,\eps)$. In other words, $p_i(m,\eps)$ is the
number of integers $k\in (k_\eps,\,m]$, divided by $m-k_\eps$, such
that $ka_i\in (0,\,\eps)$.  Since the rotation of $S^1$ in $a_i$ is
ergodic, $p_i(m,\eps)\to \eps$ as $m\to\infty$. Summing up over all
$i=1,\ldots,r$, we conclude that the number of integers $k\in
(k_\eps,\,m]$ with $ka_i\in (0,\,\eps)$ for at least one
$i=1,\ldots,r$ does not exceed $(m-k_\eps)r\eps+o(m-k_\eps)$.
Therefore, when $\eps<1/r$ and $m$ is large enough, there exists an
integer $k\in (k_\eps,\,m]$ such that $ka_i\not\in (0,\,\eps)$ for all
$i$. This contradicts Theorem~\ref{thm:sdm}.
\end{proof}

\begin{Remark} We conjecture that 
$\limsup \big|\CS_{\lambda_0}\big(H^{(k)}\big)\big|=\infty$ as
$k\to\infty$, under the hypotheses of Proposition
\ref{prop:sdm-conley2}. It is easy to see that this is true when
$\lambda_0=\infty$ and the argument above falls just short of showing
that this is true in general.
\end{Remark}

Returning to the proof of Theorem \ref{thm:sdm-conley}, let us first
assume that $c_1(M)\mid_{\pi_2(M)}=0$. In this case, the mean index of
a periodic orbit is independent of capping and we suppress the capping
in the notation. By Theorem \ref{thm:sdm} and (LF2), for every
sufficiently large $k$, the Hamiltonian $H$ has a $k$-periodic orbit
$y$ with $\Delta_{H^{(k)}}(y)\in [1,\, 2n+1]$. When $k$ is prime, $y$
is either simple or the $k$-th iteration of a one-periodic orbit $z$
of $H$. The latter is impossible when $k$ is large, since for every
one-periodic orbit $z$ either $\Delta_{H^{(k)}}(z^k)\equiv 0$ or
$|\Delta_{H^{(k)}}(z^k)|\to \infty$ as $k\to\infty$, and there are
only finitely many one-periodic orbits. (Observe also that here, in
contrast with the general case, the assumption that $M$ is rational is
used only to ensure that $M$ satisfies the requirements of Theorem
\ref{thm:sdm}, where it plays a purely technical role.)

Finally, the remaining case where $\omega\mid_{\pi_2(M)}=0$ (i.e.,
$\lambda_0=\infty$) is treated in a similar fashion, but with action
used in place of the mean index. This is a standard argument; see,
e.g., \cite{Gi:conley,Hi}.

\subsection{Proof of Theorem \ref{thm:conley}} Here, as 
in \cite{Gi:conley,GG:gap,Hi}, we consider two cases: the case where
$H$ has a symplectically non-degenerate maximum and the case where all
one-periodic orbits of $H$ have non-zero mean index. The assertion in
the former case follows from Theorem \ref{thm:sdm-conley}. In the
latter case, since the mean index of an orbit is independent of
capping, the support of local Floer homology of any periodic orbit
shifts away from the interval $[0,\,2n]$ as the order of iteration
grows. Then the theorem follows again by the standard argument; see
\cite{Gi:conley,Hi,SZ}.

\subsection{Proof of Theorem \ref{thm:N/n}} The theorem is an immediate 
consequence of the following result:

\begin{Proposition}
\label{prop:N/n2}
Let $M$ be a closed, rational, weakly monotone manifold with $2N> 3n$
and let $H$ be a Hamiltonian on $M$ with finitely many geometrically
distinct periodic orbits. Then $\big|\CI^{\infty}(H)\big|\geq \lceil
N/n \rceil$.
\end{Proposition}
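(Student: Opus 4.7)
The plan is to combine Theorem \ref{thm:sdm-conley}, property (LF5), and a Weyl-equidistribution argument along a suitable arithmetic progression. If $N=\infty$ then the hypothesis that $H$ has finitely many periodic orbits is already in conflict with Theorem \ref{thm:conley}, so assume $N<\infty$. Let $x_1,\ldots,x_r$ be the geometrically distinct periodic orbits with simple periods $p_i$, fix cappings $\bx_i$, and set $\delta_i=\Delta_{H^{(p_i)}}(\bx_i)/p_i$ and $L=\text{lcm}(p_1,\ldots,p_r)$. Applying Theorem \ref{thm:sdm-conley}(i) to both $H$ and $-H$ (an SDM of $-H$ at $\bx$ corresponds to $\Delta_H(\bx)=0$ together with $\HF_0(H,\bx)\ne 0$), no iteration of $H$ admits a capped orbit with $\Delta=0$ and either $\HF_{2n}\ne 0$ or $\HF_0\ne 0$.

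Next, fix $k\in L\Z_{>0}$. Since $\HF_*(H^{(k)})\cong\HQ_*(M)$ and $[M]\in\HQ_{2n}(M)$ is non-zero, property (LF2) provides a capped $k$-periodic orbit $\by_k$ with $\HF_{2n}(H^{(k)},\by_k)\ne 0$. By (LF5) and the SDM exclusion, $\Delta_{H^{(k)}}(\by_k)\in(0,2n]$. Writing $\by_k=\bx_{i(k)}^{k/p_{i(k)}}\# A_k$ with $A_k\in\Gamma$, and noting $I_{c_1}(A_k)\in 2N\Z$, we obtain
\[
k\,\delta_{i(k)}\bmod 2N\in(0,\,2n]\subset\R/2N\Z
\]
for every large $k\in L\Z$. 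The crucial step is to collapse this constraint into one indexed by $\Q$-classes: write $\delta_i=\alpha_{c(i)}+r_i$ with $c(i)\in\{1,\ldots,s\}$, $s=|\CI^\infty(H)|$, fixed representatives $\alpha_c$ (taking $\alpha_c=0$ for the rational class if present), and $r_i\in\Q$. Choose $q\in\Z_{>0}$ with $qr_i\in 2N\Z$ for all $i$; then for $k\in qL\Z$ one has $k\delta_i\equiv k\alpha_{c(i)}\pmod{2N}$, so the constraint becomes: for every large $k\in qL\Z$, some class $c$ satisfies $k\alpha_c\bmod 2N\in(0,2n]$.

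The rational class fails this always (since $0\notin(0,2n]$), while for each irrational class $(k\alpha_c)_{k\in qL\Z}$ is Weyl-equidistributed in $\R/2N\Z$, giving density $2n/(2N)=n/N$. A union bound over the $s''$ irrational classes then yields $1\le s''\cdot n/N$, so $s''\ge \lceil N/n\rceil$ and $|\CI^\infty(H)|\ge s''\ge\lceil N/n\rceil$. I expect the main obstacle to be precisely this collapsing reduction: without restricting to the progression $qL\Z$, orbits in a common $\Q$-class typically cover different values of $k$, and a naive density bound only controls the number of geometric orbits $r$ (which suffices for Theorem \ref{thm:N/n} but not for Proposition \ref{prop:N/n2}); the synchronization of the rational shifts on $qL\Z$ is what caps each class's contribution at $n/N$ regardless of its multiplicity, upgrading the bound to one on the number of classes $s=|\CI^\infty(H)|$.
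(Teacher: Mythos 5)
Your proof is correct and follows essentially the same route as the paper: both force, for each large iteration, a capped orbit with $\HF_{2n}\neq 0$, translate this via (LF5) and the symplectically-degenerate-maximum exclusion (Theorem \ref{thm:sdm-conley}) into membership of $k\delta_i$ in a length-$2n$ arc of $S^1_{2N}$, and conclude by Weyl equidistribution and a density/union bound. Your device of restricting $k$ to the arithmetic progression $qL\Z$ to annihilate the rational parts of the $\delta_i$ is an equivalent reformulation of the paper's step of passing at the outset to a fixed iteration $F=H^{(k_0)}$ whose mean index spectrum is ``rationalized,'' so the two arguments differ only in bookkeeping.
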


\begin{proof}[Proof of Proposition \ref{prop:N/n2}] 
  Since $H$ has finitely many periodic orbits, the Hamiltonian
  $F=H^{(k_0)}$, for a suitably chosen iteration $k_0>0$, has the
  following properties:
\begin{itemize}
\item $\CI_{2N}\big(F^{(k)}\big)\subset k\CI_{2N}(F)$ for all $k>0$;

\item $\CI_{2N}(F)\cap (\Q/\Z)$ contains at most one point $\{0\}$,
i.e., whenever the mean index of a periodic orbit of $F$ is rational,
this index is integer and divisible by $2N$;

\item $(\CI_{2N}(F)-\CI_{2N}(F))\cap (\Q/\Z)$ contains at most one
point $\{0\}$, i.e., whenever the difference of the mean indices of
two periodic orbits of $F$ is rational, this difference is an integer
divisible by $2N$.
\end{itemize}
It suffices to show that $\big|\CI^{\infty}(F)\big|\geq \lceil N/n
\rceil$.  For the sake of simplicity, let us assume again that $F$ is
one-periodic (rather than $k_0$-periodic) -- this can always be
achieved via a change of time.

We break the collection of capped one-periodic orbits of $F$ into two
groups: those with integer mean indices (divisible by $2N$) and the
remaining capped orbits -- those with irrational mean indices. These
groups do not change when $F$ is replaced by any of its iterations
$F^{(k)}$ due to the first condition in the description of $F$.  By
(LF2), for every $k$, there exists a $k$-periodic orbit $\bx$ such
that $\HF_{2n}(F^{(k)},\bx)\neq 0$. We claim that $\bx$ is necessarily
in the second group, i.e., $\Delta_{F^{(k)}}(\bx)\not\in
2N\Z$. Indeed, otherwise, we would have $\Delta_{F^{(k)}}(\bx)=2Nl$
for some $l\in \Z$. By (LF5), the local Floer homology of $\bx$ is
supported in the interval $[2Nl,\, 2Nl+2n]$. When $l\neq 0$, this
interval does not contain $2n$, and hence $\HF_{2n}(F^{(k)},\bx)=
0$. If $l=0$, the orbit $\bx$ is a symplectically degenerate maximum
of $F^{(k)}= H^{(kk_0)}$, and thus $H$ has infinitely many
geometrically distinct periodic orbits by Theorem
\ref{thm:sdm-conley}(i), which contradicts our basic assumption.
  
  Let $\CS_{2N}(F)\ssminus \{0\}=\{\Delta_1,\ldots,\Delta_r\}$.  By
  our choice of $k_0$ (the last two conditions),
  $\big|\CI^{\infty}(F)\big|\geq r$. Since $\Delta_i$ is irrational,
  the point $k\Delta_i$ enters the arc $[n,\,3n]\subset
  S^1_{2N}=\R/2N\Z$ of length $2n$ with probability $n/N$. To be more
  precise, for every $k>0$ the number of $j$ such that $1\leq j\leq k$
  and $j\Delta_i\in [n,\,3n]$ is equal to $kn/N$, up to a constant
  independent of $k$. As is observed above, for every $k>0$, at least
  one of the points $k\Delta_1,\ldots,k\Delta_r$ is within the arc
  $[n,\,3n]$. Hence, $r\geq \lceil N/n \rceil$.
\end{proof}

\begin{Remark}
\label{rmk:resonances}
A minor modification of this argument shows that whenever $H$ has
finitely many periodic orbits and $N\geq n+1$, the mean indices of the
periodic orbits of $H$ must satisfy a resonance relation modulo
$2N$. The circle of questions concerning these resonance relations is
explored in more detail in \cite{GK}.
\end{Remark}

\section{The bounded gap theorem} 
\label{sec:gap-pfs}

In this section, we establish the bounded gap theorem (Theorem
\ref{thm:gap1}), following closely the line of reasoning from
\cite{Gi:conley,GG:gap} where a similar result is proved for
symplectically aspherical manifolds. The proof of Theorem
\ref{thm:gap1} relies on Theorem \ref{thm:sdm}, established in Section
\ref{sec:p-o-sdm}, and on two properties of Floer homology: the
stability of filtered Floer homology and the persistence of local
Floer homology under iterations, which we discuss first.

\subsection{Stability of Floer homology}
\labell{sec:stab} 
The stability of filtered Floer homology asserts that the part of
filtered Floer homology that is stable under perturbations of the
action interval is also stable with respect to variations of the
Hamiltonians; cf.\ \cite{BC,Ch,GG:gap}.

To be more precise, consider Hamiltonians $K$ and $F$ on a closed
weakly monotone symplectic manifold $M$.  Set
$$
E^+=\int_0^1 \max_M F_t\,dt\quad\text{and}\quad
E^-=-\int_0^1 \min_M F_t\,dt
$$
so that $\|F\|=E^++E^-$ is the Hofer energy of $F$. Furthermore, let
$$
E^+_{0}=\max\{E^+,0\}\quad\text{and}\quad
E^-_{0}=\max\{E^-,0\}\quad\text{and}\quad
E_0(F)=E_{0}^+ +E_{0}^-.
$$
Note that $E^\pm_0=E^\pm$ and $E_0=\| F\|$ when $F$ is normalized 
or, more generally, $\min_M F_t\leq 0\leq \max_M F_t$.

Let $a<b$ be such that the end points of the intervals $(a,\,b)$ and
$(a+E_0,\,b+E_0)$ are outside $\CS(K)$ while the end points of
$(a+E_0^+\,,b+E_0^+)$ are outside $\CS(K\#F)$. These requirements are
met by almost all $a$ and $b$.

Furthermore, consider the following commutative diagram:
\begin{displaymath}
\xymatrix
{
\HF^{(a,\,b)}_*(K) \ar[r]^-\Psi \ar[dr]_{\kappa}
& \HF^{(a+E^+_0,\,b+E^+_0)}_*(K\# F) \ar[d] \\
& \HF^{(a+E_0,\,b+E_0)}_*(K)
}
\end{displaymath}
where $\kappa$ is the natural ``quotient-inclusion'' map, the
homomorphism $\Psi$ is induced by the linear homotopy from $K$ to $K\#
F$, and the vertical arrow is induced by the linear homotopy from $K\#
F$ to $K$. We obviously have

\begin{Lemma}[Stability of filtered Floer homology, \cite{GG:gap}]
\labell{lemma:stab}
Assume that $\kappa(x)\neq 0$, where
$x\in \HF^{(a,\,b)}_*(K)$. Then $\Psi(x)\neq 0$. In particular,
$\HF^{(a+E_0^+,\,b+E_0^+)}_*(K\# F)\neq 0$, whenever $\kappa\neq 0$.
\end{Lemma}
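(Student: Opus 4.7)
The proof plan is essentially a diagram chase: the conclusion is a formal consequence of the asserted commutativity together with the observation that $0\mapsto 0$ under any homomorphism. If $\Psi(x)=0$ in $\HF^{(a+E^+_0,\,b+E^+_0)}_*(K\#F)$, then the vertical arrow sends $\Psi(x)=0$ to $0$ in $\HF^{(a+E_0,\,b+E_0)}_*(K)$; by commutativity this coincides with $\kappa(x)$, contradicting $\kappa(x)\neq 0$. Thus $\Psi(x)\neq 0$, as claimed. The ``in particular'' clause is immediate: if $\kappa$ is nonzero as a homomorphism, pick $x$ with $\kappa(x)\neq 0$; then $\Psi(x)\neq 0$ witnesses nontriviality of $\HF^{(a+E^+_0,\,b+E^+_0)}_*(K\#F)$.

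All genuine content thus lies in the commutativity of the triangle. My plan for this is to invoke the energy estimates from Section~\ref{sec:hom}: the linear homotopy from $K$ to $K\#F$ has energy $\int\max_M(K\#F-K)_t\,dt=\int\max_M F_t\,dt=E^+$, hence induces $\Psi$ with filtration shift $E^+_0=\max(E^+,0)$; the reverse linear homotopy from $K\#F$ to $K$ has energy $E^-$, giving an additional shift of $E^-_0$, for a total of $E_0$, matching the codomain of $\kappa$. The concatenation of the two linear homotopies is a null-homotopic loop in the space of Hamiltonians based at $K$, so the induced map on total Floer homology is the identity; at the filtered level, translated by the action shift $E_0$, this identity becomes precisely the quotient-inclusion map $\kappa$.

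The main potential obstacle -- implicit in the authors' word ``obviously'' -- is verifying this last identification at the chain level, i.e., that the composition is chain-homotopic, not merely homologous, to the natural inclusion-projection defining $\kappa$. This is standard and follows by a homotopy-of-homotopies argument interpolating between the concatenated homotopy and the constant homotopy at $K$, combined with the invariance of filtered Floer homology under such families (cf.\ the isospectral homotopies of Example~\ref{ex:isospec} and the commuting diagram \eqref{eq:diag-iso}). Modulo this routine bookkeeping, the lemma is immediate from the diagram chase above.
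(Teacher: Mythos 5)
Your argument is correct and matches the paper's intended reasoning: the lemma is a formal diagram chase from the commutativity of the triangle, which the paper asserts as standard (``We obviously have''), and your sketch of the commutativity via the linear homotopies and a filtration-respecting homotopy-of-homotopies is the right justification. One minor imprecision: any homotopy from $K$ to itself induces the identity on the \emph{total} Floer homology irrespective of null-homotopy; the null-homotopy together with the energy bound is what is needed for the chain homotopy to respect the action filtration, which you correctly identify as the substantive point.
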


\begin{Remark}
  It is worth mentioning that the assertion of Lemma \ref{lemma:stab}
  is meaningful only when the length of the interval $(a,\, b)$
  exceeds the size of the perturbation: if the intervals $(a,\,b)$ and
  $(a+E_0,\,b+E_0)$ do not overlap, the quotient-inclusion map
  $\kappa$ is necessarily zero.
\end{Remark}

\subsection{Persistence of local Floer homology}
\labell{sec:persistence}

The second ingredient needed in the proof of Theorem \ref{thm:gap1} is
the persistence of local Floer homology under iterations.

Let $M$ be an arbitrary symplectic manifold and let $x$ be an isolated
one-periodic orbit of a Hamiltonian $H\colon S^1\times M\to \R$.
Recall that a positive integer $k$ is an \emph{admissible iteration}
of $\varphi=\varphi_H$ (with respect to $x$) if $\lambda^k\neq 1$ for
all eigenvalues $\lambda\neq 1$ of $d\varphi_p\colon T_pM\to T_pM$,
where $p=x(0)$. In other words, $k$ is admissible if and only if
$d\varphi_p^k$ and $d\varphi_p$ have the same generalized eigenvectors
with eigenvalue one. For instance, when no eigenvalue $\lambda\neq 1$
is a root of unity (e.g., all eigenvalues are equal to 1), all $k> 0$
are admissible. It is easy to see that admissible iterations form a
quasi-arithmetic sequence.

\begin{Theorem}[Persistence of local Floer homology, \cite{GG:gap}]
\labell{thm:persist-lf}

Assume that $x$ is an isolated one-periodic orbit of $H$.  Then the
$k$-periodic orbit $x^k$ is also isolated, whenever $k$ is admissible,
and the local Floer homology groups of $H$ and $H^{(k)}$ coincide up
to a shift of degree:
\begin{equation}
\labell{eq:case2}
\HF_*\big(H^{(k)} ,\bx^k\big)=\HF_{*+s_k}(H,\bx)
\quad\text{for some $s_k$.}
\end{equation}
Furthermore, $\lim_{k\to \infty}s_k/k=\Delta_H(\bx)$, provided that
$\HF_*(H,\bx)\neq 0$ and hence the shifts $s_k$ are uniquely
determined by \eqref{eq:case2}.  Moreover, when $\Delta_H(\bx)=0$ and
$\HF_{2n}(H,\bx)\neq 0$, the orbit $x$ is strongly degenerate, all
$s_k$ are zero, and $\bx$ is a symplectically degenerate maximum.

\end{Theorem}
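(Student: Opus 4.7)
The plan is to reduce the theorem to the case of a constant orbit with a model Hamiltonian, then split the computation via a Künneth factorization. First, by Proposition \ref{prop:loop1} there is a contractible loop generated by $G$ with $\Phi_G(\bx) \equiv p$ carrying trivial capping. Combining (LF3) with its iterated version (the loop $G^{\# k}$ is still contractible and sends $\bx^k$ to $p$), we get
$$
\HF_*(H, \bx) \cong \HF_*(G \# H, p) \quad \text{and} \quad \HF_*(H^{(k)}, \bx^k) \cong \HF_*((G \# H)^{(k)}, p),
$$
so I may replace $H$ by $G \# H$ and assume $\bx \equiv p$ is a constant orbit with trivial capping. Next, decompose $T_p M = V_1 \oplus V_2$ symplectically, with $V_1$ the generalized $1$-eigenspace of $A := d(\varphi_H)_p$ and $V_2$ its symplectic complement; admissibility of $k$ means exactly that $V_1$ is also the generalized $1$-eigenspace of $A^k$, so the decomposition is stable under iteration. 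Using the flexibility of local loops at $p$ (Remark \ref{rmk:loop-ext}) together with (LF6), I would arrange the linearization to be block-diagonal with $V_1$-block equal to $I$ at time $1$ and $V_2$-block having no $1$-eigenvalue for every admissible period, at the cost of an explicit even shift in the grading that can be absorbed into the final $s_k$.

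The heart of the argument is a model computation. Introduce $F = Q + f$ on Darboux coordinates $V_2 \times V_1$ near $p$, where $Q$ is the quadratic form on $V_2$ generating the prescribed linear flow (slightly deformed to be strictly non-degenerate for both period $1$ and period $k$) and $f$ is a $C^2$-small autonomous Morse perturbation on $V_1$ with isolated critical point at $0$. A homotopy from $H$ to $F$ through Hamiltonians for which $p$ stays uniformly isolated simultaneously as a one-periodic and as a $k$-periodic orbit, combined with (LF1), identifies $\HF_*(H, p) \cong \HF_*(F, 0)$ and likewise for the $k$-th iteration. The Künneth formula (LF4) then gives
$$
\HF_*(F^{(k)}, 0) \cong \HF_*(Q^{(k)}, 0) \otimes \HF_*(f^{(k)}, 0).
$$
The first factor is $\F$ concentrated in degree $\MUCZ(Q^{(k)}) + \frac{1}{2} \dim V_2$ by non-degeneracy, while the second factor is independent of $k$: a standard $C^2$-small reduction identifies $\HF_*(f^{(k)}, 0)$ with the local Morse homology of $f$ at $0$, which is iteration-invariant. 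Comparing $k = 1$ with general admissible $k$ yields \eqref{eq:case2} with $s_k = \MUCZ(Q^{(k)}) - \MUCZ(Q)$.

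The asymptotic $s_k/k \to \Delta_H(\bx)$ then follows from (MI3) applied to $Q$ together with the additivity (MI5), once one observes that $f$ and the normalized $V_1$-linearization contribute zero to the mean index by construction. For the final assertion, if $\Delta_H(\bx) = 0$ and $\HF_{2n}(H, \bx) \neq 0$, then (LF5) places the support in $[0, 2n]$ and the presence of a non-zero class in top degree forces the $V_2$-factor to sit in degree $0$; since $\Delta(Q, 0) = 0$ by the normalization but $Q$ is non-degenerate, this can only happen when $V_2 = \{0\}$, so $x$ is strongly degenerate, all $s_k$ vanish, and $\HF_{2n}(H^{(k)}, \bx^k) \cong \HF_{2n}(H, \bx) \neq 0$ for every admissible $k$, making $\bx$ a symplectically degenerate maximum by definition. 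The main technical obstacle lies in controlling the homotopy from $H$ to the model $F$: one must verify that $p$ remains uniformly isolated along the homotopy for both periods simultaneously, and this is precisely where admissibility is essential, preventing bifurcation of new $k$-periodic orbits off $p$ at intermediate stages.
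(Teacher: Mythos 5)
Your high-level strategy is the right one and broadly tracks the argument in \cite{GG:gap}, which the paper itself only references: reduce to a constant orbit with trivial capping via Proposition \ref{prop:loop1} and (LF3), split $T_pM$ into the generalized $1$-eigenspace $V_1$ of $d\varphi_H$ at $p$ and a symplectic complement $V_2$, and isolate the non-degenerate ($V_2$) factor with a K\"unneth decomposition so that the degenerate ($V_1$) factor is iteration-invariant. Your treatment of the index bookkeeping ($s_k$ in terms of $\MUCZ(Q^{(k)})$, the limit via (MI3), and the support argument for the ``Moreover'' clause via (LF5) and (MI4)) is essentially sound.

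However, there is a genuine gap at the step you yourself flag as ``the main technical obstacle,'' and it is not a detail that admissibility dispatches for free --- it is, in fact, where the whole weight of the theorem lies. You need a homotopy from $H$ to a \emph{split} Hamiltonian $F = Q + f$ (with $Q$ quadratic on $V_2$ and $f$ a $C^2$-small Hamiltonian on $V_1$) along which $p$ remains uniformly isolated simultaneously for periods $1$ and $k$. This amounts to a symplectic Gromoll--Meyer type splitting for the germ of $\varphi_H$ at $p$, together with a persistence-of-isolation argument along the interpolation. Neither is automatic: the return map does not literally factor as a product over $V_1\times V_2$, and an isolation-preserving deformation to such a product requires a real construction (implicit-function-theorem splitting in the $V_2$-directions, control of the residual $V_1$-dynamics, and verification that no nearby periodic points appear at intermediate stages). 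You state that admissibility ``prevents bifurcation off $p$ at intermediate stages,'' but that is a mischaracterization of what admissibility gives: it ensures $d\varphi^k_p$ has the same generalized $1$-eigenspace as $d\varphi_p$, so that the $V_1\oplus V_2$ decomposition is stable under taking the $k$th iterate. It says nothing directly about fixed-point bifurcations along a homotopy of Hamiltonians, which is a separate analytic point. In particular, even the preliminary claim that $x^k$ is isolated for admissible $k$ requires an argument you have not given. Without the splitting lemma, the K\"unneth step and everything downstream are conditional, so the proof as written is incomplete.

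Two smaller points. First, the loop manipulation that block-diagonalizes the $V_1$-linearization to the identity changes the grading by twice a Maslov index (see (LF6) and (MI7)); you assert this can be ``absorbed into $s_k$'' but you need to check the accounting is consistent for all $k$ simultaneously, since a local loop is applied once, not per iteration. Second, in the final step, once you conclude $V_2=\{0\}$ you should also observe that the uniqueness of $s_k$ follows from $\HF_*(H,\bx)\neq 0$, and that $s_k=0$ is forced because the $Q$-factor is trivial; as written the jump to ``all $s_k$ vanish'' is correct in substance but is stated before the necessary justification.
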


This theorem is proved in \cite{GG:gap} under the additional
assumption that $M$ is symplectically aspherical. However, as was
already pointed out in that paper, the argument is essentially local
and goes through for a general symplectic manifold. Namely, by the
results of Section \ref{sec:loops}, the statement reduces to the case
where $\bx$ is a constant orbit $p$ with trivial capping. The rest of
the proof concerns only the behavior of $H$ on a neighborhood of $p$;
see \cite{GG:gap}.

\subsection{Proof of Theorem \ref{thm:gap1}}

Similarly to its predecessors from \cite{Gi:conley,GG:gap}, the
argument is based on the analysis of two cases.  Namely, since
$\HF_{2n}(H)\neq 0$, there exists a capped one-periodic orbit $\bx$ of
$H$ with $\HF_{2n}(H,\bx)\neq 0$. Thus, $\Delta_H(\bx)\geq 0$. The
first, ``degenerate'', case is where $\Delta_H(\bx)=0$, i.e., $\bx$ is
a symplectically degenerate maximum, while in the second,
``non-degenerate'', case $\Delta_H(\bx)>0$.  (Since, in general, $\bx$
is not unique, the two cases are not mutually exclusive for a given
Hamiltonian $H$.)

\subsubsection{The ``degenerate'' case: $\HF_{2n}(H,\bx)\neq 0$ and 
$\Delta_H(\bx)=0$} 
This is the case where $\bx$ is a symplectically degenerate maximum of
$H$. By Theorem \ref{thm:sdm}, for every sufficiently small $\eps>0$
there exists an integer $k_\eps>0$ such that for all $k>k_\eps$ we
have
$$
\HF_{2n+1}^{(kc,\,kc+\eps)} \big(H^{(k)}\big)\neq 0,
$$
where $c=A_H(\bx)$. Hence, $H$ has a $k$-periodic orbit $\bz_k$ with
$$
0<\A_{H^{(k)}}(\bz_k)-\A_{H^{(k)}}(\bx^k)<\eps
$$
and
$$
1\leq \Delta_{H^{(k)}}(\bz_k)-\Delta_{H^{(k)}}(\bx^k)\leq 2n+1.
$$
Note that once $\eps$ is smaller than the rationality constant $\lambda_0$ of
$M$, the orbit $\bz_k$ cannot be a recapping of $\bx^k$.
It remains to set $\nu_i$=$k_\eps+i$ and $\by_i=\bz_{k_\eps+i}$.

\subsubsection{The ``non-degenerate'' case: $\HF_{2n}(H,\bx)\neq 0$ and
  $\Delta_H(x)>0$}
In this case, $x$ is weakly non-degenerate; see \cite{GG:gap}. Furthermore,
it is easy to see that there exists an infinite, quasi-arithmetic
sequence $l_1<l_2<\ldots$ of admissible iterations such that
$$
l_i+1<l_{i+1}
$$
and the local Floer homology jumps from $l_i$ to $l_{i+1}$, i.e.,
\begin{equation}
\labell{eq:l}
0\neq \HF_*\big(H^{(l_i)},\bx^{l_i}\big)\neq \HF_*\big(H^{(l_i+1)},\bx^{l_i+1}\big).
\end{equation}

Note that here the group $\HF_*\big(H^{(l_i)},\bx^{l_i}\big)$ is
non-trivial by Theorem \ref{thm:persist-lf} and, by (LF5), supported
in the interval $(l_i\Delta_H(\bx),\, l_i\Delta_H(\bx)+2n)$, while the
group $\HF_*\big(H^{(l_i+1)},x^{l_i+1}\big)$ may possibly be trivial.

Let us normalize $H$ by requiring it to have zero mean. Then $E_0=\|
H\|$ as has been pointed out above. Set $c=\CA_H(\bx)$. Thus,
$\CA_{H^{(k)}}(\bx^k)=kc$ for all $k$.  Recall also that, by the
hypotheses of the theorem, we have
\begin{itemize}
\item[(i)] $E_0=\|H\|<\lambda_0$, where $\lambda_0>0$ is the
rationality constant of $M$, or
\item[(ii)] $N\geq 2n$, where $N$ is the minimal Chern number of $M$.
\end{itemize}

Fix $\alpha>E_0>0$ and
$\beta>0$. When condition (i) is satisfied, we also require that
\begin{equation}
\labell{eq:alpha+beta}
\max\{\alpha,\beta+E_0\}<\lambda_0.
\end{equation}

The $i$th entry in the sequence $\nu_i$ is equal either to $l_i$ or to
$l_i+1$.  Assume that $\nu_1,\ldots, \nu_{i-1}$ and the periodic
orbits $\by_1,\ldots,\by_{i-1}$ have been chosen. Our goal is then to
find a $\nu_i$-periodic orbit $\by=\by_i$ with either $\nu_i=l_i$ or
$\nu_i=l_{i}+1$ satisfying the requirements of Theorem
\ref{thm:gap1}. (The first orbit $y_1$ and the period $\nu_1$ are
chosen in a similar fashion.) Note that, since $x$ is one-periodic,
$\by_i$ and $\bx^{\nu_i}$ are geometrically distinct if and only if
$y_i\neq x^{\nu_i}$, i.e., $\by_i$ is not a recapping of
$\bx^{\nu_i}$.

Set $l=l_i$ to simplify the notation, and $a=lc-\alpha$ and
$b=lc+\beta$.  Fix $m$ such that $\HF_m\big(H^{(l)},\bx^{l}\big)\neq
0$, but $\HF_m\big(H^{(l+1)},\bx^{l+1}\big)= 0$.  Under the above
assumptions, $\by_i$ and $\nu_i$ are chosen differently in each of the
following three cases.

\emph{Case 1: $\HF_m^{(a,\,b)}\big(H^{(l)}\big)=0$.} It is easy to see
that in this case $H$ has an $l$-periodic orbit $\by$, killing the
contribution of $\HF_m\big(H^{(l)},\bx^{l}\big)$ to
$\HF_m^{(a,\,b)}\big(H^{(l)}\big)$, such that
$|\Delta_{H^{(l)}}(\by)-m|\leq n+1$ and the action
$\CA_{H^{(l)}}(\by)\neq kc$ is in the interval $(a,\,b)$.  It is clear
that the action and mean index gaps for $\bx^{l}$ and $\by$ are
bounded from above by $\max\{\alpha,\beta\}$ and, respectively, $2n+1$
and the action gap is strictly positive. Hence, in particular,
$\by\neq \bx^k$.

If condition (i) is satisfied, $\by$ is not a non-trivial recapping of
$\bx^l$ due to \eqref{eq:alpha+beta}. Under condition (ii), the same
is true since the mean index gap does not exceed $2n+1$ which is
smaller than $2N$. (Here it would be sufficient to assume $N>n$ in
(ii).) Thus $y\neq x^l$. We set $\nu_i=l$ and $\by_i=\by$.

\emph{Case 2: $\HF_m^{(a+E^+_0,\,b+E^+_0)}\big(H^{(l+1)}\big)\neq 0$.}
In this case, there exists an $(l+1)$-periodic orbit $\by$ with action
in the interval $(a+E^+_0,\,b+E^+_0)$ and
$\HF_m\big(H^{(l+1)},\by\big)\neq 0$.  We set $\nu_i=l+1$ and
$\by_i=\by$. To verify the requirements of the theorem, we first note
that
\begin{equation}
\labell{eq:action-bound}
|\CA_{H^{(l+1)}}(\bx^{l+1})-\CA_{H^{(l+1)}}(\by)|
\leq \max \{\alpha-E^+_0,\beta+E^+_0\}.
\end{equation}
Furthermore, since $|\Delta_{H^{(l+1)}}(\by)-m|\leq n$ and
$|\Delta_{H^{(l)}}(\bx^l)-m|\leq n$, we have
\begin{equation}
\labell{eq:index-bound}
|\Delta_{H^{(l+1)}}(\bx^{l+1})-\Delta_{H^{(l+1)}}(\by)| \leq
\Delta_H(\bx)+2n< 4n.
\end{equation}
The last inequality in \eqref{eq:index-bound} follows from the fact
that $\HF_{2n}(H,\bx)\neq 0$, and hence, by (LF5), $\Delta_H(x)< 2n$
since $x$ is weakly non-degenerate. The inequalities
\eqref{eq:action-bound} and \eqref{eq:index-bound} give upper bounds
on the action and mean index gaps.

To show that $y\neq x^{l+1}$, i.e., $\by$ is not a recapping of
$\bx^{l+1}$, we argue as follows. First note that $\by\neq\bx^{l+1}$,
since $\HF_m\big(H^{(l+1)},\bx^{l+1}\big)= 0$ while
$\HF_m\big(H^{(l+1)},\by\big)\neq 0$.  Thus, it remains to prove that
$\by$ cannot be a non-trivial recapping of $\bx^{l+1}$. When condition
(i) is satisfied this follows immediately from \eqref{eq:alpha+beta}
and \eqref{eq:action-bound}. If condition (ii) holds, this follows
from \eqref{eq:index-bound}.

\emph{Case 3: $\HF_m^{(a,\,b)}\big(H^{(l)}\big)\neq 0$, but
$\HF_m^{(a+E,\,b+E)}\big(H^{(l+1)}\big)= 0$.} First note that
throughout the entire argument we might have assumed that $H_0\equiv 0
\equiv H_1$, and thus $H^{\# k}$ is also one-periodic for all $k$. Now
we can identify one-periodic orbits of $H^{\# k}$ with $k$-periodic
orbits of $H$ and the Floer homology of $H^{\# k}$ and of $H^{(k)}$.
Using stability of filtered Floer homology as in Section
\ref{sec:stab} with $K=H^{\# l}$ and $F=H$, we see that the
quotient--inclusion map
$$
\kappa\colon \HF_m^{(a,\,b)}\big(H^{\# l}\big)\to
\HF_m^{(a+E,\,b+E)}\big(H^{\# l}\big)
$$
is necessarily zero, for $\HF_m^{(a+E^+_0,\,b+E^+_0)}\big(H^{\#
(l+1)}\big)= 0$. Since $\HF_m^{(a,\,b)}\big(H^{\# l}\big)\neq 0$, we
infer by a simple exact sequence argument that
$$
\HF_m^{(a,\,a+E)}\big(H^{(l)}\big)=\HF_m^{(a,\,a+E)}\big(H^{\# l}\big)\neq 0
$$
or/and 
$$
\HF_{m+1}^{(b,\,b+E)}\big(H^{(l)}\big)=\HF_{m+1}^{(b,\,b+E)}\big(H^{\# l}\big)\neq 0.
$$
In the former case, there exists an $l$-periodic orbit $\by$ with
action in the range $(a,\,a+E)$ and $|m-\Delta_{H^{(l)}}(\by)|\leq
n$. In the latter case, there exists an $l$-periodic orbit $\by$ with
action in the range $(b,\,b+E)$ and $|m+1-\Delta_{H^{(l)}}(\by)|\leq
n$.  Then
\begin{equation}
\labell{eq:lu-action}
0<\min \{\alpha-E,\beta\}< |\CA_{H^{(l)}}(\by)-\CA_{H^{(l)}}(\bx^l)|
\leq \max \{\alpha,\beta+E\}
\end{equation}
and
$$
|\Delta_{H^{(l)}}(\bx^{l})-\Delta_{H^{(l)}}(\by)|\leq 2n+1.
$$

Since the action gap is positive, $\by\neq \bx^l$. Furthermore, when
(i) holds, $\by$ is not a non-trivial recapping $\bx^l$ by
\eqref{eq:alpha+beta} and \eqref{eq:lu-action}. If condition (ii) is
satisfied, $\by$ is not a non-trivial recapping $\bx^l$ since the mean
index gap does not exceed $2n+1<2N$.  (Here, as in Case 1, it would be
sufficient to assume that $N>n$.) Thus $y\neq x^l$. We set $\nu_i=l$
and $\by_i=\by$.

This completes the proof of Theorem \ref{thm:gap1}.

\begin{Remark}
  It is easy to infer from the proof of the theorem that the upper
  bound on the action gap is in fact $E_0=\| H\|$ when $\alpha$ and
  $\beta$ satisfy \eqref{eq:alpha+beta}, for $\beta>0$ and
  $\alpha-E_0>0$ can be made arbitrarily small, and the upper bound on
  the mean index gap is $4n$.
\end{Remark}

\subsection{The action-index gap and the Conley conjecture}
\label{sec:gap-conley}
In this section, we will briefly discuss the relation between Theorem
\ref{thm:gap1} and the Conley conjecture.

As is pointed out in Section \ref{sec:gap}, the hypotheses of Theorem
\ref{thm:gap1} are automatically satisfied for any Hamiltonian $H$
with isolated periodic orbits when $M$ is symplectically aspherical.
In this case the theorem was proved in \cite{GG:gap} in a stronger
form. Namely, the sequence $\nu_i$ can be chosen to be a subsequence
of any quasi-arithmetic sequence of admissible iterations and the
action--index gap \eqref{eq:gap} is non-zero. With these two
amendments -- the second one is particularly important -- the theorem
readily implies the Conley conjecture for symplectically aspherical
manifolds; see~\cite{GG:gap}.

However, to the best of our understanding, once $N<\infty$ and/or
$\lambda_0<\infty$, Theorem \ref{thm:gap1}, even if it were proved in
this stronger form, would not directly imply the Conley conjecture in
the cases where it is expected to hold, e.g., for manifolds with large
$N$. For instance, we have been unable to infer Theorem
\ref{thm:conley} from Theorem \ref{thm:gap1}. Furthermore, even though
the assertion of Theorem \ref{thm:gap2} is at least as strong as the
aspherical counterpart of Theorem \ref{thm:gap1}, the Conley
conjecture fails in general under the hypotheses of Theorem
\ref{thm:gap2} as becomes clear by comparing Examples \ref{exam:
torus-action} and \ref{ex:gap2}.  Yet, Theorems \ref{thm:gap1} and
\ref{thm:gap2} yield a criterion for the existence of infinitely many
periodic orbits; see Corollary \ref{cor:augm-action}.

\section{Periodic orbits in the presence of a symplectically degenerate
  maximum}
\labell{sec:p-o-sdm}

The main objective of this section is to prove Theorem
\ref{thm:sdm}. The proof follows the same line of reasoning as the
argument from \cite{Gi:conley} where the theorem is established for
symplectically aspherical manifolds. The idea of the proof is the
squeezing trick (see \cite{BPS,GG:duke,Ke:sq}): the Hamiltonian $H$ is
bounded from above and below by functions $H_\pm$ such that the
monotone homotopy map for the iterated Hamiltonians $H^{(k)}_\pm$ is
non-zero for the action interval in question. Since this map factors
through the Floer homology of $H^{(k)}$, the latter Floer homology
group is also non-trivial.

The key to realizing this idea is a geometrical characterization of
symplectically non-degenerate maxima, building on the results of
\cite{Gi:conley,GG:gap,Hi}. Namely, we show that a Hamiltonian
diffeomorphism $\varphi_H$ with a symplectically degenerate maximum
$\bx$ can be generated by a Hamiltonian $K$ with a strict local
maximum at $p=x(0)$, for all times, and with arbitrarily small Hessian
at $p$; see Proposition \ref{def:sympl-deg}. Moreover,
$\varphi_H=\varphi_K$ in the universal covering of the group of
Hamiltonian diffeomorphisms, and hence $H$ and $K$ have the same
filtered Floer homology. The problem is thus reduced to proving the
theorem for a Hamiltonian $H=K$ with a genuine maximum at
$\bx=p$. Then the construction of $H_\pm$ and the analysis of the
monotone homotopy map can be carried out by fairly standard methods.

The main difference between the proof of Theorem \ref{thm:sdm} given
below and the argument for symplectically aspherical manifolds is that
in the latter case the Floer homology in question and the monotone
homotopy map can be determined explicitly (see
\cite{Gi:conley,GG:duke}) while in the setting of Theorem
\ref{thm:sdm} such a calculation looks problematic. To circumvent this
difficulty, we isolate a direct summand in the Floer homology groups
of $H_\pm$ which behaves exactly as if the manifold $M$ were
symplectically aspherical, e.g., $\R^{2n}$. As a consequence, the
monotone homotopy map between the corresponding summands for $H_\pm$
is non-zero. This procedure is of rather general nature and
independent interest and is described in detail in the next section.

\subsection{Direct sum decomposition of filtered Floer homology}
\label{sec:direct-sum}
Throughout this section, we assume that $M$ is weakly monotone and
geometrically bounded and rational; see \cite{AL,CGK} for the
definition and a detailed discussion of geometrically bounded
manifolds. Let $U\subset W\subset M$ be open sets with smooth boundary
and compact closure. For the sake of simplicity, we will assume that
the closed domains $\bar{U}$ and $\bar{W}$ are isotopic in $M$, e.g.,
$W$ is a small neighborhood of $\bar{U}$. Consider a Hamiltonian $F$,
which is constant on $M\ssminus U$, and set $C:=F\mid_{M\ssminus
U}$. Let $(a,\,b)$ be an interval such that
\begin{equation}
\label{eq:interval}
a\text{ and }b \text{ are outside }\CS(F)\text{ and }
(a,\,b)\cap (C+\lambda_0\Z)=\emptyset,
\end{equation}
where $\lambda_0$ is the rationality constant. (The set
$C+\lambda_0\Z$ is formed by action values of $F$ on trivial orbits
outside $U$ with arbitrary cappings.)  For instance, if
$\omega\mid_{\pi_2(M)}=0$, the second part of \eqref{eq:interval}
simply asserts that $C\not\in (a,\,b)$.  Note that under these
conditions the Floer homology $\HF^{(a,\,b)}_*(F)$ is defined (see
Remark \ref{rmk:floer-gb}) and that \eqref{eq:interval} implies, in
particular, that $b-a<\lambda_0$.

Assume now that all one-periodic orbits of $F$ with action in
$(a,\,b)$ are non-degenerate.  Denote by $\CF^{(a,\,b)}_*(F;U)$ the
vector space (over $\F$) generated by such orbits with cappings
contained in $U$ or, more precisely, with cappings equivalent to those
contained in $U$. (Since $W\supset U$ is isotopic to $U$, we can
equivalently require the cappings to be contained in $W$.) In
particular, the orbits in question must also be contained in $U$. Let
$\CF^{(a,\,b)}_*(F;M,U)$ be the vector space generated by the
remaining capped one-periodic orbits with action in the interval
$(a,\,b)$. (These orbits, but not their cappings, are also contained
in $U$ by \eqref{eq:interval}.) Thus, we have the direct sum
decomposition
\begin{equation}
\label{eq:direct-sum}
\CF^{(a,\,b)}_*(F)=\CF^{(a,\,b)}_*(F;U)\oplus \CF^{(a,\,b)}_*(F;M,U)
\end{equation}
on the level of vector spaces. Fix now an arbitrary, not necessarily
$F$-regular, almost complex structure $J_0$ compatible with $\omega$.

\begin{Lemma}
\label{lemma:direct-sum}
There exists a constant $\eps(U,W,M)>0$, independent of $F$, such that
whenever $b-a<\eps(U,W,M)$ the decomposition \eqref{eq:direct-sum} is
a direct sum of complexes for any $F$-regular almost complex structure
$J$ sufficiently close to $J_0$ and compatible with $\omega$.  Thus,
we have, in the obvious notation, the decomposition of Floer homology
\begin{equation}
\label{eq:direct-sum-h}
\HF^{(a,\,b)}_*(F)=\HF^{(a,\,b)}_*(F;U)\oplus \HF^{(a,\,b)}_*(F;M,U).
\end{equation}
The direct sum decompositions \eqref{eq:direct-sum} and
\eqref{eq:direct-sum-h} are preserved by the long exact sequence maps,
provided that all action intervals are shorter than $\eps(U,W,M)$, and
by monotone decreasing homotopy maps, as long as the Hamiltonians are
constant on $M\ssminus U$.
\end{Lemma}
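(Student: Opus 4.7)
The plan is to exploit the fact that $F$ is locally constant outside $U$ to control Floer trajectories. Since $X_F\equiv 0$ on $M\ssminus U$, any Floer trajectory $u\colon \R\times S^1\to M$ of $F$ satisfies the plain $J$-holomorphic equation on $u^{-1}(M\ssminus U)$. I would define $\eps(U,W,M)>0$ as (a positive constant slightly smaller than) the infimum of $\omega$-areas of non-constant $J_0$-holomorphic curves that meet $\overline U$ and extend outside $W$; by the monotonicity lemma together with compactness of $\overline W$ and the geometric boundedness of $M$, this infimum is positive, and a uniform positive lower bound persists for all $J$ in a $C^0$-neighborhood of $J_0$ and for its $s$- and $t$-dependent analogues.

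First I would verify that when $b-a<\eps(U,W,M)$, every Floer trajectory $u$ between capped orbits $\bx$ and $\by$ with actions in $(a,b)$ stays in $W$. Both asymptotic orbits lie in $U$ by the assumption $(a,b)\cap(C+\lambda_0\Z)=\emptyset$, since constant orbits outside $U$ can only have actions in $C+\lambda_0\Z$. The energy of $u$ is then bounded by $\mathcal A(\bx)-\mathcal A(\by)<b-a<\eps$. If $u$ exited $W$, the connected portion of $u$ inside $M\ssminus U$ near the exit point would be a non-constant $J$-holomorphic piece touching $\p U$ and reaching $M\ssminus W$, whose symplectic area would be at least $\eps$ by the choice of the constant, contradicting the energy bound.

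Next I would derive the direct sum decomposition of complexes. Given that $u$ is contained in $W$, the capping of $\by$ obtained by concatenating the given capping of $\bx$ with the cylinder $u$---which is the capping entering the Floer differential---lies in $W$, hence is equivalent to one in $U$ because $\overline U$ and $\overline W$ are isotopic in $M$. Thus if $\bx$'s capping is in $U$, so is $\by$'s. Conversely, the capping of $\bx$ equals that of $\by$ minus the class of the cylinder $u$, itself in $W$, so if $\by$'s capping is in $U$ then so is $\bx$'s. Both statements together show that the differential preserves the splitting \eqref{eq:direct-sum}, yielding \eqref{eq:direct-sum-h}. The long exact sequence maps in Section~\ref{sec:Floer} are induced by filtered inclusions and quotients built from the same generators, so they manifestly respect the splitting provided every action interval involved has length less than $\eps$.

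For monotone decreasing homotopy maps between two such Hamiltonians, the continuation family $F^s$ can be taken to be constant on $M\ssminus U$ for every $s$, so $X_{F^s}\equiv 0$ there and the parametrized Floer equation reduces to the $J^s$-holomorphic equation outside $U$; the same monotonicity estimate confines the continuation cylinders to $W$ and preserves the splitting. The principal technical point I would need to handle is the uniformity of $\eps$ across the admissible class of $J$ (and families $J^s_t$): the monotonicity lemma is pointwise in the almost complex structure, so a careful compactness argument on $S^1$ (or $[0,1]\times S^1$ for homotopies) together with the $C^0$-neighborhood of $J_0$ is required to extract a single constant $\eps(U,W,M)$ that works universally.
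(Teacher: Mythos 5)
Your proposal is correct and follows essentially the same strategy as the paper: exploit that $F$ is constant (hence $X_F\equiv 0$) on $M\ssminus U$ so Floer cylinders solve the $J_t$-holomorphic equation there, establish a uniform energy quantum $\eps(U,W,M)$ for holomorphic pieces crossing the shell between $\p\bar U$ and $\p\bar W$, and deduce that for $b-a<\eps$ every Floer (or monotone continuation) trajectory between the relevant capped orbits stays in $W$, so the glued cappings remain equivalent to ones in $U$. The only distinction is that you obtain the lower bound via the monotonicity lemma, whereas the paper invokes the Gromov compactness theorem for holomorphic curves in the shell $Z=\bar W\ssminus U$ with boundary on $\p Z$; these are interchangeable here, and your explicit mention that the converse direction (capping of $\bx$ determined from that of $\by$ and $u$) is also needed is a correct, if implicit in the paper, observation.
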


Note that the constant $\eps(U,W,M)$ also depends on $J_0$ even though
this dependence is suppressed in the notation. However, $\eps(U,W,M)$
is bounded away from zero when $J_0$ varies within a compact
set. Furthermore, as is clear from the proof of the lemma, the
constant $\eps(U,W,M)$ is completely determined by the restriction of
$J_0$ to $\bar{W}\ssminus U$.

The proof of Lemma \ref{lemma:direct-sum} is essentially the standard
argument that a Floer connecting trajectory with small energy
connecting orbits from $\CF^{(a,\,b)}_*(F;U)$ must lie in $W$, based
on the Gromov compactness theorem. Postponing the proof, let us show
how the lemma extends, essentially by continuity, to degenerate
Hamiltonians; cf.\ Remark \ref{rmk:floer-gb}.  Observe first that, as
readily follows from the last assertion of the lemma, the
decomposition \eqref{eq:direct-sum-h} is independent of $J$ and, in
fact, of $J_0$ when $J_0$ varies within a fixed compact set and $b-a$
is sufficiently small.

Let $F$ be an arbitrary Hamiltonian which is constant on $M\ssminus U$
and meets the requirement of \eqref{eq:interval}. Due to the latter
condition, non-degeneracy is a generic requirement on the
Hamiltonian. More precisely, $F$ can be $C^2$-approximated by
Hamiltonians $\tF$ satisfying \eqref{eq:interval} and such that all
one-periodic orbits of $\tF$ with action in $(a,\,b)$ are
non-degenerate.  Fix $J_0$ and set, assuming that $b-a<\eps(U,W,M)$,
$$
\HF^{(a,\,b)}_*(F;U)=\HF^{(a,\,b)}_*(\tF;U)
$$
and
$$
\HF^{(a,\,b)}_*(F;M,U)=\HF^{(a,\,b)}_*(\tF;M,U).
$$
Clearly, these groups are independent of $\tF$ and $J$, when $\tF$ is
$C^2$-close to $F$ and $J$ is close to $J_0$.  As an immediate
consequence of Lemma \ref{lemma:direct-sum}, we obtain

\begin{Proposition}
\label{prop:direct-sum}
There exists a constant $\eps(U,W,M)>0$, independent of $F$, such that
the direct sum decomposition \eqref{eq:direct-sum} holds and is
preserved by the long exact sequence and decreasing monotone homotopy
maps, as long as all action intervals are shorter than $\eps(U,W,M)$
and the Hamiltonians are constant on $M\ssminus U$.
\end{Proposition}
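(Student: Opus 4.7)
My plan is to deduce Proposition \ref{prop:direct-sum} from Lemma \ref{lemma:direct-sum} by a continuity/approximation argument, exploiting the fact that condition \eqref{eq:interval} together with the requirement that $F$ be constant on $M\smallsetminus U$ is a $C^2$-open condition among Hamiltonians. The constant $\eps(U,W,M)$ is inherited directly from the lemma (with $J_0$ allowed to vary over a compact family of $\omega$-compatible almost complex structures, which does not affect the bound).

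First I would verify that the groups $\HF^{(a,b)}_*(F;U)$ and $\HF^{(a,b)}_*(F;M,U)$, defined via a non-degenerate approximation $\tF$ that is still constant on $M\smallsetminus U$ and still satisfies \eqref{eq:interval}, are well-defined, i.e., independent of the choice of $\tF$ and $J$. For this, let $\tF_0$ and $\tF_1$ be two such approximations. They can be joined by a path $\tF_s$ of Hamiltonians constant on $M\smallsetminus U$ and $C^2$-close to $F$, and after a small further perturbation one may assume the endpoints $a$ and $b$ stay outside $\CS(\tF_s)$ for all $s$. A continuation argument of the standard type, carried out with $J_s$ staying in a compact family, yields an isomorphism between $\HF^{(a,b)}_*(\tF_0)$ and $\HF^{(a,b)}_*(\tF_1)$; by the last assertion of Lemma \ref{lemma:direct-sum} (applied in the isospectral case as in Example \ref{ex:isospec}, or piecewise along a partition of $s$), this isomorphism preserves the decomposition \eqref{eq:direct-sum-h}, so each summand is intrinsically attached to $F$.

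The direct sum decomposition of $\HF^{(a,b)}_*(F)$ itself is then automatic: by Remark \ref{rmk:floer-gb} and \eqref{eq:floer-hom}, $\HF^{(a,b)}_*(F)=\HF^{(a,b)}_*(\tF)$ for any sufficiently $C^2$-close non-degenerate $\tF$ of the required form, and this latter group splits by Lemma \ref{lemma:direct-sum} whenever $b-a<\eps(U,W,M)$. Preservation under the long exact sequence associated with a triple $a<b<c$ is handled by choosing one non-degenerate $\tF$ close enough to $F$ that simultaneously $a,b,c\notin\CS(\tF)$ and all capped one-periodic orbits of $\tF$ with action in $(a,c)$ are non-degenerate; Lemma \ref{lemma:direct-sum} then gives the splitting of the long exact sequence for $\tF$, and this splitting descends to $F$ through the identifications used to define its filtered Floer homology.

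The one step that requires care---and which I expect to be the main obstacle---is the preservation under a decreasing monotone homotopy from $F^0$ to $F^1$ (both constant on $M\smallsetminus U$, with constants $C^0\geq C^1$, both satisfying \eqref{eq:interval} for the common interval $(a,b)$). Here one must approximate the \emph{entire} linear homotopy, not just its endpoints, by a homotopy $\tF^s$ of Hamiltonians constant on $M\smallsetminus U$ whose endpoints are non-degenerate and such that the resulting monotone homotopy map is defined at the filtered chain level; this uses that $(a,b)$ stays disjoint from $C(s)+\lambda_0\Z$ along the homotopy since the latter set moves continuously while the interval is shorter than $\lambda_0$. With this in hand, Lemma \ref{lemma:direct-sum} applied to the approximating homotopy gives that the induced map on $\HF^{(a,b)}_*$ respects the splitting, and passing to the limit yields the corresponding statement for the original $F^0$ and $F^1$, completing the proof.
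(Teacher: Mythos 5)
Your overall strategy matches the paper's: deduce the proposition from Lemma \ref{lemma:direct-sum} by $C^2$-approximating the (possibly degenerate) Hamiltonians by non-degenerate ones constant on $M\ssminus U$ satisfying \eqref{eq:interval}, observing that the resulting summands are independent of the choice of approximation and almost complex structure. The paper states this very tersely, treating the proposition as an immediate consequence, while you spell out the well-definedness and the preservation under the long exact sequence and monotone homotopy maps in more detail; this is fine and is the intended route.

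However, one claim in your treatment of the monotone homotopy case is incorrect, and in fact is explicitly contradicted by the Remark immediately following Proposition \ref{prop:direct-sum} in the paper. You assert that the argument ``uses that $(a,b)$ stays disjoint from $C(s)+\lambda_0\Z$ along the homotopy since the latter set moves continuously while the interval is shorter than $\lambda_0$.'' This is false: as $C(s)=F^s\mid_{M\ssminus U}$ decreases, the translates $C(s)+\lambda_0\Z$ can and in general do sweep through $(a,b)$, and the condition \eqref{eq:interval} is only imposed at $s=0$ and $s=1$, not for intermediate $s$. Fortunately this claim is also unnecessary: a monotone decreasing homotopy always induces a filtration-preserving chain map because the action decreases along continuation trajectories, and the energy estimate in the proof of Lemma \ref{lemma:direct-sum} (the connecting trajectory from a capped orbit of $\tF^0$ with capping in $U$ to a capped orbit of $\tF^1$ with action gap less than $\eps(U,W,M)$ must stay in $W$) applies to continuation trajectories exactly as to Floer trajectories, regardless of what the intermediate constants $C(s)$ do. So the decomposition is preserved by the monotone homotopy map without any condition on the intermediate Hamiltonians beyond being constant on $M\ssminus U$. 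With that correction your argument is sound and agrees with the paper's.
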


\begin{Remark}
It is essential that in the last assertion of the proposition and of
the lemma where a monotone decreasing homotopy $F^s$ is considered,
the constant value $C_s=F^s\mid_{M\ssminus U}$ may depend on $s$ and
condition \eqref{eq:interval} is imposed only on the Hamiltonians
$F^0$ and $F^1$, but \emph{not} on all Hamiltonians $F^s$.  In
particular, the set $C_s+\lambda_0\Z$ is \emph{not} required to have
trivial intersection with the interval $(a,\,b)$ for all $s\in
[0,\,1]$, but only for $s=0$ and $s=1$.
\end{Remark}

\begin{proof}[Proof of Lemma \ref{lemma:direct-sum}]

The set $Z=\bar{W}\ssminus U$ is a ``shell'' bounded by
$S_0=\p\bar{U}$ and $S_1=\p\bar{W}$. Let $J$ be an almost complex
structure sufficiently close to $J_0$. Consider $J$-holomorphic curves
$v\colon\Sigma\to Z$ with boundary on $S_0\cup S_1=\p Z$ such that
$v(\Sigma)\cap S_0$ and $v(\Sigma)\cap S_1$ are both non-empty. By the
Gromov compactness theorem, there exists a constant $\eps(U,W,M)>0$
such that
\begin{equation}
    \label{eq:eq:lower-bound}
\int_v\omega> \eps(U,W,M)
\end{equation}
for all $v$ and all $J$ close to $J_0$.

Let $\bx$ and $\by$ be two capped one-periodic orbits of $F$ with
action in $(a,\,b)$, connected by a Floer anti-gradient trajectory
$u$, and such that one of these orbits, say $\bx$, has capping in
$U$. To prove that \eqref{eq:direct-sum} is a direct sum of
complexes, we need to show that the capping of the second orbit
$\by$ is also in $U$, provided that $b-a<\eps(U,W,M)$. To this end, it
suffices to prove that $u$ is contained in $W$, for then the capping
of $\by$ is homotopic to one contained in $U$.

The difference of actions $|\CA_F(\bx)-\CA_F(\by)|$ is equal to the energy
$$
E(u)=\int_{-\infty}^\infty\int_{S^1}\left\|\p_s u\right\|^2\,dt\,ds.
$$
Note that the part of $u$ that is not contained in $U$ is a
holomorphic curve in $M$ with boundary on $S_0=\p\bar{U}$. Let
$\Sigma\subset \R\times S^1$ be the collection of all $(s,t)$ such
that $u(s,t)\in Z$. Without loss of loss of generality, we may assume
that $u$ is transverse to $S_0$ and $S_1$, by perturbing these
hypersurfaces slightly, and thus $\Sigma$ is a closed domain in
$\R\times S^1$ with smooth boundary. Clearly, $v:=u|_\Sigma$ is a
holomorphic curve with values in $Z$ and boundary in $\p Z$. Thus,
\begin{equation}
    \label{eq:lower-bound}
\eps(U)> b-a\geq E(u)\geq \int_\Sigma \left\|\p_s u\right\|^2\,dt\,ds
=\int_\Sigma v.
\end{equation}
Arguing by contradiction, assume that $u$ is not entirely contained in
$W$. Then $v(\Sigma)\cap S_0$ and $v(\Sigma)\cap S_1$ are
non-empty. Hence, the upper bound \eqref{eq:eq:lower-bound} holds,
which is impossible by \eqref{eq:lower-bound}.

The fact that the decomposition \eqref{eq:direct-sum-h} is preserved
by the exact sequence maps is a formal consequence of the definitions
and of the complex decomposition \eqref{eq:direct-sum}.

The proof of the assertion that \eqref{eq:direct-sum-h} is preserved
by the monotone homotopy maps is identical to the above argument. (The
only modification is that, when $u$ is a homotopy connecting
trajectory, we have the inequality $|\CA_F(\bx)-\CA_F(\by)|\geq E(u)$
rather than an equality.)
\end{proof}

Furthermore, the proof of Lemma \ref{lemma:direct-sum} yields

\begin{Proposition}
\label{prop:direct-sum-independence}
The Floer homology $\HF^{(a,\,b)}_*(F;U)$ and the monotone homotopy
maps are independent of the ambient manifold $M$ when $b-a$ is
sufficiently small.
\end{Proposition}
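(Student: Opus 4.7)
The plan is to fix the pair $U\subset W$ together with the Hamiltonian $F$, and to consider two embeddings of $W$ into weakly monotone, geometrically bounded, rational ambient manifolds $M$ and $M'$, in each case extending $F$ by its constant value $C$ outside $U$. My goal is to exhibit a canonical identification between the two groups $\HF^{(a,b)}_*(F;U)$ computed in $M$ and in $M'$, provided $b-a$ is smaller than both $\eps(U,W,M)$ and $\eps(U,W,M')$; an essentially identical argument will handle the monotone homotopy maps.

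First I would identify the generators. By definition, the generators of $\CF^{(a,b)}_*(F;U)$ are capped one-periodic orbits $\bar{x}=(x,u)$ with $x\subset U$ and capping $u$ equivalent to one contained in $U$; two cappings $u,v\subset U$ are identified when $\omega$ and $c_1$ both integrate to zero over the sphere $u\#\bar{v}$. Since this sphere lies in $U$, both integrals are intrinsic to $(U,\omega|_U)$ and do not depend on the ambient manifold. Consequently the underlying graded $\F$-vector spaces computed in $M$ and in $M'$ are canonically identified.

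Next I would arrange the auxiliary data to coincide on $W$. As noted in the paragraph following Lemma \ref{lemma:direct-sum}, the constant $\eps(U,W,M)$ is completely determined by the restriction of $J_0$ to $\bar{W}\ssminus U$. Accordingly, I would pick compatible almost complex structures $J_0$ on $M$ and $J_0'$ on $M'$ whose restrictions to $\bar{W}$ agree under the identification of the two copies of $W$, so that in particular $\eps(U,W,M)=\eps(U,W,M')$. Then I would take $F$-regular perturbations $J$ and $J'$ of $J_0$ and $J_0'$ that still coincide on $\bar{W}$.

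Finally, I would invoke the energy estimate from the proof of Lemma \ref{lemma:direct-sum}. Any Floer trajectory $u$ between two generators of $\CF^{(a,b)}_*(F;U)$ satisfies $E(u)=|\CA_F(\bar{x})-\CA_F(\bar{y})|<\eps$, and so the Gromov compactness argument used there forces $u(\R\times S^1)\subset W$. Since $F$ and the almost complex structure agree on $\bar{W}$ under the two ambient embeddings, such a $u$ is a Floer trajectory in $M$ if and only if it is one in $M'$, and the corresponding moduli spaces are canonically identified. The differentials therefore coincide, as do the maps induced by monotone decreasing homotopies by the same argument, with the inequality $E(u)\leq |\CA_F(\bar{x})-\CA_F(\bar{y})|$ replacing the equality. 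The only subtlety I anticipate is the bookkeeping in the first paragraph verifying that the equivalence relation on in-$U$ cappings is intrinsic to $U$; this is essentially immediate, since any sphere realizing the equivalence automatically lies in $U$, after which the rest is a direct transcription of the proof of Lemma \ref{lemma:direct-sum}.
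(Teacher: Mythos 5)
Your proposal is correct and follows essentially the same route the paper takes, namely reading Proposition \ref{prop:direct-sum-independence} off from the energy estimate in the proof of Lemma \ref{lemma:direct-sum}: the generators of $\CF^{(a,\,b)}_*(F;U)$, the almost complex structure restricted to $\bar W$, and the connecting trajectories (which lie in $W$ once $b-a<\eps(U,W,M)$) are all intrinsic to the collar $\bar W$, so the complex, its homology, and the monotone homotopy maps do not see the ambient manifold. Your added checks — that the capping-equivalence relation inside $U$ and the associated Conley–Zehnder grading are intrinsic to $(U,\omega|_U)$, and that $J,J'$ can be taken to agree on $\bar W$ — are exactly the bookkeeping the paper implicitly relies on.
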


More precisely, this proposition should be read as follows. Let a
closed symplectic manifold $\bar{W}$ with boundary be embedded as a
closed domain into weakly monotone, rational, geometrically bounded
symplectic manifolds $M_1$ and $M_2$. Assume that $F$ is a Hamiltonian
on $\bar{W}$, constant on $\bar{W}\ssminus U$. Thus, $F$ extends
naturally to $M_1$ and to $M_2$ as a function constant outside $U$.
Furthermore, assume that \eqref{eq:interval} is satisfied for both
$M_1$ and $M_2$, and $b-a<\min\{\eps(U,W,M_1),\eps(U,W,M_2)\}$.  Then,
there is a canonical isomorphism between the groups
$\HF^{(a,\,b)}_*(F;U)$ for the two ambient manifolds $M_1$ and
$M_2$. Furthermore, $\eps(U,W,M_1)=\eps(U,W,M_2)$ when the complex
structure on $M_1$ and the complex structure on $M_2$ coincide on
$\bar{W}\ssminus U$.

\begin{Example}
\label{ex:sa}
Assume that $M$ is symplectically aspherical. Then, regardless of the
length of the interval $(a,\,b)$, by the very definition of
$\CF^{(a,\,b)}_*(F;U)$ we have
$\CF^{(a,\,b)}_*(F)=\CF^{(a,\,b)}_*(F;U)$ and
$\HF^{(a,\,b)}_*(F)=\HF^{(a,\,b)}_*(F;U)$ if \eqref{eq:interval} is
satisfied. However, this homology group depends on $M$, unless $b-a$ 
is small.
\end{Example}

\begin{Example}
\label{ex:independence}
Assume that $\bar{W}$ is a Darboux ball embedded in $M$ and $F$ is as
above.  Then, combining Example \ref{ex:sa} with Proposition
\ref{prop:direct-sum-independence}, we conclude that
$\HF^{(a,\,b)}_*(F;U)$, when $b-a$ is sufficiently small, is equal to
the filtered Floer homology of $F$ viewed as a Hamiltonian on
$\R^{2n}$. In particular, when $M$ itself is symplectically
aspherical, we obtain equality between the filtered Floer homology of
$F$ on $M$ and $\R^{2n}$ for small action intervals.  (This
observation provides a shortcut to the calculation of the Floer
homology in \cite[Section 7]{Gi:conley} reducing the problem to the
case of functions on $\R^{2n}$, analyzed in \cite{GG:duke}.) In the
proof of Theorem \ref{thm:sdm}, Propositions \ref{prop:direct-sum} and
\ref{prop:direct-sum-independence} are utilized in a similar fashion.

\end{Example}

\begin{Remark}
    The proof of Lemma \ref{lemma:direct-sum} also shows that
    $\CF^{(a,\,b)}_*(F;U)$ equipped with the usual Floer differential
    is a complex with well-defined homology even when $M$ is not
    geometrically bounded; cf.\ \cite{GG:CMH}.
\end{Remark}

\subsection{Geometrical characterization of symplectically degenerate maxima}
\label{sec:geom-sdm}
In this section, we give a geometrical characterization of
symplectically degenerate maxima, following \cite{Gi:conley,GG:gap}.

Recall that the norm $\|v\|_\Xi$ of a tensor $v$ on a
finite-dimensional vector space $V$ with respect to a basis $\Xi$ is,
by definition, the norm of $v$ with respect to the Euclidean inner
product for which $\Xi$ is an orthonormal basis. For instance, let
$A\colon V\to V$ be a linear map with all eigenvalues equal to zero.
Then $\|A\|_{\Xi}$ can be made arbitrarily small by suitably choosing
$\Xi$. In other words, for any $\sigma>0$, there exists $\Xi$ such
that $\|A\|_{\Xi}<\sigma$. (Indeed, in some basis, $A$ is given by an
upper triangular matrix with zeros on the diagonal; $\Xi$ is then
obtained by appropriately scaling the elements of this basis.)  When
$\xi$ is a coordinate system near $p\in M$, we denote by $\xi_p$ the
natural coordinate basis in $T_pM$ arising from $\xi$.

\begin{Proposition}
\labell{def:sympl-deg}

Assume that a capped one-periodic orbit $\bx$ of $H$ is a
symplectically degenerate maximum. Fix a point $p\in M$, e.g.,
$p=x(0)$.  Then there exists a sequence of contractible loops $\eta_i$
of Hamiltonian diffeomorphisms generated by Hamiltonians $G^i$ such
that $\Phi_{G^i}(p)=\bx$, i.e., $\eta_i$ sends $p$ with trivial
capping to $\bx$, and the following conditions are satisfied:

\begin{itemize}
\item[(K1)] the point $p$ is a strict local maximum of $K_t^i$ for all
$t\in S^1$ and all $i$, where $K^i$ is given by  $H=G^i\# K^i$;
\item[(K2)]
there exist symplectic bases $\Xi^i$ in $T_pM$ such that
$$
\|d^2 (K_t^i)_p\|_{\Xi^i}\to 0\text{ uniformly in $t\in S^1$;}
$$
\item[(K3)] the linearization of the loop $\eta_i^{-1}\circ \eta_j$ at
$p$ is the identity map for all $i$ and $j$ (i.e.,
$d\big((\eta_i^t)^{-1}\circ \eta_j^t\big)_p=I$ for all $t\in S^1$)
and, moreover, the loop $(\eta_i^t)^{-1}\circ \eta_j^t$ is
contractible to $\id$ in the class of loops fixing $p$ and having the
identity linearization at $p$.
\end{itemize}
\end{Proposition}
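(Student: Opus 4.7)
The plan is to first reduce to the case where $\bx$ is the constant orbit at $p$ with trivial capping, and then to construct a sequence of contractible loops $\eta_i$ whose linearizations at $p$ approximate the linearized flow of $H$, so that the resulting compositions $K^i$ have flow tangent to the identity at $p$ to increasing precision. The smallness of the Hessians and property (K3) will follow quickly; the principal obstacle is enforcing the strict local maximum condition (K1).

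By Proposition \ref{prop:loop1}, choose a contractible loop generated by a Hamiltonian $G^0$ with $\Phi_{G^0}(p) = \bx$, and set $K^0$ via $H = G^0 \# K^0$. Using (LF3) and (MI6), the constant orbit $p$ with trivial capping is again a symplectically degenerate maximum of $K^0$, so after renaming we may assume that $\bx$ itself is this constant orbit of $H$. Since $\Delta_H(p)=0$ and $\HF_{2n}(H,p)\neq 0$, property (LF5) rules out weak non-degeneracy, and every eigenvalue of $d(\varphi_H^1)_p$ equals $1$. The central object is then the path $A(t):=d(\varphi_H^t)_p$ in $\Sp(T_pM)$, running from $I$ to a unipotent $U$. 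Its closure to a loop by any path from $U$ back to $I$ inside the unipotent subgroup has Maslov index $\Delta_H(p)=0$ by (MI8), hence is null-homotopic in $\Sp(T_pM)$. One therefore produces a sequence of smooth null-homotopic loops $\beta_i^t$ in $\Sp(T_pM)$ with $\beta_i^t=A(t)$ on $[0,\,1-1/i]$ and $\beta_i^t=I$ on a neighborhood of $\{0,1\}$. Using a Darboux chart at $p$, each $\beta_i$ is realized as the linearization at $p$ of a loop of germs of Hamiltonian diffeomorphisms fixing $p$; by Proposition \ref{prop:loop2}, the vanishing Maslov index ensures that this local loop extends to a contractible global loop $\eta_i$, generated by some $G^i$, with $\Phi_{G^i}(p)=p$ equipped with trivial capping.

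Defining $K^i$ by $H=G^i\# K^i$, the identity $\varphi^t_{K^i}=(\eta_i^t)^{-1}\circ\varphi^t_H$ yields $d(\varphi^t_{K^i})_p = \beta_i(t)^{-1}A(t)$, which tends to $I$ uniformly in $t$. Consequently $d(K_t^i)_p=0$ and the Hessian $d^2(K_t^i)_p$ has arbitrarily small operator norm in any fixed symplectic basis on $T_pM$; a subsequent diagonal rescaling in such a basis then produces $\Xi^i$ realizing (K2). Property (K3) is immediate from Remark \ref{rmk:loop-ext}: by construction the loops $\eta_i$ and $\eta_j$ have identical linearizations at $p$ up to asymptotically trivial corrections, so $\eta_i^{-1}\circ\eta_j$ has identity linearization at $p$, and the contraction can be arranged to lie inside loops of the required class.

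The delicate point is (K1): once $d^2(K_t^i)_p$ is small, one cannot invoke second-order information to force a strict local maximum, and the argument must instead use the persistence of $\HF_{2n}(K^i,p)\neq 0$ guaranteed by (LF3). Following the \textbf{Hingston-style} analysis of \cite{Hi,Gi:conley,GG:gap}, at a totally degenerate fixed point carrying non-trivial top local Floer homology, the generating Hamiltonian must admit $p$ as a (not necessarily strict) local maximum time-slice-wise after an appropriate $C^2$-small autonomous modification that preserves (K2) and (K3); strictness is then enforced by a further perturbation localized away from $p$. This \emph{strict maximum upgrade} is the main technical obstacle and genuinely uses the full SDM condition rather than merely the total degeneracy of $\bx$.
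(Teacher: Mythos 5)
Your plan is to reduce to a constant orbit (which matches the paper) and then construct the loops $\eta_i$ directly from the linearized flow. The paper, by contrast, cites \cite[Section 5]{Gi:conley} for the existence of \emph{local} loops satisfying (K1)--(K3) near $p$; its new content is the observation that these loops have zero Maslov index, which follows from (K3), (MI7) and the fact that $\Delta_{K^i}(p)\to 0$ and $\mu_{G^i}(p)\in\Z$, whereupon Proposition \ref{prop:loop2} and Remark \ref{rmk:loop-ext} give the global contractible extensions. Your attempt to bypass the cited construction contains a concrete error and a real gap.

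The error is in the claim that $d(\varphi^t_{K^i})_p=\beta_i(t)^{-1}A(t)$ tends to $I$ uniformly in $t$. At $t=1$ you have $\beta_i(1)=I$ and $A(1)=U$, so $d(\varphi^1_{K^i})_p=U$ for every $i$ — as it must, since $\varphi^1_{K^i}=\varphi^1_H$. So the linearized flow of $K^i$ is not uniformly close to $I$, and the direct conclusion that $\|d^2(K^i_t)_p\|$ is small in a fixed basis fails. (In fact, near $t=1$ the linearized flow traverses from nearly $I$ to $U$ on a shrinking time interval, which makes $d^2(K^i_t)_p$ \emph{large} there; one can try to salvage this by choosing the bases $\Xi^i$ carefully, but the argument as written does not show this, and it is not enough to rescale after the fact — the rescaling must be compatible with the constraint $d(\varphi^1_{K^i})_p=U$.)

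The gap is (K1), which you correctly identify as the crux but do not prove. The sentence ``the generating Hamiltonian must admit $p$ as a local maximum time-slice-wise after an appropriate $C^2$-small autonomous modification\dots; strictness is then enforced by a further perturbation localized away from $p$'' is a restatement of the desired conclusion, not an argument. In particular, a perturbation localized \emph{away from} $p$ cannot turn a non-maximum into a strict local maximum at $p$; and no mechanism is offered linking $\HF_{2n}(K^i,p)\neq 0$ to the maximum property. This is exactly the nonlinear, non-perturbative content of the construction in \cite[Section 5]{Gi:conley} that the paper invokes and that your proposal leaves unaddressed. Finally, a minor point: (MI8) as stated applies to actual loops; for the path $A(t)$ with $A(1)=U\neq I$ one must say more carefully why any closure in the unipotent cone has Maslov index $0$ (this is a standard fact about the mean index of totally degenerate paths, but it is not literally (MI8)).
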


\begin{Remark}
As is easy to see, this proposition gives, in fact, a necessary and
sufficient condition for $\bx$ to be a symplectically degenerate
maximum. Moreover, requiring (K1) and (K2) is already sufficient, and
thus (K3) is a formal consequence of these two conditions;
\cite{GG:gap}.
\end{Remark}

\begin{proof}
By Proposition \ref{prop:loop1}, we may assume without loss of
generality that $\bx$ is already a constant orbit $p$ of $H$ with
trivial capping. Then, the existence of local loops $\eta_i$ generated
by Hamiltonians $G_i$ defined on neighborhoods of $p$ is established
in \cite[Section 5]{Gi:conley}.  These loops have zero Maslov
index. To see this, first note that by (K3) all loops $\eta_i$ have
the same Maslov index. Furthermore, by (MI7),
$$
2\mu_{G^i}(p)+\Delta_{K^i}(p)=\Delta_H(p)=0.
$$
Here, $\mu_{G^i}(p)\in\Z$ and $\Delta_{K^i}(p)\to 0$ as $i\to\infty$
by (K2).  Thus, $\mu_{G^i}(p)=0$. It follows now from Proposition
\ref{prop:loop2} and Remark \ref{rmk:loop-ext} that the local loops
$\eta_i$ extend to global loops with required properties.
\end{proof}

\subsection{Proof of Theorem \ref{thm:sdm}}
\label{sec:prf-sdm}
By Proposition \ref{def:sympl-deg}, it is sufficient to prove the
theorem for the Hamiltonian $K^1$ in place of $H$ and the constant
orbit $p$ of $K^1$ with trivial capping in place of $\bx$.  Thus,
keeping the notation $H$ for the Hamiltonian $K^1$, we may assume
throughout the proof that
\begin{itemize}
\item the point $p$ is a strict local maximum of $H_t$ for all
$t\in S^1$ and
\item $d(\eta_i^t)_p=I$ for all $t\in S^1$.
\end{itemize}
Furthermore, without loss of generality, we may assume that $H\geq 0$
and $c=H_t(p)$ is independent of $t$.

Our goal is to construct functions $H_\pm$ such that $H_+\geq H\geq
H_-$ and the monotone homotopy map
\[
\Psi\colon  \HF_{2n+1}^{(kc+\delta_k,\,kc+\eps)}\big(H_+^{(k)}\big)\to
    \HF_{2n+1}^{(kc+\delta_k,\,kc+\eps)}\big(H_-^{(k)}\big)
\]
is non-zero, provided that $k$ is large enough; see Fig.\ 1. Then,
since this map factors through
$\HF_{2n+1}^{(kc+\delta_k,\,kc+\eps)}\big(H^{(k)}\big)$, the latter
group is also non-trivial.
\begin{figure}[htbp]
\begin{center}
\input{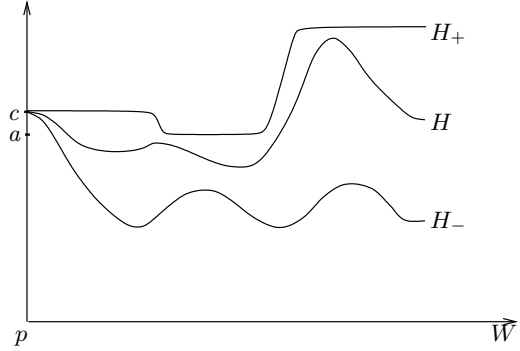}
\caption{The functions $H$ and $H_\pm$}
\end{center}
\end{figure}

\subsubsection{The functions $H_\pm$.}
\label{sec:H+}

Fix a system of canonical coordinates on small Darboux balls $U\Subset
W$ centered at $p$ and let the function $\rho$ on $W$ be the square of
the distance to $p$ with respect to this coordinate system. The
neighborhood $W$ is required to be so small that $c=H(p)$ is a strict
global maximum of $H$ on $W$ and $W$ is contained in a larger Darboux
ball.  Fix also a compatible with $\omega$ almost complex structure
$J_0$ on $M$ coinciding with the standard complex structure on a
neighborhood of $\bar{W}$. In this case, as was pointed out in Section
\ref{sec:direct-sum}, $\eps(U,W,M)=\eps(U,W,\R^{2n})$.  The parameter
$\eps>0$ from the assertion of the theorem is required to satisfy
\begin{equation}
\label{eq:eps}
\eps<\eps(U,W,M)=\eps(U,W,\R^{2n}).
\end{equation}

Pick four balls centered at $p$ in $U$:
$$
B_{r_-}\subset B_{r}\subset B_R \subset B_{R_+} \Subset U.
$$

Let $H_+$ be a function of $\rho$, also treated as a function on $U$,
with the following properties (see Fig.\ 1):

\begin{itemize}

\item $H_+\geq H$,

\item $H_+|_{B_{r_-}}\equiv c=H(p)$,

\item on the shell $B_{r}\ssminus B_{r_-}$ the function $H_+$ is
monotone decreasing,

\item $H_+\equiv a$ on the shell $B_R\ssminus B_{r}$,

\item $H_+$ is monotone increasing on the shell $B_{R_+}\ssminus B_R$,

\item on $U\ssminus B_{R_+}$, the function $H_+$ is constant and equal to
      $\max H_+$.

\end{itemize}

Furthermore, we extend $H_+$ to $M$ by setting it to be constant and
equal to $\max H_+$ on the complement of $U$. The constant $\max H_+$
is chosen so that $H\leq H_+$ on $M$ and $\max H_+>c+\eps$.

The precise description of $H_+$ and the choice of the parameters in
its construction are not explicitly used in this proof. It is
essential though that the function $H_+$ is defined exactly as in
\cite[Section 7]{Gi:conley}.  At this stage, let us fix $H_+$.

Assume that $k$, depending on $H_+$, is sufficiently large and
$\delta_k>0$, depending on $H_+$ and $k$, is sufficiently small. Fix
$k$ and $\delta_k$.

Then the Hamiltonian $H_-\geq 0$, depending on $H^+$ and $k$, is
constructed as follows.  Recall that there exist
\begin{itemize}
\item a loop $\eta^t=\eta^t_i$, $t\in S^1$, of Hamiltonian
diffeomorphisms fixing $p$ and
\item a system of canonical coordinates $\xi=\xi^i$ on a small
neighborhood $V\subset U$ of $p$
\end{itemize}
such that the Hamiltonian $K=K^i$ generating the flow
$(\eta^t)^{-1}\circ\varphi^t_H$ has strict local maximum at $p$ and
$\max_t\|d^2 (K_t)_p\|_{\xi_p}\to 0$ as $i\to\infty$.  Moreover, the
loop $\eta$ has identity linearization at $p$, i.e., $d(\eta^t)_p=I$
for all $t\in S^1$, and is contractible to $\id$ in the class of such
loops. Let $\eta_s$ be a homotopy from $\eta$ to $\id$ such that
$d(\eta^t_s)_p\equiv I$ and let $G_t^s$ be the one-periodic
Hamiltonian generating $\eta^t_s$ and normalized by $G_t^s(p)\equiv
0$.  The condition $d(\eta^t_s)_p=I$ is equivalent to
$d^2(G_t^s)_p=0$.  We normalize $K$ by requiring that $K_t(p)\equiv c$
or, equivalently, by $H=G\# K$.

When $i$ is sufficiently large and hence $k\max_t\|d^2
(K_t)_p\|_{\xi_p}$ is sufficiently small, there exists a ``bump
function'' $F$ (with respect to the coordinate system $\xi$) on a
neighborhood $V\subset B_{r_-}$ of $p$ such that

\begin{itemize}
    \item $k\|d^2 F_p\|_{\xi_p}$ is sufficiently small,
    \item $F\leq K$ and $F(p)=c=K(p)$ is the absolute maximum of $F$,
    \item $F^s := G^s\# F\leq H_+$ for all $s$.
\end{itemize}
Note that here we do not require $F$ to be supported in $V$, but only
to be constant and equal to $\min F$ outside $V$.  By Example
\ref{ex:isospec}, $F^s$ is an isospectral homotopy beginning with
$$
H_-:=G^0\#F\leq G^0\#K=H
$$
and ending with $F^1=F$. Throughout this homotopy, $F^s\leq H_+$.

Furthermore, the homotopy $(F^s)^{(k)}$ from $H_-^{(k)}$ to $F^{(k)}$
is also isospectral and $(F^s)^{(k)}\leq H_+^{(k)}$ for all $s$. (Note
in this connection that in general an iteration of an isospectral
homotopy need not be isospectral. However, an iteration of a homotopy
of the form $G^s\# K$, where all $G^s$ generate loops of Hamiltonian
diffeomorphisms and are normalized, is automatically isospectral.)

\subsubsection{The Floer homology of $H_\pm$ and the monotone homotopy map}
When the manifold $M$ is symplectically aspherical, the Floer homology
groups of the functions $H_+^{(k)}\geq F^{(k)}$, for the intervals in
question, and the monotone homotopy map are determined in
\cite{Gi:conley}. In fact, it suffices to carry out the calculation
for $M=\R^{2n}$, which is done in \cite{GG:duke}, for then, by Example
\ref{ex:independence}, the result holds for any symplectically
aspherical manifold whenever $\eps$ is small. When $M=\R^{2n}$ and
$\eps<\eps(U,W,\R^{2n})$, it is shown that the map
$$
\F\cong\HF_{2n+1}^{(kc+\delta_k,\,kc+\eps)}\big(H_+^{(k)}\big)
\stackrel{\cong}{\to}
\HF_{2n+1}^{(kc+\delta_k,\,kc+\eps)}\big(F^{(k)}\big)\cong\F
$$
is an isomorphism. (At this point we use the requirement that $k\|d^2
F_p\|_{\xi_p}$ is sufficiently small.)

Thus, returning to the setting of Theorem \ref{thm:sdm},
we have the isomorphism
$$
\F\cong\HF_{2n+1}^{(kc+\delta_k,\,kc+\eps)}\big(H_+^{(k)};U\big)
\stackrel{\cong}{\to}
\HF_{2n+1}^{(kc+\delta_k,\,kc+\eps)}\big(F^{(k)};U\big)\cong\F,
$$
by \eqref{eq:eps} and Proposition
\ref{prop:direct-sum-independence}. Since $\eps<\eps(U,W,M)$ and hence
these groups enter the filtered Floer homology of $H_+^{(k)}$ and
$F^{(k)}$ as direct summands, we conclude that the map
$$
\HF_{2n+1}^{(kc+\delta_k,\,kc+\eps)}\big(H_+^{(k)}\big)
\stackrel{\neq 0}{\longrightarrow}
\HF_{2n+1}^{(kc+\delta_k,\,kc+\eps)}\big(F^{(k)}\big)
$$
is non-zero.

Consider the diagram
$$
\xymatrix{
{\HF_{2n+1}^{(kc+\eps,\,kc+\delta_k)}\big(H_+^{(k)}\big)}
 \ar[d]_\Psi\ar[rd]^{\neq 0} &\\
{\HF_{2n+1}^{(kc+\eps,\,kc+\delta_k)}\big(H_-^{(k)}\big)} \ar[r]^{\cong} &
{\HF_{2n+1}^{(kc+\eps,\,kc+\delta_k)}\big(F^{(k)}\big)}
}
$$
where the horizontal isomorphism is induced by the isospectral
homotopy $(F^s)^{(k)}$ from $H_-^{(k)}$ to $F^{(k)}$ and the remaining
two arrows are monotone homotopy maps. Since $(F^s)^{(k)}\leq
H_+^{(k)}$ for all $s$, the diagram commutes; see Section
\ref{sec:hom} and, in particular, Example \ref{ex:isospec} and the
diagram \eqref{eq:diag-iso}.  As we have just shown, the diagonal
arrow is a non-zero map, and hence so is the map $\Psi$. This
concludes the proof of the theorem.

\begin{Remark}
  Note that this argument, reducing the calculation to the case of
  $M=\R^{2n}$, also provides a simple ending of the original proof
  from \cite{Gi:conley}.
\end{Remark}

\section{The Ljusternik--Schnirelman theory}
\label{sec:LS}
Our goal of this section is to prove Theorems \ref{thm:gap2} and
\ref{thm:CPn}.  The proofs differs significantly from the proofs of
other theorems in this paper and rely on the machinery of action
selectors and the Hamiltonian Ljusternik--Schnirelman theory.

\subsection{The classical Ljusternik--Schnirelman theory: critical value selectors}
To set the stage for an overview of the theory of Hamiltonian action
selectors and for the proof of Theorem \ref{thm:gap2}, let us briefly
recall the definition and basic properties of classical critical value
selectors in the Ljusternik--Schnirelman theory.

Let $M$ be a closed, $m$-dimensional manifold and $f\in
C^\infty(M)$. To $u\in H_*(M)$, we associate the \emph{critical value
selector} by the formula
$$
\sls_u(f)= \inf\{ a\mid u\in \im(i^a)\}=\inf\{ a\mid j^a(u)=0\},
$$
where $i^a\colon H_*(\{f\leq a\})\to H_*(M)$ and $j^a\colon H_*(M)\to
H_*(M,\{f\leq a\})$ are the natural ``inclusion'' and ``quotient''
maps. By definition, $\sls_0(f)=-\infty$.  As is well known,
$\sls_u(f)$ has the following properties:
\begin{itemize}

\item $\sls_u(\const)=\const$ (normalization) and 
$$
\sls_1(f)=\min f\leq \sls_u (f)\leq max f=\sls_{[M]}(f);
$$
\item $\sls_u(f)$ is Lipschitz in $f$ with respect to the
$C^0$-topology (continuity);

\item $\sls_{u\cap w}(f+g)\leq \sls_u(f)+\sls_w(g)$ (the triangle inequality)
and $\sls_{\alpha u}(f)=\sls_{u}(f)$ for any non-zero $\alpha\in \F$;

\item $\sls_u(f)$ is a critical value of $f$ (criticality or the
minimax principle);

\item $\sls_{u\cap w}(f)\leq \sls_u(f)$ and, moreover, if $w\neq [M]$
and the critical points of $f$ are isolated, we have strict inequality
\begin{equation}
\label{eq:LS}
\sls_{u\cap w}(f)<\sls_u(f).
\end{equation}
\end{itemize}
The proofs of these well-known facts can be easily extracted from any
standard treatment of the Ljusternik--Schnirelman theory.

As an immediate consequence of \eqref{eq:LS}, we obtain the classical
result of Ljusternik and Schnirelman asserting that the number of
distinct critical values of $f$ is no smaller than $\cl(M)+1$,
whenever the critical points of $f$ are isolated; see, e.g., \cite{HZ}
and references therein. Here, $\cl(M)$ stands for the cup-length of
$M$ over $\F$, i.e., the maximal number $k$ of elements
$u_1,\ldots,u_k$ in $H_{*<m}(M)$ such that $u_1\cap\ldots \cap u_k\neq
0$. For this reason, we will refer to \eqref{eq:LS} as the
\emph{Ljusternik--Schnirelman inequality}.

\begin{Remark} 
\label{rmk:LS-ref}
The Ljusternik--Schnirelman inequality \eqref{eq:LS} can be refined as
follows.  Assume that $\sls_{u\cap w}(f)=\sls_u(f)$ and let $\Sigma$
be the set of critical points of $f$ on the level
$\{f=\sls_u(f)\}$. Denote by $w^*\in H^*(M)$ the Poincar\'e dual of
$w$. Then the restriction of $w^*$ to any neighborhood of $\Sigma$ is
non-zero. In particular, the image of $w^*$ in the Alexander--Spanier
cohomology of $\Sigma$ is also non-zero. When $w=[M]$ this implies
that $\Sigma\neq\emptyset$ (criticality). If $w\in H_{*<m}(M)$, we
infer that for the strict inequality \eqref{eq:LS} to fail, the
critical set $\Sigma$ must contain a cycle of degree $m-\deg w>0$
comprised of critical points, and hence $\Sigma$ must be infinite.
\end{Remark}

\subsection{The Hamiltonian Ljusternik--Schnirelman theory: action selectors}
The theory of Hamiltonian \emph{action selectors} or \emph{spectral
invariants}, as they are usually referred to, can be viewed as the
theory of critical value selectors applied to the action functional
$\CA_H$ in place of a smooth function $f$. A complete Floer--theoretic
version of such a theory for symplectically aspherical manifolds was
developed in \cite{schwarz} although the first versions of the theory
go back to \cite{HZ,Vi:gen}. (Applications of action selectors extend
far beyond the scope of our discussion here; see, e.g.,
\cite{FS,EP1,EP2,Gi:we,Gu,HZ,MS,schwarz,Vi:gen} and references
therein.) In this section, we mainly follow \cite{Oh}, where the
actions selectors are studied for Hamiltonians on closed, weakly
monotone manifolds.

\subsubsection{Basic properties of the action selectors}
Let $M$ be such a manifold and let $H$ be a Hamiltonian on $M$.  To a
non-zero element $u\in \HQ_*(M)\cong\HF_*(H)$, we associate the
\emph{action selector} $\s_u$ by a formula virtually identical to the
definition of~$\sls$:
$$
\s_u(H)= \inf\{ a\in \R\ssminus \CS(H)\mid u\in \im(i^a)\}
=\inf\{ a\in \R\ssminus \CS(H)\mid j^a(u)=0\},
$$
where now $i^a\colon \HF_*^{(-\infty,\,a)}(H)\to \HF_*(H)$ and
$j^a\colon \HF_*(H)\to\HF_*^{(a,\, \infty)}(H)$ are the natural
``inclusion'' and ``quotient'' maps; see Section \ref{sec:Floer}.
Then $\s_u(H)>-\infty$. (This is a non-trivial statement unless $M$ is
assumed to be rational; see \cite{Oh} for a proof.)  As in the
finite-dimensional case, by definition, $\s_0(H)=-\infty$. In what
follows, for the sake of convenience, we will always assume that $u$
is homogeneous. (This assumption is not really necessary: $u$ could be
a sum of homology classes of different degrees.)  When $H$ is
non-degenerate,
$$
\s_u(H)=\inf_{[\sigma]=u}\CA_H(\sigma),
$$
where we set $\CA_H(\sigma)=\max\{\CA_H(\bx)\mid \alpha_{\bx} \neq 0\}$
for $\sigma=\sum\alpha_{\bx} \bx\in\CF_*(H)$.  

The action selectors $\s_u$ have the following properties:

\begin{itemize}

\item[(AS1)] Normalization: $\s_u(H)=\sls_u(H)$ if $u\in H_*(M)$ and
  $H$ is autonomous and $C^2$-small.

\item[(AS2)] Continuity: $\s_u$ is Lipschitz in $H$ in the $C^0$-topology.

\item[(AS3)] Monotonicity: $\s_u(H)\geq \s_u(K)$, whenever $H\geq K$ pointwise.

\item[(AS4)] Hamiltonian shift: $\s_u(H+a(t))=\s_u(H)+\int_0^1a(t)\,dt$, where 
      $a\colon S^1\to\R$.

\item[(AS5)] Symplectic invariance:
$\s_{\varphi_*(u)}(H)=\s_u(\varphi^* H)$ for any symplectomorphism
$\varphi$, and hence $\s_u(H)=\s_u(\varphi^* H)$ if $\varphi$ is
isotopic to $\id$ in the group of symplectomorphisms.

\item[(AS6)] Homotopy invariance: $\s_u(H)=\s_u(K)$, when
$\varphi_H=\varphi_K$ in the universal covering of the group of
Hamiltonian diffeomorphisms, both $H$ and $K$ are normalized, and $M$
is rational.

\item[(AS7)] Quantum shift: $\s_{\alpha u}(H)=\s_u(H)+I_\omega(\alpha)$, 
             where $\alpha\in \Lambda$.

 \item[(AS8)] Valuation inequality:
             $\s_{u+w}(H)\leq\max\{\s_u(H),\,\s_w(H)\}$ and, moreover,
             the inequality is strict if $\s_u(H)\neq \s_w(H)$.

\item[(AS9)] Triangle inequality: $\s_{u*w}(H\# K)\leq\s_u(H)+\s_w(K)$.

\item[(AS10)] Spectrality: $\s_u(H)\in \CS(H)$, provided that $M$ is
rational or otherwise $H$ is non-degenerate. More specifically, there
exists a capped one-periodic orbit $\bx$ of $H$ such that
$\s_u(H)=\CA_H(\bx)$ and $n\leq |\Delta_H(\bx)-\deg u|\leq
2n$. Furthermore, $\HF_{\deg u}(H,\bx)\neq 0$ if $x$ is isolated. In
particular, $\MUCZ(\bx)+n=\deg u$ when $H$ is non-degenerate.

\end{itemize}

We refer the reader to \cite{Oh} for the proofs of (AS1)-(AS9); see
also \cite{En,EP2,schwarz,MS}.  Some of these properties (e.g., (AS7)
and (AS8)) follow immediately from the definitions, while some (e.g.,
(AS9)) are quite non-trivial to establish. The spectrality property
(AS10) is proved in \cite{Oh} in the rational case and in \cite{U1}
for non-degenerate Hamiltonians.

\subsubsection{The Hamiltonian Ljusternik--Schnirelman inequality}
The additional property of the action selector that is needed for the
proof of Theorem \ref{thm:gap2} is an analogue of the
Ljusternik--Schnirelman inequality \eqref{eq:LS}. Set
$$
\HQ_*^-(M)=H_{*<2n}(M)\otimes\Lambda.
$$
Thus, $\HQ_*(M)=\HQ_*^-(M)\oplus [M]\Lambda$, but, in general,
$\HQ_*^-(M)$ is not a subalgebra of $\HQ_*(M)$. 

\begin{Proposition}[Hamiltonian Ljusternik--Schnirelman inequality]
\label{prop:LSF}
For any $H$ and $u$ and $w$ in $\HQ_*(M)$ we have 
$\s_{u*w}(H)\leq \s_u(H)+I_\omega(w)$. 
Moreover,
\begin{equation}
\label{eq:LSF}
\s_{u*w}(H)<\s_u(H)+ I_\omega(w),
\end{equation}
provided that $w\in \HQ_*^-(M)$ and the periodic orbits of $H$ are
isolated.
\end{Proposition}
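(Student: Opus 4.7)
My plan has two clearly separated parts corresponding to the non-strict and strict halves of the inequality.

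For the non-strict inequality, I would apply the triangle inequality (AS9) with $K\equiv 0$. Since $H\#0=H$, this gives $\s_{u*w}(H)\leq\s_u(H)+\s_w(0)$, so it suffices to show $\s_w(0)\leq I_\omega(w)$. Decomposing $w=\sum_A w_A e^A$ with $w_A\in H_*(M)$ and iterating the valuation inequality (AS8) together with the quantum shift (AS7), one obtains $\s_w(0)\leq\max_A\bigl(\s_{w_A}(0)+I_\omega(A)\bigr)$. By the normalization (AS1), each $\s_{w_A}(0)=\sls_{w_A}(0)=0$, since the only critical value of the zero function is $0$. Hence $\s_w(0)\leq\max_A I_\omega(A)=I_\omega(w)$.

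For the strict inequality under the additional hypotheses that $w\in\HQ_*^-(M)$ and that all periodic orbits of $H$ are isolated, I would argue by contradiction, first reducing the problem to a purely ``classical'' class. Write $w=\alpha w_0+w''$, where $\alpha\in\Lambda$ satisfies $I_\omega(\alpha)=I_\omega(w)$, where $w_0\in H_{*<2n}(M)$ is nonzero (this is possible since $w\in\HQ_*^-(M)$), and where $I_\omega(w'')<I_\omega(w)$. Using $\Lambda$-linearity of $*$ and then combining (AS7) and (AS8), one reduces to proving $\s_{u*w_0}(H)<\s_u(H)$ for $w_0\in H_{*<2n}(M)$; indeed, if this strict inequality holds, then
\begin{equation*}
\s_{u*w}(H)\leq\max\bigl\{\s_{u*w_0}(H)+I_\omega(\alpha),\,\s_{u*w''}(H)\bigr\}<\s_u(H)+I_\omega(w),
\end{equation*}
where the second term on the right is estimated using the already-proven non-strict inequality.

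It remains to prove $\s_{u*w_0}(H)<\s_u(H)$ when $w_0\in H_{*<2n}(M)$ and the orbits of $H$ are isolated. The plan is to perturb $H$ in a controlled way near the finitely many capped orbits realizing $\s_u(H)$. By (AS10), $\s_u(H)$ is attained at finitely many capped one-periodic orbits $\bx_1,\ldots,\bx_r$ of $H$; since orbits are isolated, the base points $x_j(0)$ form a finite subset of $M$. Choose a smooth non-negative function $\phi$ on $M$ supported in a small neighborhood of $\{x_j(0)\}$, with $\phi(x_j(0))>0$, and consider $H_s=H-s\phi$ for small $s>0$. By monotonicity (AS3), $\s_u(H_s)\leq\s_u(H)$ and $\s_{u*w_0}(H_s)\leq\s_{u*w_0}(H)$. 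The key claim is that $\s_u(H_s)<\s_u(H)$ strictly for all sufficiently small $s>0$: this is analogous to the classical fact that shaving $f$ near its maximal critical set strictly lowers $\sls_u(f)$, and should follow from spectrality (AS10) combined with continuity (AS2) and a chain-level tracking of the minimax class after a $C^2$-small non-degenerate approximation. Assuming momentarily $\s_{u*w_0}(H)=\s_u(H)$, a parallel argument using the weaker classical inequality $\sls_{[M]\cap w_0}(\phi)<\sls_{[M]}(\phi)$ applied to $\phi$ (valid since $w_0$ has strictly lower degree than $[M]$) must force $\s_{u*w_0}(H_s)$ to drop by strictly less than $\s_u(H_s)$ does, contradicting the assumed equality as $s\to 0^+$.

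The main obstacle is making the last paragraph quantitative and genuinely Floer-theoretic. The two monotone drops must be compared against each other with a precise chain-level handle on which capped orbits of the perturbed Hamiltonian $H_s$ carry the selectors $\s_u(H_s)$ and $\s_{u*w_0}(H_s)$: the hypothesis that $w_0\in H_{*<2n}(M)$ enters via (AS10), which constrains the mean indices of the orbit realizing $\s_{u*w_0}$ to differ from those realizing $\s_u$, so that the support of $\phi$ near $\{x_j(0)\}$ does not affect the former selector by the full amount $s\max\phi$. Carrying this through rigorously, rather than via the clean Schwarz-style argument available in the aspherical case, is the technically delicate point; an alternative, more direct approach would implement the Hamiltonian Ljusternik--Schnirelman inequality at the chain level using the pair-of-pants moduli space to produce the required strict decrease.
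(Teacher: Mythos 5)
Your proposal handles the non-strict part correctly and in essentially the same way as the paper: combine the triangle inequality (AS9) with $K\equiv 0$, together with (AS1), (AS7), (AS8) to bound $\s_w(0)\leq I_\omega(w)$. (A small point of care: since the Novikov sum $w=\sum_A w_A e^A$ may be infinite, "iterating" (AS8) needs a limiting argument, but this is the same kind of issue the paper elides and is not a serious obstacle.) Your reduction of the strict inequality to the case $w_0\in H_{*<2n}(M)$ via (AS7)/(AS8) is also a reasonable simplification, though as written it implicitly assumes the top-valuation piece factors as $\alpha w_0$ with a single $w_0$; in general several $A$ can share the maximal $I_\omega(A)$ and one must peel them off term by term.

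The gap you yourself flag in the strict inequality is a genuine one, and your perturbation $H_s=H-s\phi$ is not the right move. Subtracting a function does not interact with the Hamiltonian composition $\#$ in a way that lets you invoke the triangle inequality (AS9), so you are left trying to compare "rates of drop'' of $\s_u(H_s)$ versus $\s_{u*w_0}(H_s)$ as $s\to 0^+$, which has no clean Floer-theoretic handle. The key device in the paper's proof is instead to compose $H$ with the flow of an auxiliary Hamiltonian $f=a(t)g$, where $g\leq 0$ is a $C^2$-small function vanishing identically on small neighborhoods of the finitely many fixed points of $\varphi_H$ and strictly negative elsewhere, and $a$ is a bump in the time variable. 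Because $f$ vanishes near the fixed points and is small, $H\# f$ has literally the same periodic orbits and action spectrum as $H$ (and the same for the linear interpolation $H\#(sf)$), so by continuity (AS2) one has the exact equality $\s_{u*w}(H)=\s_{u*w}(H\# f)$. The strictness then comes entirely from the classical Ljusternik--Schnirelman estimate applied to the small perturbation alone: $\sls_v(g)<0$ for all $v\in H_{*<2n}(M)$ because $g$ is strictly negative off a finite set, so $\s_w(f)=\s_w(\epsilon g)<I_\omega(w)$. Feeding this into (AS9) gives $\s_{u*w}(H)\leq\s_u(H)+\s_w(f)<\s_u(H)+I_\omega(w)$, with no chain-level tracking of minimax cycles required. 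This composition trick is the missing ingredient in your argument.
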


The authors are not aware of any reference to a proof of this
proposition although the result bears a similarity to arguments from,
e.g., \cite{Fl,Ho,LO,schwarz2}. (For instance, for rational manifolds,
the result appears to be implicitly contained in \cite{schwarz2} and
the symplectically aspherical case is treated in \cite{schwarz}.)
Since Proposition \ref{prop:LSF} is particularly important for what
follows, we outline its proof, which is reminiscent of the arguments
in \cite{Gu}.

\begin{proof}
  Observe that $\s_u(0)=I_\omega(u)$ as is clear from (AS1) and
  (AS7). Then the non-strict inequality follows immediately from the
  triangle inequality (AS9) with $K\equiv 0$.  The proof of the strict
  inequality \eqref{eq:LSF} is based on several preliminary
  observations.

Consider first an arbitrary class $w=\sum_A w_A e^A\in \HQ_*(M)$. We
claim that
\begin{equation}
\label{eq:LSF-p1}
\s_w(g)\leq \sup_A\{\sls_{w_A}(g)\}+ I_\omega(w)
\end{equation}
when $g$ is a sufficiently $C^2$-small function on $M$. 
To see this, note that, by (AS1) and (AS8),
$$
\s_w(g)\leq \sup_A\{\sls_{w_A}(g)+I_\omega(A)\},
$$
and \eqref{eq:LSF-p1} readily follows when $M$ is rational.  To prove
\eqref{eq:LSF-p1} in the general case, note that the set $\{
I_\omega(A)\mid w_A\neq 0\}$ is discrete in $\R$ regardless of whether
$M$ is rational or not. Let us order this set as
$\lambda_1>\lambda_2>\ldots$, where $I_\omega(w)=\lambda_1$.  All
critical values of $g$, and hence, in particular, $\sls_{w_A}(g)$, lie
in the range $[\min g,\,\max g]$. Thus, when $\max g-\min
g<(\lambda_1-\lambda_2)/2$, the right hand side of the latter
inequality does not exceed $\sup\{\sls_{w_A}(g)\}+\lambda_1$, which
proves \eqref{eq:LSF-p1}.

Fix now a finite collection of points $p_1,\ldots,p_m$ in $M$ and
assume that $w\in \HQ_*^-(M)$. Let $g\leq 0$ be a $C^2$-small function on
$M$ which is equal to zero on small neighborhoods of these points and
otherwise strictly negative. Then, clearly, $\sls_v(g)<-\delta_g<0$
for all $v\in H_{*<2n}(M)$ and some $\delta_g>0$ depending of
$g$. Thus, by \eqref{eq:LSF-p1}, we have
\begin{equation}
\label{eq:LSF-p2}
\s_w(g)\leq I_\omega(w)-\delta_g<I_\omega(w).
\end{equation}
Furthermore, let $a\colon S^1\to\R$ be a non-negative, $C^2$-small
function, equal to zero outside a small neighborhood of $0\in
S^1$. Set $f=ag$. The Hamiltonian flow of $f$ is a
reparametrization of the flow of $g$ through time $\eps=\int_0^1
a(t)\,dt$. Hence, when $g$ and $a$ are sufficiently $C^2$-small, $\eps
g$ and $f$ have the same periodic orbits (the critical points of $g$)
and the same action spectrum. Clearly, this is also true for every
function in the linear family $f_s=(1-s)\eps g+s f$, with $s\in
[0,\,1]$, connecting $\eps g$ and $f$. By the continuity of $\s_w$ (see (AS2)), we conclude
that $\s_w(\eps g)=\s_w(f)$ and \eqref{eq:LSF-p2} turns into
\begin{equation}
\label{eq:LSF-p3}
\s_w(f)\leq I_\omega(w)-\delta_{\eps g}<I_\omega(w).
\end{equation}

Now we are in a position to finish the proof. Let $p_1,\ldots,p_m$ be
the fixed points of $\varphi_H$. Consider the Hamiltonian $H\#
f$. Assume that the neighborhoods of the fixed points on which
$g\equiv 0$ are sufficiently small, the interval on which $a>0$ is
sufficiently short, and also $g$ and $a$ are $C^2$-small. Then it is
not hard to see that $H\# f$ has the same periodic orbits as $H$ and,
moreover, $\CS(H\# f)=\CS(H)$.  As above, the same holds for all
Hamiltonians $H\# (sf)$ with $s\in [0,\,1]$. Again, by the continuity
property (AS2), we have $\s_{u*w}(H)=\s_{u*w}(H\# f)$. Finally, using
the triangle inequality (AS9) and \eqref{eq:LSF-p3}, we obtain
$$
\s_{u*w}(H)=\s_{u*w}(H\# f)\leq \s_u(H)+\s_w(f)<\s_u(H)+I_\omega(w).
$$
\end{proof}

\subsubsection{Digression: the Arnold conjecture}
Proposition \ref{prop:LSF} provides a natural framework for a
discussion of the degenerate case of the Arnold conjecture asserting
that the number of one-periodic orbits of $H$ is bounded from below by
$\cl(M)+1$.  (The conjecture is still open for a general rational,
weakly monotone manifold $M$.) For instance, the proposition
immediately implies the degenerate case of the conjecture when $M$ is
symplectically aspherical, originally proved in \cite{F:c-l,Ho}; see
also \cite{HZ}.  Although in the presence of non-trivial cappings
Proposition \ref{prop:LSF} does not address one of the main
difficulties -- showing that the orbits found are geometrically
distinct -- it allows one to easily reprove some of the known results.
Here, for the sake of completeness, we illustrate this utilization of
Proposition \ref{prop:LSF} by several examples.

The general line of reasoning is absolutely standard. Namely, assume
that we have $m+1$ non-zero homology classes $[M]=u_0,u_1,\ldots, u_m$
in $\HQ_*(M)$ such that $u_{k}=u_{k-1}* w_{k}$ with $w_{k}\in
\HQ_*^-(M)$ and $I_\omega(w_k)\leq 0$. Then, by Proposition
\ref{prop:LSF}, $\s_{u_{k}}(H)<\s_{u_{k-1}}(H)$ for any Hamiltonian
$H$ on $M$ with isolated fixed points. (Every weakly monotone $M$
admits $\cl(M)+1$ such classes.) Thus, when the spectrality property
(AS10) holds, we have $m+1$ capped one-periodic orbits
$\bx_0,\ldots,\bx_m$ such that
\begin{equation}
\label{eq:AC}
\CA_H(\bx_{k})<\CA_H(\bx_{k-1})\text{ and } |\Delta_H(\bx_{k})-\deg
u_k|\leq 2n.
\end{equation}
In general, the orbits $x_k$ need not be distinct, and showing that at
least some of the orbits are requires additional assumptions on $M$
and presents the main difficulty in proving the Arnold conjecture even
when $M$ is rational and weakly monotone. Below, obtaining a lower
bound on the number of geometrically distinct one-periodic orbits, we
closely follow the original sources, with some simplifications
afforded by Proposition \ref{prop:LSF} and the usage of the mean
index, and restrict our attention only to the examples where this
lower bound is nearly obvious.

Among the results that readily follow from Proposition \ref{prop:LSF}
are, for instance:

\begin{itemize}
\item[(i)] The Arnold conjecture for rational manifolds, when
$\|H\|<\lambda_0$, \cite{schwarz2}.

\item[(ii)] The Arnold conjecture for negatively monotone manifolds
with $N\geq n$,~\cite{LO}.

\item[(iii)] The Arnold conjecture for $\CP^n$, \cite{Fo,FW,Fl}, or,
more generally, the existence of at least $n+1$ distinct orbits when
$M=\CP^n\times P$, where $P$ is symplectically aspherical,
\cite{Oh1,schwarz2}, and the existence of $\gcd(n_1+1,\ldots,n_l+1)$
distinct orbits on a monotone product $M=\CP^{n_1}\times\ldots
\times\CP^{n_l}$,~\cite{Fl}.

\item[(iv)] The existence of at least $2N/(2n-d)$ distinct
one-periodic orbits when $M$ is monotone with $N>0$ and there is a
non-nilpotent element $w\in \HQ_{d<2n}(M)$, \cite{schwarz2}.
\end{itemize}

\begin{proof}[A brief note on the proofs] In all four cases, the
  choice of the elements $u_k$ is the most straightforward. For
  instance, in (i) and in (ii), we can take $w_k\in H_{*<2n}(M)$ such
  that $w_1\cap\ldots\cap w_m=1$ and $m=\cl(M)$.  Then, in (i), the
  assertion that the orbits $x_k$ are geometrically distinct follows
  immediately from the well-known bound $0\leq\s_{[M]}(H)-\s_1(H)\leq
  \|H\|$; see, e.g., \cite{Oh,schwarz}. In (ii), this is a consequence
  of the assumption that $M$ is negative monotone and $N\geq n$, and
  of the bound $\Delta_H(\bx_j)-\Delta_H(\bx_i)\geq d_i-d_j-2n>-2n$,
  where $j>i$ and $d_k=\deg u_k$, which, in turn, follows from the
  second part of \eqref{eq:AC}.

  In (iii), with $M=\CP^n\times P$, we have $u_k=w^k\otimes[P]$, where
  $w$ is the generator of $H_{2n-2}(\CP^n)$, and $m=n$; cf.\ Example
  \ref{ex:cpn}. The fact that all orbits are geometrically distinct
  can be easily seen from, e.g., Lemma \ref{lemma:gap2}. The case of
  the monotone product of projective spaces is treated in a similar
  way, keeping track of both the action and the mean
  index. Alternatively, (iii) follows immediately from (iv).

  In (iv), the argument is slightly different; \cite{Fo,schwarz2}.
  Set $u_k=w^k$ with $k=0,1,\ldots$ and consider the infinite
  collection of the orbits $\bx_k$. Note that $\deg u_k=dk-2n(k-1)$,
  when $k\geq 1$.  Thus, by \eqref{eq:AC}, this collection contains
  $L/(2n-d)$ orbits with mean index in the range $[-L,\,0]$, up to an
  error independent of $L\gg 0$.  Assume that there are exactly $l$
  geometrically distinct orbits within the collection $\{\bx_k\}$.
  Each of these orbits contributes at most $L/2N$ capped orbits to the
  list of orbits with mean index in $[-L,\,0]$, again up to a bounded
  error. Thus, we can totally have at most $lL/2N$ orbits with action
  in $[-L,\,0]$. Dividing by $L\to\infty$, we conclude that $l\geq
  2N/(2n-d)$.
\end{proof}

\begin{Remark}
  All of the above arguments concern, in fact, the action or/and mean
  index spectrum of $H$.  For instance, in (i) and in (iii) with
  $M=\CP^n\times P$, we showed that $\CS(H)$ contains at least
  $\cl(M)+1$ distinct points in the interval $[\s_1(H),\,
  \s_{[M]}(H)]$ of length not exceeding $\lambda_0$; cf.\ Theorems
  \ref{thm:gap2} and \ref{thm:gap21}. The proof of (ii) is based on a
  lower bound on the mean index gap
  $\Delta_H(\bx_j)-\Delta_H(\bx_i)$. The assertion (iv) relies on an
  estimate of the asymptotic density of the mean index
  spectrum. Namely, we showed that the asymptotic density (defined in
  the obvious way) is bounded from below by $1/(2n-d)$. On the other
  hand, every periodic orbit generates, via recapping, a set of orbits
  with density $1/2N$. Hence, the number of distinct orbits is at
  least $2N/(2n-d)$.
\end{Remark}

\subsection{The \emph{a priori} bound on the action-index gap}
The goal of this section is to prove Theorem \ref{thm:gap2}. We
establish the following slightly more general result:

\begin{Theorem}
\labell{thm:gap21} 
Let $(M^{2n},\omega)$ be a closed, weakly monotone symplectic manifold
such that there exist $u\in \HQ_*^-(M)$ and $w\in \HQ_*^-(M)$ and an
element $\alpha\in\Lambda$ satisfying the homological condition of
Theorem \ref{thm:gap2}:
$$
[M]=(\alpha u)*w.
$$
Let $H$ be a Hamiltonian on $M$ with isolated one-periodic
orbits. Assume in addition that $I_\omega(u)\geq 0$ and one of the
following conditions holds:
\begin{itemize}

\item[(a)] $M$ is rational and
$I_\omega(\alpha)+I_\omega(w)\leq\lambda_0$;

\item[(b)] $0<|2n-\deg u|<2N$ and $H$ is non-degenerate.
\end{itemize}
Then the flow of $H$ has two geometrically distinct (capped)
one-periodic orbits $\bx$ and $\by$ such that
\begin{equation}
\label{eq:action}
0\leq -I_\omega(u)< \A_{H}(\bx)-\A_{H}(\by)<I_\omega(\alpha)+I_\omega(w)
\end{equation}
and
\begin{equation}
\label{eq:index}
\big|\Delta_{H}(\bx)-n\big|\leq n
\text{ and }
n\leq\big|\Delta_{H}(\by)-\deg u\big|\leq 2n.
\end{equation}
\end{Theorem}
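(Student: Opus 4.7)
The plan is to apply the machinery of action selectors from Section~\ref{sec:LS} to the classes $[M]$ and $u$: the spectrality axiom (AS10) will produce the capped orbits $\bx$ and $\by$; the quantum shift axiom (AS7) will strip off the Novikov coefficient $\alpha$; and Proposition~\ref{prop:LSF} (the Hamiltonian Ljusternik--Schnirelman inequality) applied twice will pin down the action gap on both sides.

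First I would apply (AS10) to $[M]$: under hypothesis (a) or (b) this produces a capped one-periodic orbit $\bx$ with $\CA_H(\bx)=\s_{[M]}(H)$ and $\HF_{2n}(H,\bx)\neq 0$; combining the latter with (LF5) forces $\Delta_H(\bx)\in[0,2n]$, which is exactly $|\Delta_H(\bx)-n|\leq n$. Applied to $u$, (AS10) produces $\by$ with $\CA_H(\by)=\s_u(H)$ and $n\leq|\Delta_H(\by)-\deg u|\leq 2n$, establishing \eqref{eq:index}. For the upper action bound, the strict inequality \eqref{eq:LSF} applied to the factorization $[M]=(\alpha u)*w$ (valid because $w\in\HQ_*^-(M)$), combined with (AS7), yields $\s_{[M]}(H)<\s_u(H)+I_\omega(\alpha)+I_\omega(w)$. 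For the lower bound, \eqref{eq:LSF} applied to the trivial factorization $u=[M]*u$ (valid because $u\in\HQ_*^-(M)$) yields $\s_u(H)<\s_{[M]}(H)+I_\omega(u)$, i.e.\ $\CA_H(\bx)-\CA_H(\by)>-I_\omega(u)$; together with the hypothesis $I_\omega(u)\geq 0$ this closes the chain in \eqref{eq:action}.

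Finally, I would verify that $\bx$ and $\by$ are geometrically distinct. Otherwise they would be two cappings of a common orbit with recapping parameter $A\in\Gamma$, so the action gap would lie in $I_\omega(\Gamma)$ and the mean index gap in $I_{c_1}(\Gamma)$. In case (a), $I_\omega(\Gamma)=\lambda_0\Z$, whereas \eqref{eq:action} traps the strictly positive gap inside $(0,\lambda_0)$, which contains no non-zero multiple of $\lambda_0$. In case (b), $H$ is non-degenerate, so the last clause of (AS10) upgrades the index data to $\MUCZ(\bx)=n$ and $\MUCZ(\by)=\deg u - n$; a common underlying orbit would force the integer $2n-\deg u$ to lie in $2N\Z$, contradicting $0<|2n-\deg u|<2N$.

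The main obstacle is the strict form of the Ljusternik--Schnirelman inequality \eqref{eq:LSF}; once Proposition~\ref{prop:LSF} is in hand, everything above is a clean assembly of the selector axioms. A subsidiary point that needs care is the compatibility of the sign conventions for $I_\omega$ with the rationality constant in case (a): one must confirm that the open interval carved out by \eqref{eq:action} actually sits inside $(-\lambda_0,\lambda_0)$, which combines the hypothesis $I_\omega(\alpha)+I_\omega(w)\leq\lambda_0$ with $I_\omega(u)\geq 0$ to genuinely exclude non-trivial recappings.
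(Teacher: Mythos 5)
Your proposal follows essentially the same route as the paper: the action bounds come from two applications of the Hamiltonian Ljusternik--Schnirelman inequality (Proposition~\ref{prop:LSF}) — once to the trivial factorization $u=[M]*u$ and once to $[M]=(\alpha u)*w$ combined with the quantum shift (AS7) — exactly as in the paper's Lemma~\ref{lemma:gap2}; the orbits are produced by spectrality (AS10); and the geometric distinctness is argued by trapping the action gap in $(0,\lambda_0)$ in case~(a) and the Conley--Zehnder gap in $(0,2N)$ in case~(b). The one place where you go beyond what the paper makes explicit is the first index bound: you correctly observe that (AS10) applied to $[M]$ gives $\HF_{2n}(H,\bx)\neq 0$ and then invoke (LF5) to pin $\Delta_H(\bx)\in[0,2n]$, which is precisely $|\Delta_H(\bx)-n|\leq n$; the paper attributes the pair of bounds \eqref{eq:index} directly to (AS10), and your derivation is the careful way to obtain the $\bx$-bound from it. One small caution: in closing the left end of the chain you write that the hypothesis $I_\omega(u)\geq 0$ yields $-I_\omega(u)\geq 0$, which as written has the sign reversed; the inequality $-I_\omega(u)\geq 0$ requires $I_\omega(u)\leq 0$. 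This is inherited from the paper's own phrasing (its proof also invokes "$I_\omega(u)\geq 0$" at the same point, and in the case of Theorem~\ref{thm:gap2} the class $u$ lies in $H_{*<2n}(M)$ so $I_\omega(u)=0$ and the issue is moot), but a self-contained write-up should either assume $I_\omega(u)\leq 0$ or note explicitly that the two stated inequalities force $I_\omega(u)=0$.
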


\begin{Remark}
\label{rmk:gap21}
When the Hamiltonian $H$ is non-degenerate, the mean index bounds
\eqref{eq:index} can be replaced by
\begin{equation}
\label{eq:index2}
\MUCZ(\bx)=n
\text{ and }
\MUCZ(\by)+n=\deg u .
\end{equation}
Furthermore, if $H$ is weakly non-degenerate, all inequalities in
\eqref{eq:index} are strict.
\end{Remark}

To obtain Theorem \ref{thm:gap2} as a consequence of Theorem
\ref{thm:gap21}, it suffices to notice that $I_\omega\mid_{H_*(M)}=0$.

\begin{proof}[Proof of Theorem \ref{thm:gap21}]
The first step in the argument is the following simple observation:

\begin{Lemma}
\label{lemma:gap2}
  Assume that quantum homology classes $u\in \HQ_*^-(M)$ and $w\in
  \HQ_*^-(M)$ and $\alpha\in\Lambda$ meet the homological condition of
  Theorems \ref{thm:gap2} and \ref{thm:gap21}. Let $H$ be a
  Hamiltonian on $M$ with isolated one-periodic orbits. Then
\begin{equation}
\label{eq:sel1}
-I_\omega(u)<\s_{[M]}(H)-\s_u(H)<I_\omega(\alpha)+I_\omega(w).
\end{equation}
\end{Lemma}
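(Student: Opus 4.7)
The plan is to derive the two inequalities in \eqref{eq:sel1} by applying the Hamiltonian Ljusternik--Schnirelman inequality (Proposition \ref{prop:LSF}) twice, using the strict form on each side, and the quantum shift property (AS7) to absorb $\alpha$.

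First I would establish the upper bound. Since $[M]=(\alpha u)*w$ and $w\in \HQ_*^-(M)$, and the one-periodic orbits of $H$ are isolated, Proposition \ref{prop:LSF} with first factor $\alpha u$ and second factor $w$ gives the strict inequality
$$
\s_{[M]}(H)=\s_{(\alpha u)*w}(H)<\s_{\alpha u}(H)+I_\omega(w).
$$
Applying the quantum shift property (AS7) to rewrite $\s_{\alpha u}(H)=\s_u(H)+I_\omega(\alpha)$ yields
$$
\s_{[M]}(H)-\s_u(H)<I_\omega(\alpha)+I_\omega(w),
$$
which is the right-hand inequality of \eqref{eq:sel1}.

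Next, for the lower bound, I would exploit the fact that $[M]$ is the unit in $\HQ_*(M)$, so that $[M]*u=u$. Since $u\in \HQ_*^-(M)$, a second application of the strict Ljusternik--Schnirelman inequality (this time with first factor $[M]$ and second factor $u$) gives
$$
\s_u(H)=\s_{[M]*u}(H)<\s_{[M]}(H)+I_\omega(u),
$$
i.e., $-I_\omega(u)<\s_{[M]}(H)-\s_u(H)$. Combining the two strict inequalities yields \eqref{eq:sel1}.

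The only genuine input beyond formal properties of action selectors is the strict form of Proposition \ref{prop:LSF}, whose verification is the real work (and is already supplied in the paper). So I do not anticipate any obstacle in this lemma itself; it is an almost formal consequence of the Hamiltonian Ljusternik--Schnirelman inequality applied to the two ``factorizations'' $[M]=(\alpha u)*w$ and $u=[M]*u$, together with the quantum shift formula for~$\alpha$.
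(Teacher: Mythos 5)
Your proof is correct and follows essentially the same route as the paper's: one application of the strict Ljusternik--Schnirelman inequality (Proposition \ref{prop:LSF}) to $[M]=(\alpha u)*w$ combined with the quantum shift (AS7) for the upper bound, and a second application to $u=[M]*u$ for the lower bound. The only cosmetic difference is that you write $u=[M]*u$ where the paper writes $u=u*[M]$ before implicitly commuting the factors, so your phrasing lines up a bit more transparently with the hypothesis of Proposition \ref{prop:LSF}.
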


\begin{proof} Since $u=u*[M]$, where $u\in \HQ_*^-(M)$, and since the
  periodic orbits of $H$ are isolated, we have, by Proposition
  \ref{prop:LSF},
$$
\s_u(H)<\s_{[M]}(H)+I_\omega(u),
$$
which implies the first inequality in \eqref{eq:sel1}. Likewise, using
again Proposition \ref{prop:LSF}, we infer from the homological
condition that
$$
\s_{[M]}(H)<\s_{\alpha u}(H)+I_\omega(w)=\s_u(H)+I_\omega(\alpha)+I_\omega(w).
$$
Hence, the second inequality in \eqref{eq:sel1} also holds.
\end{proof}

Returning to the proof of the theorem, recall that the spectrality
condition, (AS10), holds due to the assumption that $M$ is rational
or, otherwise, $H$ is non-degenerate.  Hence, there exist capped
one-periodic orbits $\bx$ and $\by$ satisfying \eqref{eq:index} and
such that
$$
\CA_H(\bx)=\s_{[M]}(H)\text{ and } \CA_H(\by)=\s_u(H).
$$
Moreover, if $H$ is non-degenerate, we have \eqref{eq:index2}. Now the
action bound \eqref{eq:action} follows from \eqref{eq:sel1} and the
condition that $I_\omega(u)\geq 0$.

It remains to show that $x$ and $y$ are geometrically distinct, i.e.,
$\bx\neq \by$ and $\by$ is not a non-trivial recapping of $\bx$.  The
fact that $\bx\neq \by$ follows immediately from the first inequality
in \eqref{eq:action}: $\CA_H(\bx)-\CA_H(\by)>0$.

Moreover, if $\bx=\by\#A$, we necessarily have $I_\omega(A)>0$. Thus
$\CA_H(\bx)-\CA_H(\by)=\lambda_0k$, where $k$ is a positive integer,
when $M$ is rational.  In case (a), this is impossible since
$\CA_H(\bx)-\CA_H(\by)<\lambda_0$ by the second inequality of
\eqref{eq:action}.

Next, let us consider case (b), when $H$ is non-degenerate but $M$ is
not necessarily rational. Then \eqref{eq:index2} holds, and
\begin{equation}
\label{eq:degree}
0<|\MUCZ(\bx)-\MUCZ(\by)|<2N.
\end{equation}
Thus, if $\bx=\by\#A$, we have
$0<|I_{c_1}(A)|=|\MUCZ(\bx)-\MUCZ(\by)|<2N$, which is impossible.
\end{proof}

\begin{Remark}
  Note that in the last argument the non-degeneracy assumption on $H$
  is used twice: the first time to ensure the spectrality condition
  and the second time to prove, using \eqref{eq:degree}, that $\bx$ is
  not a recapping of $\by$. Hence, even if spectrality for degenerate
  Hamiltonian were established, the proof would still rely on the
  non-degeneracy assumption.
\end{Remark}

\begin{Remark}
  As is easy to see, Theorem \ref{thm:gap21} can be further
  generalized as follows. Let $M$ be closed and weakly monotone.
  Assume that there exists $v\in \HQ_*(M)$ and three classes $u$, $w$
  and $w'$ in $\HQ_*^-(M)$, and also $\alpha\in\Lambda$, such that
$$
u=v* w'\text{ and } v=(\alpha u)* w
$$
and $I_\omega(w')\leq 0$. Let the Hamiltonian $H$ have isolated
one-periodic orbits.  Assume furthermore that $M$ is rational and
$I_\omega(\alpha)+I_\omega(w)<\lambda_0$ or $0<|\deg v-\deg u|<2N$ and
$H$ is non-degenerate.  Then $H$ has two geometrically distinct
(capped) one-periodic orbits $\bx$ and $\by$ such that the inequality
\eqref{eq:action} holds with $u$ replaced by $w'$, and
\eqref{eq:index} holds with the first bound replaced by
$\big|\Delta_{H}(\bx)-\deg v \big|\leq 2n$. When $H$ is
non-degenerate, we have $\MUCZ(\bx)+n=\deg v$ and $\MUCZ(\by)+n=\deg
u$; cf.\ Remark \ref{rmk:gap21}. Setting $v=[M]$ and $w'=u$, we obtain
Theorem \ref{thm:gap21}.
\end{Remark}

\subsection{Proof of Theorem \ref{thm:CPn}}
By Example \ref{ex:cpn}, $\HQ_*(\CP^n)=\F$, when the degree $*$ is
even, and zero otherwise.  Let $u_{n-1}$ be a generator of
$\HQ_{2n-2}(\CP^n)$. Then $u_{n-i}:=u_{n-1}^i$ is a generator of
$\HQ_{2(n-i)}(\CP^n)$. In particular, $u_{n}=[M]$ and $u_0=[\pt]$ and
$u_{-1}=q^{-1}[M]$, etc., where $q=e^A$ with $I_{\omega}(A)=\lambda_0$
and $I_{c_1}(A)=2(n+1)$.

Consider first a Hamiltonian $H$ on $\CP^n$ with exactly $n+1$
distinct one-periodic orbits. Then, by Proposition \ref{prop:LSF},
there is a one-to-one correspondence between its capped one-periodic
orbits $\bx$ and the generators $u_k$ given by
$$
\CA_H(\bx)=\s_{u_i}(H).
$$
Set $\mu_H(\bx):=i$. For instance, if $\bx$ is non-degenerate,
$\mu_H(\bx)=\MUCZ(H,\bx)$.  Thus, the collection of capped orbits is
strictly ordered by the index $\mu_H$.  Furthermore, as readily
follows from Proposition \ref{prop:LSF} and Example \ref{ex:cpn}, this
collection is also strictly ordered by the action and these two
orderings coincide:
\begin{equation}
\label{eq:2orderings}
\mu_H(\bx)>\mu_H(\by) \text{ iff } \CA_H(\bx)>\CA_H(\by).
\end{equation}
This observation is in fact the key point of the proof. Note also
that, again by (AS10), we have
\begin{equation}
\label{eq:cpn-pf1}
n\leq |\Delta_H(\bx)-\mu_H(\bx)|\leq 2n.
\end{equation}

Assume now that $H$ is as in the statement of the theorem: for every
$k$, the flow of $H$ has exactly $n+1$ distinct $k$-periodic
orbits. (Then every such periodic orbit is necessarily the $k$th
iteration of a one-periodic orbit of $H$.)  The above considerations
apply to $H^{(k)}$ for all $k>0$. Recall also that
$$
\CA_{H^{(k)}}(\bx^k)=k\CA_H(\bx)\text{ and } 
\Delta_{H^{(k)}}(\bx^k)=k\Delta_H(\bx);
$$
by, e.g., (MI1). Hence, as follows from \eqref{eq:cpn-pf1},
\begin{equation}
\label{eq:cpn-pf2}
\lim_{k\to\infty}\frac{\mu_{H^{(k)}}\big(\bx^k\big)}{k}=\Delta_H(\bx).
\end{equation}
Fix two capped one-periodic orbits $\bx$ and $\by$ of $H$ with, say,
$$
\CA_H(\bx)>\CA_H(\by)\text{ and }\mu(\bx)>\mu(\by).
$$
Our goal is to show that $\tA_H(x)=\tA_H(y)$, where
$\tA_H=\CA_H-\lambda \Delta_H$. Without loss of generality, we may
assume that $\CA_H(\by)>0$.

Let $l$ be a positive integer such that 
$$
\CA_{H}\big(\by\#(lA)\big)=\CA_H(\by)+l\lambda_0>\CA_H(\bx).
$$
The sequence $\CA_{H^{(k)}}\big(\by^k\#
(lA)\big)=k\CA_H(\by)+l\lambda_0$ grows slower, as a function of $k$,
than the sequence $\CA_{H^{(k)}}\big(\bx^k\big)=k\CA_H(\bx)$.  Hence,
there exists a unique integer $k_l>1$ such that
$$
\CA_{H^{(k_l)}}\big(\by^{k_l}\# (lA)\big)\leq\CA_{H^{(k_l)}}\big(\bx^{k_l}\big),
\text{ but }
\CA_{H^{(k_l-1)}}\big(\by^{k_l-1}\# (lA)\big)> \CA_{H^{(k_l-1)}}\big(\bx^{k_l-1}\big).
$$

A straightforward calculation yields that
$$
\frac{k_l-1}{l}<\frac{\lambda_0}{\CA_H(\bx)-\CA_H(\by)}\leq \frac{k_l}{l},
$$
and hence
\begin{equation}
\label{eq:cpn-pf3}
\lim_{l\to\infty}\frac{k_l}{l}=\frac{\lambda_0}{\CA_H(\bx)-\CA_H(\by)}.
\end{equation}

Due to \eqref{eq:2orderings}, $k_l$ can be equivalently characterized by the
requirement
$$
\mu_{H^{(k_l)}}\big(\by^{k_l}\# (lA)\big)\leq \mu_{H^{(k_l)}}\big(\bx^{k_l}\big),
\text{ but }
\mu_{H^{(k_l-1)}}\big(\by^{k_l-1}\# (lA)\big)> \mu_{H^{(k_l-1)}}\big(\bx^{k_l-1}\big).
$$
Again, by a straightforward calculation, we have
$$
\frac{\mu_{H^{(k_l-1)}}\big(\by^{k_l-1}\big)}{l}+2(n+1)
> \frac{\mu_{H^{(k_l-1)}}\big(\bx^{k_l-1}\big)}{l}
$$
and
$$ 
\frac{\mu_{H^{(k_l)}}\big(\by^{k_l}\big)}{l}+2(n+1)
\leq \frac{\mu_{H^{(k_l)}}\big(\bx^{k_l}\big)}{l}.
$$
Letting $l\to\infty$ and taking into account \eqref{eq:cpn-pf2}
and \eqref{eq:cpn-pf3}, we obtain
$$
\frac{\Delta_H(\by)\lambda_0}{\CA_H(\bx)-\CA_H(\by)}+2(n+1)
=\frac{\Delta_H(\bx)\lambda_0}{\CA_H(\bx)-\CA_H(\by)}
$$
Since $\lambda_0=2(n+1)\lambda$, this readily implies the required equality
$$
\CA_H(\bx)-\lambda\Delta_H(\bx)=\CA_H(\by)-\lambda\Delta_H(\by),
$$
concluding the proof of the theorem.

\end{document}